\icmltitlerunning{K-Subspaces Method for Subspace Clustering}
\newcommand{\be}{\begin{equation}}
\newcommand{\ee}{\end{equation}}
\newcommand{\st}{\mbox{s.t.}}
\newcommand{\argmin}{\mathop{\mathrm{arg\,min}}}
\newcommand{\argmax}{\mathop{\mathrm{arg\,max}}}
\newtheorem{lemma}{Lemma}
\newtheorem{thm}{Theorem}
\newtheorem{coro}{Corollary}
\newtheorem{assumption}{Assumption}
\newtheorem{defi}{Definition}
\newtheorem{prop}{Proposition}
\def\bA{\bm{A}}
\def\bB{\bm{B}}
\def\bC{\bm{C}}
\def\bE{\bm{E}}
\def\bG{\bm{G}}
\def\bH{\bm{H}}
\def\bHs{\bm{H}^*}
\def\bI{\bm{I}}
\def\bO{\bm{O}}
\def\bP{\bm{P}}
\def\bQ{\bm{Q}}
\def\bU{\bm{U}}
\def\bV{\bm{V}}
\def\bX{\bm{X}}
\def\b0{\bm{0}}
\def\ba{\bm{a}}
\def\bba{\bar{\ba}}
\def\bc{\bm{c}}
\def\be{\bm{e}}
\def\bg{\bm{g}}
\def\bh{\bm{h}}
\def\bo{\bm{1}}
\def\bu{\bm{u}}
\def\bv{\bm{v}}
\def\tbv{\tilde{\bv}}
\def\bw{\bm{w}}
\def\bx{\bm{x}}
\def\by{\bm{y}}
\def\bz{\bm{z}}
\def\mC{\mathcal{C}}
\def\mM{\mathcal{M}}
\def\mN{\mathcal{N}}
\def\mO{\mathcal{O}}
\def\mT{\mathcal{T}}
\def\bb{\bm{b}}
\def\E{\mathbb{E}}
\def\P{\mathbb{P}}
\def\R{\mathbb{R}}
\def\S{\mathbb{S}}
\begin{document}

\twocolumn[
\icmltitle{Convergence and Recovery Guarantees of the K-Subspaces Method \\ for Subspace Clustering}

% \icmlsetsymbol{equal}{*}

\begin{icmlauthorlist}
	\icmlauthor{Peng Wang}{ed}	
	\icmlauthor{Huikang Liu}{goo}
	\icmlauthor{Anthony Man-Cho So}{to}
	\icmlauthor{Laura Balzano}{ed}
\end{icmlauthorlist}

\icmlaffiliation{to}{Department of Systems Engineering and Engineering Management, The Chinese University of Hong Kong, Hong Kong}
\icmlaffiliation{goo}{Research Institute for Interdisciplinary Sciences, School of Information Management and Engineering, Shanghai University of Finance and Economics, Shanghai}
\icmlaffiliation{ed}{Department of Electrical Engineering and Computer Science, University of Michigan, Ann Arbor}

\icmlcorrespondingauthor{Peng Wang}{peng8wang@gmail.com}

% You may provide any keywords that you
% find helpful for describing your paper; these are used to populate
% the "keywords" metadata in the PDF but will not be shown in the document
\icmlkeywords{Machine Learning, ICML}

\vskip 0.3in
]

% this must go after the closing bracket ] following \twocolumn[ ...

% This command actually creates the footnote in the first column
% listing the affiliations and the copyright notice.
% The command takes one argument, which is text to display at the start of the footnote.
% The \icmlEqualContribution command is standard text for equal contribution.
% Remove it (just {}) if you do not need this facility.

\printAffiliationsAndNotice{}  % leave blank if no need to mention equal contribution
% \printAffiliationsAndNotice{\icmlEqualContribution} % otherwise use the standard text.

\begin{abstract}
%In this paper, we study the K-subspaces (KSS) method for tackling the problem of subspace clustering in the semi-random union of subspaces (UoS) model, where $N$ points are randomly sampled from $K \ge 2$ overlapping subspaces. To demystify the computational efficiency of the KSS method, this work is devoted to  the theoretical understanding on its convergence behavior and recovery performance. We show that if the initial assignment of the KSS method lies within a neighborhood of a true cluster, it converges at a superlinear rate and finds correct clustering within $\Theta(\log\log N)$ iteration with high probability. Moreover, we propose a thresholding inner-product base initialization method that can provably enter this neighborhood. We also present numerical results of the studied method to support our theoretical development. 
 
The K-subspaces (KSS) method is a generalization of the K-means method for subspace clustering. In this work, we present local convergence analysis and a recovery guarantee for KSS, assuming data are generated by the semi-random union of subspaces model, where $N$ points are randomly sampled from $K \ge 2$ overlapping subspaces. We show that if the initial assignment of the KSS method lies within a neighborhood of a true clustering, it converges at a superlinear rate and finds the correct clustering within $\Theta(\log\log N)$ iterations with high probability. Moreover, we propose a thresholding inner-product based spectral 
method for initialization and prove that it produces a point in this 
neighborhood. We also present numerical results of the studied method to support our theoretical developments. 
\end{abstract}

\section{Introduction}\label{sec:intro}

Subspace clustering (SC) is a fundamental problem in unsupervised learning, which can be applied to do  dimensionality reduction and data analysis. It has found wide applications in diverse fields, such as computer vision \cite{ho2003clustering,vidal2008multiframe}, gene expression analysis \cite{jiang2004cluster,ucar2011combinatorial}, and image segmentation \cite{hong2006multiscale}, to name a few. In research on SC, the union of subspace (UoS) model, which assumes that data points lie in one of multiple underlying subspaces, is a typical model for studying SC. In particular, substantial advances have been made recently on designing algorithms for solving the SC problem and on establishing theoretical foundations in the UoS model; see, e.g., \citet{vidal2011subspace,vidal2016generalized,meng2018general} and the references therein. 

In the UoS model, the goal of SC is to recover the underlying subspaces and cluster the  unlabeled data points into the corresponding subspaces. To achieve this goal, many algorithms have been proposed in the past two decades, such as sparse subspace clustering methods \cite{elhamifar2013sparse,wang2013noisy}, low-rank representation-based methods \cite{liu2012robust}, thresholding-based methods \cite{heckel2015robust,pmlr-v139-li21f}, and K-subspaces (KSS) method \cite{bradley2000k}. In these methods, the KSS method, which is known as a generalization of the K-means method, can handle clusters in subspaces. In particular, it is conceptually simple and has linear complexity per iteration. This computational benefits render it suitable to handle large-scale datasets in practice. However, a complete theoretical understanding of its convergence behavior and recovery performance is not found in the literature, to the best of our knowledge. This is due in part to its alternating and discrete nature, as well as the fact that as with the K-means, KSS can easily get stuck in bad local minima without a good initialization. Consequently, it remains a major challenge to provide the theoretical foundations for KSS. In this work, we provide guarantees for the convergence behavior and recovery performance of the KSS method. We also develop a simple initialization method with provable guarantees for the KSS method. 
%However, the theoretical understanding of its convergence behavior and recovery performance is limited in literature. Besides, like with K-means, KSS can easily get stuck in bad local minima without a good initialization. % In these methods, the KSS method is known to be simple and scalable, which draws significant attention recently. However, the theoretical understanding on its convergence behavior and recovery performance is limited in literature.  Besides, it can easily get stuck at a local minimum due to its non-convex nature. 
%In general, it remains a big challenge to analyze the KSS method due to its alternating and discrete nature. % These substantially limit its development and applications. % In view of the above discussion, the main goals of this work are to lay the theoretical foundations for the convergence behavior and the recovery performance of the KSS method and propose a provable initialization method for it. % in the semi-random UoS model. Therefore, the goal of our work is to lay the theoretical foundations for the convergence behavior and recovery performance of the KSS method. This in turn necessitated the development and analysis of a provably good initialization method, which we also provide. 
It is worth mentioning that our results improve on state-of-the-art theory with respect to allowable affinity between subspaces, and support the algorithm's competitive performance in our numerical evaluation.
% There does not exist a scalable algorithm that can recover the underlying subspaces in the UoS model. 

\subsection{Related Works}\label{subsec:rw}

Over the past years, a substantial body of literature explores algorithmic development and theoretical analysis of SC. One of the most well-studied methods is arguably sparse subspace clustering (SSC), which is motivated by representing each data point as a sparse linear combination of the remaining ones. A seminal work by \citet{elhamifar2013sparse} proposed and studied this method. The algorithm proceeds by solving a convex sparse optimization problem, followed by applying spectral clustering to the graph constructed by a solution of this convex problem. In particular, they showed that when the data points are drawn from the \emph{disjoint} subspaces in the noiseless setting, the solution is non-trivial and no edges in the constructed graph connect two points in different subspaces. This is referred to as the \emph{subspace detection property} (SDP) in literature; see, e.g., \citet{wang2013noisy,soltanolkotabi2012geometric,soltanolkotabi2014robust}. We should point out that SDP does not imply \emph{correct clustering} (CC) of data points as mentioned in  \citet{wang2016graph,pmlr-v139-li21f}. Following this line of work,  % takes advantage of the \emph{self-expressiveness property} of the data points sampled from the UoS model. 
theoretical results on the SSC method in various contexts have been established, and  many variants and extensions of the SSC method have been proposed. For example, \citet{soltanolkotabi2012geometric} developed a unified analysis framework of the SSC method, which showed that the SDP holds even when subspaces can be \emph{overlapping} in the noiseless setting. % In particular, they proved that in the semi-random UoS model the subspace affinity is less than $O\left(1/\log N\right)$.
Later, \citet{soltanolkotabi2014robust} extended their analysis and results to the noisy setting. Meanwhile, an independent work by \citet{wang2013noisy} also studied the behavior of SSC based on the SDP in the noisy setting. In spite of the solid theoretical guarantees and great empirical performance, SSC suffers from high computational cost. To tackle this issue, \citet{dyer2013greedy} applied an orthogonal matching pursuit (OMP) method to SSC. Then, \citet{tschannen2018noisy} analyzed the performance of this method in the noisy setting and  also introduced and studied the matching pursuit (MP) method for SSC. Recently, more and more variants and extensions for solving SSC have been proposed; see, e.g., \citet{ding2021dual,wang2019provable,wu2020sparse,chen2020stochastic,matsushima2019selective,traganitis2017sketched,you2016scalable}. % Due to the limitation of space, we shall not discuss further about SSC. 

\begin{table}[t]
\caption{Comparison of affinity requirement and recovery results of the surveyed methods in the noiseless semi-random UoS model with \emph{overlapping} subspaces ($K\ge 2$).}\vspace{-0.1in}
\label{table-1}
\begin{center}
\begin{footnotesize}
\begin{tabular}{ccccc}
\toprule
{\bf\scriptsize References} & {\bf\scriptsize Methods} &  {\bf\scriptsize Affinity$^a$}  &  {\bf\scriptsize Results} \\
\midrule
\citet{soltanolkotabi2012geometric}  & SSC & $O(\frac{1}{\sqrt{\log N}})$   & SDP  \\
\citet{wang2013noisy} & SSC &  $O(\frac{1}{\sqrt{\log N}})$  &   SDP \\
\citet{tschannen2018noisy}  & (O)MP & $O(\frac{1}{\sqrt{\log N}})$  & SDP \\
\midrule
\citet{wang2016graph} & SSC & $O(\frac{1}{\sqrt{\log N}})$ & CC \\
\citet{heckel2015robust} & TSC &  $O(\frac{1}{\sqrt{\log N}})$  & CC  \\ 
\citet{park2014greedy} & GSR  &  $O(\frac{1}{\sqrt{\log N}})^b$ & CC \\
\citet{lipor2021subspace} &  EKSS & $O(\frac{1}{\sqrt{\log N}})$ & CC \\ 
\midrule
{\bf Ours} & {\bf KSS} &  $O(1)$ & CC \\
\bottomrule
\end{tabular}
\end{footnotesize}
\end{center}
\footnotesize{$^a$We use the notion \eqref{affi:re} to measure the subspace affinity.}
% \footnotesize{$^b$``SDP'' refers to ``subspace detection property''.}
% \footnotesize{$^c$``CC'' refers to ``correct clustering''.}
\footnotesize{$^b$This is obtained by taking $\delta=1/N$ in \citet[Theorem 3.2]{park2014greedy}. }
\vspace{-0.2in}
\end{table}	

As for other methods for SC, \citet{liu2012robust} proposed a low-rank representation (LRR) method by minimizing a nuclear norm regularized problem. In particular, they showed that the proposed method can recover the row space of the data points. Later, \citet{shen2016online} developed an online version of the LLR method, which reduces its computational cost significantly. Another notable approach for SC is thresholding-based methods, which exploit the correlation between data points. 
For example, \citet{heckel2015robust} proposed a thresholding-based subspace clustering (TSC) method, which applies spectral clustering to a weight matrix with entries depending on spherical distances of each data point to its nearest neighbors. They showed that TSC can achieve correct clustering by proving that the formed graph has no false connection and $K$ connected subgraphs. \citet{pmlr-v139-li21f} proposed a thresholding inner-product (TIP) method for SC, which constructs an adjacency matrix by thresholding magnitudes of inner products between data points. In particular, they provided an explicit
bound on the error rate of the TIP method when there are only two subspaces of the same dimension. Moreover, \citet{park2014greedy} proposed a greedy subspace clustering (GSC) method that constructs a neighborhood matrix using a nearest subspace neighbor method and then recovers subspaces by a greedy algorithm. They showed that their approach can guarantee correct clustering. However, they assumed that the dimension of each subspace is known and same and the number of data points in each subspace is also  same. Actually, there are still numerous other popular methods using different techniques for SC, such as matrix factorization-based method \cite{boult1991factorization,pimentel2016group,fan2021large} and principal component analysis type methods \cite{vidal2005generalized,mcwilliams2014subspace}. % Due to the limitation of space, we shall not discuss further here. 

% matrix factorization-based (MF) method, which aims to decompose the data matrix into a basis matrix and a coefficient one. The MF method for subspace clustering can stem from  \citet{boult1991factorization}, in which the MF is employed to segment motions. Recently, \citet{pimentel2016group} proposed a group-sparse MF method for subspace clustering with missing data. More recently, \citet{fan2021large} developed a k-factorization method for large-scale subspace clustering. However, these MF methods have no recovery guarantees due to the non-convex nature. 

In contrast to the above methods, the KSS method \cite{bradley2000k,agarwal2004k,tseng2000nearest} is essentially a generalization of the \emph{k-means} method for SC, which minimizes the sum of distances of each point to its projection onto the assigned subspace, i.e., 
\begin{align}\label{SC-1}
\min_{\mC,\bU}\  \sum_{k=1}^K \sum_{i \in \mC_k} \|\bz_i - \bU_k\bU_k^T\bz_i\|^2, 
\end{align}
where $\{\bm{z}_i\}_{i=1}^N \subseteq \R^n$ denotes the set of $N$ data points, $\mC = \{\mC_k\}_{k=1}^K$ denotes the set of $K\ge 2$ estimated clusters, and $\bU=\begin{bmatrix}
\bU_1 & \dots & \bU_K
\end{bmatrix}$ with $\bU_k $ being an orthonormal basis of the corresponding cluster. Similar to the k-means method, the KSS method proceeds by alternating between the \emph{subspace update step} and the \emph{cluster assignment step}. As a local search algorithm, it is conceptually simple and has linear complexity as a function of the number of data points, while many popular methods based on self-expression property, such as the surveyed SSC, OMP-based SSC, and LLR, have at least quadratic complexity. This computational advantage renders it more suitable to handle large-scale datasets than these self-expression property based-methods. 
However, due to its non-convex nature, it suffers from sensitivity to initialization and lack of theoretical understanding. To fix the former issue, some heuristics for good initialization have been proposed; see, e.g.,  \citet{he2016robust,zhang2009median}. To improve the performance of the KSS method, \citet{gitlin2018improving} employed a coherence pursuit algorithm. Recently, \citet{lipor2021subspace} applied an ensembles approach to the KSS method with random initialization and showed that it achieves correct clustering based on the argument in \citet{heckel2015robust}. However, their analysis can only tackle one KSS iteration. Generally, it remains open to propose a provable initialization scheme for the KSS method and fully understand its convergence behavior and recovery performance. Due to this, the KSS method has mostly been superseded by convex methods based on self-expression property, which are widely studied and have solid theoretical results. Moreover, establishing theoretical foundations for the KSS method may open the door for the study of various non-convex methods for SC. 

\subsection{Our Contributions}\label{subsec:oc}

In this work, we study the KSS method for SC in the semi-random UoS model. First, we provide theoretical guarantees for the convergence behavior and recovery performance of the KSS method. Specifically, we prove the existence of a \emph{basin of attraction}, whose radius is as large as $O(\sqrt{N})$ around the true clustering of the data points, when the cluster sample sizes are on the same order and the subspace dimensions are also on the same order. %when the sample sizes and dimensions of all subspaces are  respectively on the same order. 
If the initial assignment of the KSS method lies within this basin, the algorithm is guaranteed to converge to the true clustering at a \emph{superlinear} rate. In particular, once the number of iterations reaches $\Theta(\log\log N)$, the KSS method yields the true clustering with the corresponding orthonormal bases exactly. It is worth emphasizing that these results are obtained under the condition that the normalized affinity between pairwise subspaces is $O(1)$, which is generally milder than those in the existing literature; see the comparison in Table \ref{table-1}. Second,  
we propose a thresholding inner-product based spectral method for initialization of the KSS method. We show that it can generate a point lying in the basin of attraction of the KSS method by deriving its clustering error rate. Our core argument is to derive a spectral bound for a random adjacency matrix without independence structure, which could be of independent interest. In conclusion, our work demystifies the computational efficiency of the KSS method and provides a provable initialization scheme for it, thus bridging the gap between theory and practice. From a broader perspective, our work also contributes to the literature on simple and scalable non-convex methods with provable guarantees; see, e.g., \citet{wang2021optimal,wang2021non,zhang2020symmetry,gao2019iterative,boumal2016nonconvex,ling2022improved}. 

\emph{Notation.} Let $\R^n$ be the $n$-dimensional Euclidean space and $\|\cdot\|$ be the Euclidean norm.
Given a matrix $\bA$, we use $\|\bA\|$ to denote its spectral norm, $\sigma_i(\bA)$ its $i$-th largest singular value, $\|\bA\|_F$ its Frobenius norm, and $a_{ij}$ its $(i,j)$-th element. Given a vector $\ba \in \R^n$, we denote by $a_i$ its $i$-th element. Given a positive integer $n$, we denote by $[n]$ the set $\{1,\dots,n\}$. Given $d_1,\dots,d_K$, let $d_{\min} = \min\{d_k:k\in [K]\}$ and $d_{\max} = \max\{d_k:k\in [K]\}$. Given a discrete set $S$, we denote by $|S|$ its cardinality. Given two sets $A,B \subseteq [n]$, the set difference between $A$ and $B$ denoted by $A\setminus B$ is defined by $A\setminus B=\{x \in A: x\notin B\}$. We use $\mO^{n\times d}$ to denote the set of all $n\times d$ matrices that have orthonormal columns (in particular, $\mO^d$ denotes the set of all $d\times d$ orthogonal matrices) and $\Pi_K$ to denote the set of all $K\times K$ permutation matrices. Let $\pi:[K] \rightarrow [K]$ denote a permutation of the elements in $[K]$. Each $\pi$ corresponds to a $\bQ_{\pi} = \{q_{ij}\}_{1\le i,j\le K} \in \Pi_K$ such that $q_{ij}=1$ if $j=\pi(i)$ and $q_{ij}=0$ otherwise for all $i \in [K]$. The converse also holds. Moreover, for any $\bU,\bV \in \mO^{n\times d}$, we denote by $d(\bU,\bV) =  \left\| \bU\bU^T-\bV\bV^T \right\|$ the distance between the subspaces spanned by $\bU$ and $\bV$. We define $\S^{d-1}=\left\{ \ba \in \R^d: \|\ba\|=1 \right\}$ and denote by $ \mathrm{Unif}(\S^{d-1})$ a uniform distribution over the sphere in $\R^d$. For  non-negative sequences $\{a_k\}$ and $\{b_k\}$, we write $a_k \gtrsim b_k$ if there exists a universal constant $C>0$ such that $a_k \ge Cb_k$ for all $k$.

\section{Preliminaries and Main Results}\label{sec:preli}

In this section, we formally set up the SC problem in the semi-random UoS model, introduce the KSS method for tackling the SC problem, propose an initialization scheme for the KSS method, and give a summary of our main results. 

\subsection{Semi-Random UoS Model}

\begin{defi}\label{memship-matrix}
Suppose that a family of sets $\{\mC_k\}_{k=1}^K$ is a partition of $[N]$. %, i.e., ${\cup}_{k=1}^K\mC_k = [N]$ and $\mC_k \cap \mC_\ell = \emptyset$ for all $1 \le k \neq \ell \le [K]$. 
We say that $\bH \in \R^{N\times K}$ is a membership matrix if $h_{ik} = 1$ if $i\in \mC_k$ and $h_{ik}=0$ otherwise. For simplicity, we use $\mM^{N \times K}$ to denote the collections of all such $N\times K$ membership matrices.  
\end{defi}
% Intuitively, a family of sets $\mI_1,\dots,\mI_K$ represents a partition of $n$ nodes into $K$ communities such that $h_{ik}=1$ if node $i$ belongs to the community encoded by $\mI_k$ and $h_{ik}=0$ otherwise. Given a fixed $\bH \in \M_{n,K}$, $\bH\bQ$ for any $\bQ \in \Pi_K$ represents the same community structure as $\bH$ up to a permutation of the labels. 
% Obviously, each $\bH \in \mM^{N \times K}$ represents a partition of $N$ points into $K$ clusters. Moreover, 
Given an $\bH \in \mM^{N\times K}$, each row of it has exactly one $1$ and $(K-1)$ 0's. % Then, we note that $\mM^{N\times K}=\left\{ \bH \in \R^{N \times K}: \bH\bo_K = \bo_N,\ \bH \in \{0,1\}^{N \times K} \right\}$. 
Besides, $\bH\bQ$ for any $\bQ \in \Pi_K$ represents the same partition as $\bH$ up to a permutation of the cluster labels. We define the distance between two membership matrices $\bH,\bH^\prime \in \mM^{N\times K}$ by
\begin{align*}
d_F(\bH,\bH^\prime) = \min_{\bQ \in \Pi_K} \|\bH-\bH^\prime\bQ\|_F.
\end{align*} 
Then, one can verify that the number of misclassified points in $\bH$ with respect to   $\bH^\prime$ is $d^2_F(\bH,\bH^\prime)/2$. 

\begin{defi}[Semi-Random UoS Model\footnote{This model is called semi-random UoS because the subspaces are arbitrary but data points are randomly generated.}]\label{UoS}
% Let $\{\bz_i\}_{i=1}^N$ be a collection of $N \ge 2$ data points and 
Let $S_k^*$ denote a subspace of $\R^n$ of dimension $d_k$ with $\bU_k^* \in \mO^{n\times d_k}$ being its orthonormal basis for all $k\in [K]$. Let $\bH^* \in \mM^{N\times K}$ represent a partition of $[N]$ into $K$ clusters, each of which is of size $N_k$ for all $k \in [K]$. Then, we say that a collection of $N\ge 2$ points $\{\bz_i\}_{i=1}^N$ is generated according to the semi-random UoS model with parameters $\left(N,K,\{\bU_k^*\}_{k=1}^K,\bHs\right)$ if 
\begin{align}\label{z=Ua}
\bz_i = \bU_{k}^*\ba_i,
\end{align}
where $k \in [K]$ satisfies $h_{ik}^*=1$ and $\ba_i \overset{i.i.d.}{\sim} \mathrm{Unif}(\S^{d_k-1})$ for all $i \in [N]$. 
\end{defi}
Intuitively, given an unknown  partition encoded by $\bHs$, this model generates a collection of unlabeled observations. Given these observations, the goal  of SC is to design an algorithm that finds the true partition, i.e., $\bHs\bQ$ for some $\bQ \in \Pi_K$. We should point out that the subspace dimensions $d_1,\dots,d_K$ are also all unknown. 

\subsection{The KSS Method}

%In this section, we introduce the KSS method. % Generally, the KSS method assumes that the subspace dimensions satisfy $d_1=\dots=d_K=d$ and are known beforehand. 
%It is known that the KSS method alternates between  \emph{subspace update step} and the \emph{cluster assignment step}. Specifically, given the current clusters represented by $\bH^t \in \mM^{N \times K}$, the method generates the next iterates via for each $k \in [K]$, 
%\begin{align}
%\bU_k^{t+1} = \mathrm{PCA}\left(\sum_{i=1}^Nh_{ik}^t\bz_i\bz_i^T, \hat{d}_k\right),
%\end{align}
%where $\hat{d}_k$ is an input parameter and $\mathrm{PCA}(\bA,d):\S^n \times \R \rightarrow \R^{n\times d}$ is the operator that computes the eigenvectors associated with the leading $d$ eigenvalues of $\bA$, and 
%\begin{align}\label{sol:H}
%h_{ik}^{t+1} = \begin{cases}
%1, & \text{if}\ k = I_i, \\
%0, & \text{otherwise}, 
%\end{cases}
%\end{align}
%where $I_i \in [K]$ satisfies $\|\bU^{{t+1}^T}_{I_i}\bz_i\| \ge \|\bU^{{t+1}^T}_k\bz_i\|$ for all $k \neq I_i$.

In this subsection, we introduce the KSS method by interpreting it as an application of the alternating minimization method to Problem \eqref{SC-1}. Such an interpretation is similar to that in \citet{bradley2000k}. % can aid in the analysis of the KSS method. 
By introducing $\bH \in \mM^{N\times K}$, we can reformulate Problem \eqref{SC-1} as
\begin{align}\label{SC-2}
\min\ &\  \sum_{i=1}^N\sum_{k=1}^K h_{ik} \left(  \|\bz_i\|^2 - \|\bU_k^T\bz_i\|^2 \right)\\
\st\ &\ \bH \in \mM^{N\times K},\ \bU_k \in  \mO^{n \times \hat{d}_k},\ \text{for\ all}\ k \in [K], \notag
\end{align}
where $\hat{d}_k$ for all $k \in [K]$ are candidate subspace dimensions. Observe that this problem is in a form that is amenable to the alternating minimization method (see, e.g., \citet{ghosh2020alternating,hardt2014understanding, zhang2020phase}). Given the current iterate $(\bH^t,\bU_1^t,\dots,\bU_K^t) \in \mM^{N \times K}\times \mO^{n\times \hat{d}_1} \times \dots \times \mO^{n\times \hat{d}_K}  $, the method generates the next iterate via
\begin{align}
\bU_k^{t+1} & \in \argmin_{\bU_k \in \mO^{n\times \hat{d}_k}} \sum_{i=1}^N h_{ik}^t \left(  \|\bz_i\|^2 - \|\bU_k^T\bz_i\|^2 \right) \label{update:Uk}
\end{align}
for all $k \in [K]$ and
\begin{align}\label{update:H}
\bH^{t+1} & \in \mT\left(\bG_{\bH}(\bU^{t+1})\right),% \argmin_{\bH \in \mM^{N\times K}} \sum_{i=1}^N\sum_{k=1}^K h_{ik} \left(  \|\bz_i\|^2 - \|\bU_k^{{t+1}^T}\bz_i\|^2 \right),\label{update:H}
\end{align}
where the $(i,k)$-th element of $\bG_{\bH}(\bU) \in \R^{N\times K}$ is $\|\bz_i\|^2-\|\bU_k^T\bz_i\|^2$ and $\mT$ denotes the operator that for any $\bG \in \R^{N \times K}$,
\begin{align}\label{T-H}
\mT(\bG) = \argmin \left\{ \langle \bG, \bH \rangle: \bH \in \mM^{N\times K} \right\}. 
\end{align}
It is worth noting that the updates \eqref{update:Uk} and \eqref{update:H} both admit closed-form solutions, which respectively correspond to the \emph{subspace update step} and the \emph{cluster assignment step} of the KSS method. Indeed, the update \eqref{update:Uk} is typically a PCA problem and its solution is given by
%$\bU_k^{t+1} \in \mO^{n\times d}$, whose columns consist of  the eigenvectors associated with the leading $d$ eigenvalues of $G_{\bU_k}(\bH^t)=\sum_{i=1}^Nh_{ik}^t\bz_i\bz_i^T$.
\begin{align}\label{sol:Uk}
\bU_k^{t+1}  = \mathrm{PCA}\left(\sum_{i=1}^Nh_{ik}^t\bz_i\bz_i^T, \hat{d}_k\right),
\end{align}
 where $\mathrm{PCA}(\bA,d):\S^n \times \R \rightarrow \R^{n\times d}$ is the operator that computes the eigenvectors associated with the $d$ leading eigenvalues of $\bA$. 
% we can also obtain the leading $d$ eigenvalues of $G_{\bU_k}(\bH^t)$, denoted by $\lambda_1^t,\dots,\lambda_d^t$, when we compute $\bU_k^{t+1}$ in \eqref{sol:Uk}
Moreover, the update \eqref{update:H} is a special assignment problem, whose solution is given by 
\begin{align}\label{sol:H}
h_{ik}^{t+1} = \begin{cases}
1, & \text{if}\ k = I_i, \\
0, & \text{otherwise}, 
\end{cases}
\end{align}
where $I_i \in [K]$ satisfies $\|\bU^{{t+1}^T}_{I_i}\bz_i\| \ge \|\bU^{{t+1}^T}_k\bz_i\|$ for all $k \neq I_i$; see Lemma \ref{lem:solution-T-h}.      

A natural question arising in the update \eqref{sol:Uk} is how to choose $\hat{d}_k$ for all $k \in [K]$. Generally, the KSS method assumes that the subspace dimensions $d_1,\dots,d_K$ are known beforehand \cite{vidal2011subspace}, which is not practical in applications. Even if $d_1,\dots,d_K$ are known but unequal, it is still unknown how to find an one-to-one mapping between $\{d_k\}_{k=1}^K$ and $\{\hat{d}_k\}_{k=1}^K$ due to the fact that the permutation of clusters is unknown. To fix this issue, we propose an adaptive strategy to choose $\hat{d}_k$ for all $k \in [K]$. Specifically, let $\lambda_{k1}^t \ge \dots \ge \lambda_{kd}^t$ be the $d$ leading eigenvalues of $\sum_{i=1}^Nh_{ik}^t\bz_i\bz_i^T$ for all $k \in [K]$, where $d$ is an input parameter satisfying $d > d_{\max}$. Then for all $k \in [K]$, we set
\begin{align}\label{esti:dk}
\hat{d}^{t+1}_k  = \argmax_{i\in [d-1]} \left(\lambda_{ki}^t - \lambda_{k(i+1)}^t\right)
\end{align}
and replace \eqref{sol:Uk} by
\begin{align}\label{sol:Uk-1}
\bU_k^{t+1}  = \mathrm{PCA}\left(\sum_{i=1}^Nh_{ik}^t\bz_i\bz_i^T, \hat{d}^{t+1}_k\right).
\end{align}

\subsection{Initialization Method}
A key ingredient in our approach is to identify a proper initial point $\bH^0$ that may guarantee rapid convergence of the KSS method for solving Problem \eqref{SC-2}. Motivated by the thresholding inner-product based scheme in \citet{pmlr-v139-li21f}, we propose a thresholding  inner-product based spectral method (TIPS) for initialization. Specifically, given a thresholding parameter $\tau > 0$, a graph $G$ with adjacency matrix $\bA \in \R^{N\times N}$ is generated by 
\begin{align}\label{step:thresholding}
a_{ij}  = \begin{cases}
1, &\ \text{if}\ |\langle \bz_i,\bz_j \rangle| \ge \tau\ \text{and}\ i\neq j, \\
0, &\ \text{otherwise},
\end{cases}
\end{align}
for all $1 \le i \le j \le N$. Then, the initial cluster assignment $\bH^0$ is obtained by applying the \emph{k-means} to the matrix $\bV$ formed by the eigenvectors associated with the $K$ leading eigenvalues of $\bA$. Although it is NP-hard in the worst case to compute a global minimizer of the k-means problem (see, e.g., \citet{aloise2009np}), some polynomial-time algorithms have been proposed for finding an approximate solution whose value is within a constant fraction of the optimal value (see, e.g., \citet{kumar2010clustering}), i.e., 
\begin{align}\label{k-means}
&\ (\bH^0,\hat{\bm{X}}) \in \mM^{N\times K} \times \R^{K\times K}\quad \st\ \ \|\bm{H}^0\hat{\bm{X}}-\bV\|_F^2  \notag\\ 
&\quad\ \le (1+\theta) \min_{(\bm{H},\bm{X}) \in \mM^{N \times K} \times \R^{K\times K}}\|\bm{H}\bm{X}-\bV\|_F^2,
\end{align} 
where $\theta > 0$ is a constant. We assume that we can find such an approximate solution. We now summarize the proposed method in Algorithm \ref{alg-1}. 
\begin{algorithm}[!hbtp]
	\caption{The TIPS initialized KSS method}   
	\begin{algorithmic}[1]  
		\STATE \textbf{Input}: samples $\{\bz_i\}_{i=1}^N$, $\tau > 0$,  $\theta > 0$, $d,T,K\in \mathbb{Z}_+ $ \\
		\texttt{/*\quad The TIPS initialization \quad */} 
		\STATE construct an adjacency matrix $\bA \in \R^{N\times N}$ by \eqref{step:thresholding} 
		\STATE calculate $\bV \in \R^{N\times K}$ formed by the eigenvectors associated with the $K$ leading  eigenvalues of $\bA$
		\STATE let $(\bm{H}^0, \hat{\bm{X}}) $ be a $(1+\theta)$-approximate solution to the k-means problem \eqref{k-means} with $K$ clusters and input  $\bV$\\	
		\texttt{/*\quad The KSS method  \quad */} 
		\FOR{$t=0,1,\dots,T$} 
		\STATE \texttt{/*\quad subspace update step  \quad */} 
		\FOR{$k=1,\dots,K$}
%		 \STATE set $\bG_{\bU_k}(\bH^{t}) \leftarrow \sum_{i=1}^Nh_{ik}^{t}\bz_i\bz_i^T$ 
%		 \STATE compute the eigenvectors $\bu_{k,1}^{t},\dots,\bu_{k,d}^{t}$ associated with the $d$ leading  eigenvalues $\lambda_{k1}^{t} \ge \dots \ge \lambda_{kd}^{t}$ of $\sum_{i=1}^Nh_{ik}^{t}\bz_i\bz_i^T$
%		 \STATE compute $\hat{d}^{t+1}_k$ via \eqref{esti:dk}
%		 \STATE set $\bU_k^{t+1} = \begin{bmatrix}
%		 \bu_{k,1}^{t} & \dots & \bu_{k,\hat{d}_k^{t+1}}^{t}
%		 \end{bmatrix}$
	    \STATE Compute $\hat{d}_k^{t+1}$ via \eqref{esti:dk} and $\bU_k^{t+1}$ via \eqref{sol:Uk-1} 
 		\ENDFOR 
		\STATE \texttt{/*\quad cluster assignment step  \quad */} 
		% \STATE compute $\bU^{{t+1}^T}_k\bz_i$ for all $i \in [N]$ and all $k \in [K]$.
		\STATE compute $\bH^{t+1}$ via \eqref{sol:H}  
%		\IF{$\bH^{t+1}=\bH^t$}
%		\STATE terminate and output $\bH^{t+1}$
%		\ENDIF			
		\ENDFOR
%		\FOR{$k=1,\dots,K$}
%		
%		\STATE compute $\bar{d}_k$ by calculating the number of non-zero eigenvalues of $G_{\bU_k}(\bH^{T+1})$
%		% \STATE $\pi^{T+1} \leftarrow \argmin\{\|\bH^{T+1} - \bHs\bQ\|_F:\bQ \in \Pi_K\}$ 
%		\STATE $\tilde{\bU}_{k}^{T+2} \leftarrow \mathrm{PCA}\left(G_{\bU_{k}}(\bH^{T+1}), \bar{d}_k\right)$
%		\ENDFOR 
	\end{algorithmic}
	\label{alg-1}
\end{algorithm}
% Before we proceed, some remarks are in order. 

\subsection{Main Theorems}

Before we proceed, we introduce a definition to capture notions of affinity between pairwise subspaces and impose an assumption on the affinity. 
\begin{defi}\label{def:affi}
The affinity between two subspaces $S_k$ and $S_\ell$ is defined by
\begin{align}\label{affi:two}
\mathrm{aff}(S_k,S_\ell) = \sqrt{\sum_{i=1}^{\min\{d_k,d_\ell\}} \left(\sigma_{k\ell}^{(i)}\right)^2},
\end{align}
where $\sigma_{k\ell}^{(1)} \ge \dots \ge \sigma_{k\ell}^{(\min\{d_k,d_\ell\})} \ge 0$ are the singular vaules of $\bU_k^{T}\bU_\ell \in \R^{d_k \times d_\ell}$ with $\bU_k,\bU_\ell$ being respectively orthonormal bases of $S_k$ and $S_\ell$. The normalized affinity between two subspaces $S_k$ and $S_\ell$ is defined by
\begin{align}\label{affi:ave}
\overline{\mathrm{aff}}(S_k,S_\ell) = \frac{\mathrm{aff}(S_k,S_\ell)}{\min\{\sqrt{d_k},\sqrt{d_\ell}\}}.
\end{align}
\end{defi}
 For ease of exposition, we define the maximum of the normalized affinities as
\begin{align}\label{affi:re}
\kappa = \max_{1 \le k \neq \ell \le K} \overline{\mathrm{aff}}(S_k^*,S_\ell^*)
\end{align}
and define
\begin{align}\label{ratio:d-N}
\kappa_d = \frac{d_{\max}}{d_{\min}},\ \kappa_N = \frac{N_{\max}}{N_{\min}}. 
\end{align}
\begin{assumption}\label{AS:1}
The affinity between pairwise subspaces in the UoS model satisfies $\kappa \in (0, 1/2]$. 
%\begin{align}\label{affi:re}
% \le  \kappa,
%\end{align}
% where $\kappa \in (0,1/2]$ is a constant.  
\end{assumption}
We remark that this affinity condition is milder than those in the related literature. Because this assumption allows  the affinity $\mathrm{aff}(S_k,S_{\ell})$ to be as large as $\min\{\sqrt{d_k},\sqrt{d_\ell}\}/2$, while those in the literature require $\mathrm{aff}(S_k,S_{\ell}) \lesssim \min\{\sqrt{d_k},\sqrt{d_\ell}\}/\sqrt{\log N}$ for all $1\le k \neq \ell \le K$. Please see the comparison in Table \ref{table-1}. We next present a main theorem of this work, which shows that the KSS method converges superlinearly and achieves the correct clustering under Assumption \ref{AS:1}.

\begin{thm}\label{thm:KSS}
Let $\{\bz_i\}_{i=1}^N$ be data points generated according to the semi-random UoS model with parameters $(N,K,\{\bU_k^*\}_{k=1}^K,\bHs)$. Suppose that Assumption \ref{AS:1} holds, $N_{\min} \gtrsim  d_k \gtrsim \log N$ for all $k \in [K]$, and the initial point $\bH^0 \in \mM^{N\times K}$ satisfies 
\begin{align}\label{H:thm:init}
 d_F(\bH^0,\bHs) \le \frac{(1-\kappa)N_{\min}}{5\kappa_d\sqrt{N}}.
\end{align}
Set $T=\Theta\left(\log\log N \right)$ and $d \in \mathbb{Z}_+$ satisfying $d>d_{\max}$ in Algorithm \ref{alg-1}. Then, the following statements hold with probability at least $1-N^{-\Omega(1)}$: \\
(i) For all $t \in [T]$, it holds that 
\begin{align}\label{H:rate}
d_F(\bH^{t},\bHs) \le \kappa_1^{2^{t}-1} d_F\left(\bH^{0}, \bH^*\right), 
\end{align}
where $\kappa_1 \in (0,1)$ is an absolute constant. \\
%, $\pi^t:[K] \rightarrow [K]$ is a permutation such that  
%\begin{align}\label{min:Q}
%\bQ_{\pi^t} \in \argmin_{\bQ \in \Pi_K} \|\bH^t - \bHs\bQ\|_F,
%\end{align} 
%and $\tilde{\bU}^{t+1}_{\pi^t(k)}$ consists of eigenvectors associated with the top $d_k$ eigenvalues of $\bG_{\bU_{\pi^t(k)}}(\bH^t)$ for all $k \in [K]$. \\
(ii) It holds for a permutation $\pi :[K]\rightarrow [K]$ that 
\begin{align}\label{HT:thm}
\bH^{T} =  \bHs\bQ_{\pi}
\end{align}
and $\hat{d}^{T+1}_{\pi(k)} = d_k$ for all $k \in [K]$,
\begin{align}\label{UT:thm}
\bU^{T+1}_{\pi (k)}  = \bU_k^*\bO_k,\ \bO_k \in \mO^{d_k}\ \text{for all}\ k \in[K].
\end{align}
\end{thm}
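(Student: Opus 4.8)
The plan is to prove a one-step \emph{quadratic contraction} of the clustering error and then iterate it. Write $m_t = d_F^2(\bH^t,\bHs)/2$ for the number of misclassified points and let $\mC_k^* = \{i : h_{ik}^*=1\}$ be the true $k$-th cluster. The key claim I would establish is that, on a single high-probability event depending only on the data $\{\ba_i\}$, every KSS iteration started from an $\bH^t$ in the basin \eqref{H:thm:init} satisfies
\[ d_F(\bH^{t+1},\bHs)\ \lesssim\ \frac{C_0}{\sqrt{N_{\min}}}\,d_F^2(\bH^t,\bHs) \]
for an absolute constant $C_0$ (with only mild dependence on $\kappa_d,\kappa_N,\kappa$). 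Granting this, set $\kappa_1 = C_0\,d_F(\bH^0,\bHs)/\sqrt{N_{\min}}$; the basin bound \eqref{H:thm:init} together with $N_{\min}\le N$ forces $\kappa_1 \le C_0(1-\kappa)/(5\kappa_d) < 1$, an absolute constant, and a one-line induction (the bound is non-increasing, so all iterates stay in the basin and the recursion reapplies) converts it into \eqref{H:rate}. Since $d_F(\bH^0,\bHs) = O(\sqrt N)$ and $d_F$ takes values in $\{0\}\cup[\sqrt2,\infty)$, the bound \eqref{H:rate} drops below $\sqrt2$, hence to exactly $0$, as soon as $2^T-1\gtrsim\log N$, i.e.\ after $T = \Theta(\log\log N)$ steps, which gives \eqref{HT:thm}.

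To prove the one-step inequality I would chain two estimates, one per sub-step, and set $\epsilon_t := \max_k d(\bU_k^{t+1},\bU_k^*)$. For the subspace update, I would compare the sample covariance $\sum_i h_{ik}^t\bz_i\bz_i^T$ with the ideal covariance $\sum_{i\in\mC_k^*}\bz_i\bz_i^T$; the two differ only on the misassigned points, so their difference has spectral norm $\lesssim m_t$. On the good event the ideal covariance concentrates around $(N_k/d_k)\bU_k^*(\bU_k^*)^T$, giving a spectral gap $\gtrsim N_k/d_k$ at index $d_k$, and a Davis--Kahan argument yields $d(\bU_k^{t+1},\bU_k^*)\lesssim d_k\,d_F^2(\bH^t,\bHs)/N_k$. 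For the assignment step, a point $\bz_i = \bU_k^*\ba_i$ is misassigned only when $\|(\bU_\ell^{t+1})^T\bz_i\|\ge\|(\bU_k^{t+1})^T\bz_i\|$ for some $\ell\neq k$; I would lower-bound the correct-subspace projection by $1-O(\epsilon_t^2)$ and upper-bound each competing projection using $\E\|(\bU_\ell^*)^T\bU_k^*\ba_i\|^2\le\kappa^2\le 1/4$ together with the perturbation $d(\bU_\ell^{t+1},\bU_\ell^*)$. Thus misassignment forces the random quantity $\|(\bU_\ell^*)^T\bU_k^*\ba_i\|^2$ to exceed $1-O(\epsilon_t)$, and the number of points whose coordinate vector $\ba_i$ falls within this shrunken $O(\epsilon_t)$ margin is, on the good event, of order $\epsilon_t^2 N/d_{\max}^2 \asymp d_F^4(\bH^t,\bHs)/N_{\min}$. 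Establishing this \emph{squared} (rather than linear) margin count is precisely what supplies the extra factor of $m_t$ and hence the quadratic, superlinear rate.

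Part (ii) then follows from part (i). Once $d_F(\bH^T,\bHs)=0$ we have $\bH^T = \bHs\bQ_{\pi}$, and for each $k$ the covariance $\sum_i h^T_{i,\pi(k)}\bz_i\bz_i^T$ equals $\sum_{i\in\mC_k^*}\bz_i\bz_i^T$ exactly. On the good event this matrix has rank exactly $d_k$, with $d_k$ comparable positive eigenvalues $\approx N_k/d_k$ and vanishing $(d_k{+}1)$-th eigenvalue; hence the largest consecutive eigengap scanned in \eqref{esti:dk} is attained at index $d_k$, giving $\hat d^{T+1}_{\pi(k)} = d_k$, and the PCA step returns an orthonormal basis of $S_k^*$, i.e.\ \eqref{UT:thm} with $\bO_k\in\mO^{d_k}$.

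I expect the main obstacle to be twofold. First, $\bU_k^{t+1}$ is built from the very points $\bz_i$ that are then re-classified against it, so the assignment-step concentration cannot treat $\ba_i$ as independent of $\bU_k^{t+1}$; I would resolve this with a leave-one-out surrogate $\bU_k^{t+1,(i)}$ that excludes $\bz_i$, is independent of $\ba_i$, and differs from $\bU_k^{t+1}$ by $O(1/N_k)$ in spectral norm. Second, and more delicate, the iterates $\bH^t$ are themselves random and adaptively generated, so a bound for a single fixed $\bH$ cannot be union-bounded over the exponentially many clusterings in the basin at the target failure probability $N^{-\Omega(1)}$; the one-step estimate must instead be phrased as a deterministic consequence of one event on $\{\ba_i\}$ (eigenvalue concentration, affinity and projection concentration, and leave-one-out stability) strong enough to apply along the realized trajectory. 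Making the margin count yield the genuine squared scaling is the technical heart, and it is here that the hypothesis $d_k\gtrsim\log N$ enters, ensuring the relevant tail probabilities are polynomially small in $N$.
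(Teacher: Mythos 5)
Your outer skeleton matches the paper's: a one-step quadratic contraction, an induction giving \eqref{H:rate}, discreteness of $d_F$ (values in $\{0\}\cup[\sqrt{2},\infty)$) forcing exact recovery after $\Theta(\log\log N)$ steps, and part (ii) read off from the exact clustering. The gap is that the two intermediate estimates you chain cannot both be established, and the place where you put the ``quadratic gain'' is the wrong one. For the subspace update you bound the covariance perturbation by treating each misassigned point as contributing spectral norm $O(1)$, which gives $d(\bU_k^{t+1},\bU_k^*)\lesssim d_k m_t/N_k$. This loses a factor of $d_{\min}$ compared with the paper's Lemmas \ref{lem:spec-B-kl} and \ref{lem:contra-U-less}, whose whole point is that the $n_{k\ell}$ contaminating points from cluster $\ell$ have empirical covariance $\bm{\Psi}_{k\ell}\approx\frac{1}{d_\ell}\bI_{d_\ell}$ (random unit vectors in a $d_\ell$-dimensional subspace are nearly orthogonal), so the contamination is $\lesssim m_t/d_{\min}$ rather than $m_t$. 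This refinement is not cosmetic: the basin \eqref{H:thm:init} permits $m_t=\Theta(N)$ misclassified points, and in that regime your bound $d_k m_t/N_k\asymp d_k\gg 1$ is vacuous (the eigengap argument needed for $\hat d_k^{t+1}=d_k$ in \eqref{esti:dk} fails for the same reason), whereas the paper's bound stays below $1$. With your bound alone the theorem only follows on a basin smaller by a factor of about $\sqrt{d_{\max}}\gtrsim\sqrt{\log N}$.

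The factor you hope to recover in the assignment step --- that the number of points with $\|\bU_\ell^{*^T}\bz_i\|\ge 1-O(\epsilon_t)$ is of order $\epsilon_t^2N/d_{\max}^2$ --- has no probabilistic mechanism behind it and is false in precisely the regimes where you need it. When $\epsilon_t$ is below a fixed constant, Assumption \ref{AS:1} ($\kappa\le 1/2$) plus concentration (using $d_k\gtrsim\log N$) gives $\|\bU_\ell^{*^T}\bz_i\|\le\kappa+o(1)$ for \emph{all} $i$ simultaneously, so the count is exactly zero; this is the content of Lemma \ref{lem:one-step} (one more step yields exact recovery). When $\epsilon_t$ is of constant order --- which is all your crude subspace bound guarantees at the basin edge --- the margin $1-O(\epsilon_t)$ can drop below $\kappa$ and the count can be all $N$ points. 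The paper needs no margin count at all: Lemmas \ref{lem:contra-T-h-less} and \ref{lem-less:contra-H} give the Lipschitz-type per-point bound $\|\bh_i^{t+1}-\bh_i^*\bQ_{\pi}\|\le\frac{2}{1-\kappa}\bigl(\sum_k d^2(\bU_{\pi(k)}^{t+1},\bU_k^*)\bigr)^{1/2}$, valid for every $\epsilon_t$, and summing over $i$ yields $d_F(\bH^{t+1},\bHs)\lesssim\sqrt{N}\,\epsilon_t$; the quadratic contraction then comes entirely from the $1/d_{\min}$-refined subspace bound. Note also that this assignment bound is deterministic given the subspace errors and the true-subspace projections $\|\bU_\ell^{*^T}\bz_i\|$, so the within-iteration data reuse you propose to fix with a leave-one-out construction never arises; as for adaptivity across iterations, the paper does not build the single uniform event you envision either, but simply applies Proposition \ref{prop:basin-attra} along the realized trajectory with a union bound over the $T+1$ iterations, which is why $T=\Theta(\log\log N)$ enters the probability accounting.
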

Before we proceed, some remarks are in order. First, while Problem \eqref{SC-1} is NP-hard in the worst case \cite{gitlin2018improving}, the assumption that the data points are generated by the semi-random UoS model allows us to conduct an average-case analysis of the KSS method.
% Suppose that the initial point of the KSS method lies in a suitable neighborhood around the true clusters $\bHs$, termed as \emph{basin of attraction}. The KSS iterations 
Second, a neighborhood of size $O\left(\frac{N_{\min}}{\kappa_d\sqrt{N}}\right)$ around each true cluster  forms a \emph{basin of attraction} in the UoS model, in which the KSS method converges superlinearly. In particular, if $\kappa_d,\kappa_N$ are both constant, we can see that the size of this basin is $O(\sqrt{N})$, which is rather large. Provided that the initial point $\bH^0$ lies within this basin, the subsequent iterates are guaranteed to converge to ground truth at a \emph{superlinear} rate. Third, if the number of iterations reaches $\Theta(\log\log N)$, the KSS method can not only find correct clustering, but also exactly recovers the orthonormal basis of each subspace. This demonstrates the efficacy of the KSS method. % Besides, our proposed scheme \eqref{esti:dk} is guaranteed to find the correct subspace dimensions. 
Finally, any method that can return a point satisfying \eqref{H:thm:init} is qualified as an initialization scheme for the KSS method. In this work, we design a simple initialization
scheme in the first stage of Algorithm \ref{alg-1} that can provably generate a point in the basin of attraction under the following assumption. 
Before we proceed, let $\bB \in \R^{K\times K}$ be a symmetric matrix whose elements are given by
\begin{align}\label{bkl}
b_{k\ell} =  2 - 2\Phi\left(\frac{\tau\sqrt{d_kd_\ell}}{\mathrm{aff}(S_k^*,S_\ell^*)}\right),\ \forall\ 1\le k, \ell \le K, 
\end{align}
where $\Phi(x) = \frac{1}{\sqrt{2\pi}} \int_{-\infty}^x \exp\left(-\frac{t^2}{2}\right) dt$ denotes the cumulative distribution function of the standard normal distribution. It is worth noting that $b_{k\ell}$ is an approximation of the probability of $a_{ij}=1$ if $\bz_i \in S_k^*$ and $\bz_j \in S_\ell^*$ for all $1\le k, \ell \le K$, where $a_{ij}$ is given in \eqref{step:thresholding}; see Lemma \ref{lem:p-q}.  
\begin{assumption}\label{AS:2}
The thresholding parameter is set as 
\begin{align}\label{tau}
\tau =  \frac{\sqrt{c}}{\sqrt{d_{\max}}},
\end{align}
where $c>0$ is a constant. The parameter $\kappa_d$ is a constant and the maximum of the normalized affinities satisfying
\begin{align}\label{kappa:AS2}
\kappa \in \left(0, \frac{\sqrt{c}}{\sqrt{\kappa_d}\Phi^{-1}\left( 1-\frac{1-\Phi(\sqrt{c})}{2(K-1)} \right)} \right)
\end{align}
is also a constant. Moreover, the affinity between pairwise subspaces satisfies
\begin{align}\label{affi:AS2}
\mathrm{aff}(S_k^*,S_\ell^*) \gtrsim \log N,\ \forall\ 1 \le k \neq \ell \le K
\end{align}
and the subspace dimension satisfies
\begin{align}\label{subdim:AS2}
d_{\min} \gtrsim \log^3N \;.
\end{align}
% Moreover, the matrix $\bB$ is of full rank and its smallest singular value is a constant $\gamma > 0$. 
\end{assumption}
We will use this assumption in the following theorem, restricting our result to the high affinity case. In general, the clustering becomes harder as the affinity increases; see, e.g., \citet[Section 1.3.1]{soltanolkotabi2014robust}. Then, it is natural to assume that $\kappa$ is a constant and \eqref{affi:AS2} holds. We want to also highlight that \eqref{affi:AS2} implies that our subspaces are of generally moderate dimension, which is made precise in \eqref{subdim:AS2} of the assumption. While this is slightly restrictive, it is in line with theoretical results in other subspace clustering literature, and it also simplifies our theoretical analysis. We leave an analysis of the low-to-moderate affinity settings and low-rank subspaces to future work.% As for the requirement imposed on the matrix $\bB$, it arises from our analysis on k-means error bound. Actually, it is not hard to satisfy this requirement. The simplest case is to set $\mathrm{aff}(S_k^*,S_\ell^*)$ for all $1 \le k \neq \ell \le K$ to  be equal. % imposes an additional condition on the singular values of the matrix formed by the affinities between subspaces. For example, when the affinities between subspaces are all equal and satisfy \eqref{affi:re}, 
\begin{thm}\label{thm:init}
Let $\{\bz_i\}_{i=1}^N$ be data points generated according to the semi-random UoS model with parameters $(N,K,\{\bU_k^*\}_{k=1}^K,\bHs)$. Suppose that Assumption \ref{AS:2} holds, $\kappa_d \le \sqrt{\log N}$, and $\kappa_d\kappa_N^2 \lesssim {\sqrt{\log N}}$.
It holds with probability at least $1-N^{-\Omega(1)}$ that
\begin{align}\label{eq:H0}
d_F(\bH^0,\bHs) \lesssim \sqrt{\kappa_d}\kappa_N\frac{\sqrt{N_{\max}}}{\sqrt[4]{\log N}}. 
\end{align}
In particular, if both $\kappa_d$ and $\kappa_N$ are constants and $N$ is sufficiently large, $\bH^0$ satisfies \eqref{H:thm:init}  with probability at least $1-N^{-\Omega(1)}$.  
\end{thm}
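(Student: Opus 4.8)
The plan is to treat the \textbf{TIPS} stage of Algorithm \ref{alg-1} as a spectral clustering procedure and to bound its misclassification error through a mean-field decomposition of the thresholded adjacency matrix $\bA$ defined in \eqref{step:thresholding}. Conditioned on the fixed membership matrix $\bHs$, the randomness resides entirely in the draws $\ba_1,\dots,\ba_N$, and Lemma \ref{lem:p-q} shows that $\E[a_{ij}] \approx b_{k\ell}$ for $i$ in cluster $k$ and $j$ in cluster $\ell$ with $i\neq j$. The natural reference object is therefore the block matrix $\bm{M} = \bHs\bB\bHs^T$ with its diagonal zeroed out to match $a_{ii}=0$ (a rank perturbation of spectral norm $O(1)$). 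Since $\bHs^T\bHs = \bm{D}:=\mathrm{diag}(N_1,\dots,N_K)$, the nonzero spectrum of $\bm{M}$ equals that of $\bm{D}^{1/2}\bB\bm{D}^{1/2}$, and the top-$K$ eigenspace of $\bm{M}$ is exactly the column space of $\bHs$, i.e.\ the vectors constant on each true cluster.

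First I would pin down this ideal spectral structure. Because $\mathrm{aff}(S_k^*,S_k^*)=\sqrt{d_k}$, formula \eqref{bkl} gives diagonal entries $b_{kk}=2-2\Phi(\tau\sqrt{d_k})=\Theta(1)$ under \eqref{tau}, while for $k\neq\ell$ one has $\tau\sqrt{d_kd_\ell}/\mathrm{aff}(S_k^*,S_\ell^*)\ge \sqrt{c}/\kappa$, and the admissible range \eqref{kappa:AS2} of $\kappa$ is precisely what forces the off-diagonals $b_{k\ell}$ to be small enough relative to the diagonal to keep $\bB$ diagonally dominant, hence $\sigma_K(\bB)\gtrsim 1$. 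This yields a spectral gap $\sigma_K(\bm{M})\ge N_{\min}\,\sigma_K(\bB)\gtrsim N_{\min}$. Taking $\bV^*=\bHs\bm{D}^{-1/2}\bm{R}$ with $\bm{R}\in\mO^{K}$ as an orthonormal basis of this eigenspace, its rows are piecewise constant and rows indexed by distinct clusters $k,\ell$ are separated by $\sqrt{N_k^{-1}+N_\ell^{-1}}\gtrsim N_{\max}^{-1/2}$, which is the separation that makes k-means succeed.

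The crux, and the step I expect to be the main obstacle, is bounding $\|\bA-\bm{M}\|$, which I would split into a deterministic bias $\|\E[\bA]-\bm{M}\|$ and a fluctuation $\|\bA-\E[\bA]\|$. The bias is handled entrywise by Lemma \ref{lem:p-q}: the Gaussian approximation $p_{k\ell}\approx b_{k\ell}$ is accurate because the moderate-dimension hypotheses $d_{\min}\gtrsim\log^3 N$ and $\mathrm{aff}(S_k^*,S_\ell^*)\gtrsim\log N$ place each inner product $\langle\bz_i,\bz_j\rangle$ in a near-Gaussian regime, so aggregating the per-entry errors over the block structure keeps this term below $\sigma_K(\bm{M})$ by the polylogarithmic margin seen in \eqref{eq:H0}; the factor $(\log N)^{-1/4}$ originates here. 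The fluctuation term is genuinely delicate because the entries $a_{ij}=\mathbf{1}\{|\langle\bz_i,\bz_j\rangle|\ge\tau\}$ are \emph{not} independent: every entry in row $i$ shares the vector $\ba_i$, so the matrix-Bernstein arguments used for stochastic block models do not apply. My plan is to control the quadratic form $\bx^T(\bA-\E[\bA])\bx$ for a fixed $\bx\in\S^{N-1}$ through a bounded-difference/martingale estimate over the independent draws $\ba_1,\dots,\ba_N$, where the sensitivity to a single $\ba_i$ is governed by the degree $\sum_j a_{ij}$; the moderate dimension forces these degrees to concentrate near $N b_{k\ell}$, and an $\varepsilon$-net over $\S^{N-1}$ followed by a union bound then upgrades this into the spectral-norm control needed to beat the gap.

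Finally I would assemble the pieces by perturbation theory. A Davis--Kahan argument gives $\|\bV\bO-\bV^*\|_F\lesssim\sqrt{K}\,\|\bA-\bm{M}\|/\sigma_K(\bm{M})$ for some $\bO\in\mO^{K}$, and the $(1+\theta)$-approximate k-means guarantee \eqref{k-means} together with the row separation $\gtrsim N_{\max}^{-1/2}$ bounds the number of misclassified points $d_F^2(\bH^0,\bHs)/2$ by a constant multiple of $N_{\max}\|\bV\bO-\bV^*\|_F^2$. Chaining these inequalities with $\sigma_K(\bm{M})\gtrsim N_{\min}=N_{\max}/\kappa_N$ and the noise bound from the previous paragraph produces \eqref{eq:H0} on an event of probability $1-N^{-\Omega(1)}$. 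The concluding ``in particular'' statement then follows by setting $\kappa_d,\kappa_N=\Theta(1)$ and checking that $\sqrt{N_{\max}}/(\log N)^{1/4}$ falls below the radius $(1-\kappa)N_{\min}/(5\kappa_d\sqrt{N})$ required in \eqref{H:thm:init} once $N$ is large.
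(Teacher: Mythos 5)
Your high-level skeleton is the same as the paper's: compare $\bA$ to the population block matrix built from $\bB$, lower-bound its $K$-th singular value by $N_{\min}\sigma_K(\bB)\gtrsim N_{\min}$ via diagonal dominance (Lemma \ref{lem:sing-B}), control the bias $\|\E[\bA]-\bHs\bB\bHs^T\|$ entrywise via Lemma \ref{lem:p-q}, and finish with Davis--Kahan, the row separation $\sqrt{N_k^{-1}+N_\ell^{-1}}$, and the $(1+\theta)$-approximate k-means guarantee (the paper does this by invoking Lemmas 5.1, 2.1, and 5.3 of \citet{lei2015consistency}). The gap is in the one step you yourself call the crux: the bound on $\|\bA-\E[\bA]\|$, and your proposed argument for it does not work.

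A bounded-difference/$\varepsilon$-net scheme cannot beat the trivial bound $\|\bA-\E[\bA]\|=O(N)$, and $O(N)$ is not good enough here. Concretely: resampling a single $\ba_i$ can flip up to $\Theta(N)$ entries of row and column $i$ (the graph is dense, since $b_{kk}\ge 2-2\Phi(\sqrt{c})$ is a constant), so by Cauchy--Schwarz the change in $\bx^T(\bA-\E[\bA])\bx$ for a unit vector $\bx$ is of order $|x_i|\sqrt{N}$; hence $\sum_i c_i^2=\Theta(N)$ and McDiarmid gives only the tail $\exp(-ct^2/N)$ per direction. A net of $\S^{N-1}$ has $e^{\Omega(N)}$ points, so the union bound forces $t\gtrsim N$ --- exactly the trivial bound, with an uncontrolled constant. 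Since the spectral gap itself is $N_{\min}\gamma=\Theta(N)$ when $K,\kappa_N$ are constants, this yields no misclassification rate at all, let alone the $N/\sqrt{\log N}$ rate behind \eqref{eq:H0}; degree concentration does not rescue the argument because the worst-case sensitivity $|x_i|\sqrt{N}$ is what enters, not the mean degree. What the paper actually does (Proposition \ref{prop:spec-gap}) is bound $\|\Delta\|^2=\|\Delta^2\|$ entrywise: diagonal entries of $\Delta^2$ by Hoeffding conditioned on $\bz_i$, and off-diagonal entries $(\Delta^2)_{ij}=\sum_k\delta_{ik}\delta_{jk}$ by Hoeffding conditioned on $(\bz_i,\bz_j)$ \emph{plus} the key covariance estimate of Lemma \ref{lem:ex-delta-ijk}, namely $\left|\E[a_{ik}a_{jk}\mid\bz_i,\bz_j]-\E[a_{ik}\mid\bz_i]\,\E[a_{jk}\mid\bz_j]\right|\lesssim \kappa_d/\sqrt{\log N}$, proved by decomposing $\ba_k$ along and orthogonal to the direction of $\bU_\ell^{*^T}\bz_j$. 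This near-orthogonality of distinct rows of $\Delta$ is the sole source of the gain: it gives $\|\Delta^2\|\lesssim N+\kappa_d N^2/\sqrt{\log N}$, hence $\|\Delta\|\lesssim\sqrt{\kappa_d}\,N/(\log N)^{1/4}$. Your proposal contains no ingredient playing this role; indeed you attribute the $(\log N)^{-1/4}$ in \eqref{eq:H0} to the Gaussian-approximation bias, whereas in the paper the bias contributes $O(\kappa_d N_{\max}/\sqrt{\log N})$ and the $(\log N)^{-1/4}$ comes from the fluctuation term, which dominates. Without the covariance-decay estimate (or an equivalent conditional-variance control in a martingale version of your argument), the proof does not close.
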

To put the above results in perspective, we make some remarks. First, according to the fact that $d_F^2(\bH^0,\bHs)/2$ denotes the number of misclassified data points, the bound \eqref{eq:H0} implies that the TIPS method only misclassifies $O(N/\sqrt{\log N})$ points when $\kappa_d,\kappa_N$ are constants and the normalized subspace affinity is $O(1)$. This automatically satisfies \eqref{H:thm:init}, which requires the number of misclassified points to be $O(N)$ when $\kappa_d,\kappa_N$ are constants. Second, we believe that the recovery error bound \eqref{eq:H0} can be improved by enhancing the spectral bound in Proposition \ref{prop:spec-gap}. This is left for future research.

\section{Proofs of Main Results}\label{sec:pf-main}

In this section, we sketch the proofs of the theorems in Section \ref{sec:preli}. The complete proofs can be found in Sections \ref{appenSec:pf-init}, \ref{appenSec:pfUpdate}, and \ref{appenSec:pfAssign} of the appendix. % Moreover, let $n_{k\ell}=|\mC_k \cap \mC_\ell^*|$ for all $k, \ell \in [K]$.   of the lemmas and propositions

\subsection{Analysis of Initialization Method}\label{subsec:init}

In this subsection, our goal is to establish a recovery error bound of the TIPS method.
To begin, we estimate the connection probability of data points (i.e., the probability of $a_{ij}=1$ in \eqref{step:thresholding}) according to their memberships after the thresholding procedure \eqref{step:thresholding}. Moreover, we show that the connection probability of data points in the same subspace is larger than that of data points in different subspaces. 
\begin{lemma}\label{lem:p-q}
Consider the setting in Theorem \ref{thm:init}.  Let $p_{k\ell} \in \R$ denote the connection probability between any pair of data points that respectively belong to the subspaces $S_k^*$ and $S_\ell^*$ for all $1 \le k, \ell \le K$. % For any pair of data points that belong to subspaces $S_k^*$ and $S_\ell^*$ for any $1 \le k, \ell \le K$, they are connected with probability $p_{k\ell} \in \R$ such that  
Then, it holds for all $1 \le k, \ell \le K$ that
\begin{align}\label{pkl-bkl}
\left| p_{k\ell} - b_{k\ell} \right| \lesssim \kappa_d/{\sqrt{\log N}},
\end{align}
where $b_{k\ell}$ is defined in \eqref{bkl}, and
\begin{align}\label{pq:gap}
p_{kk} - p_{k\ell} \gtrsim 1/{\sqrt{\kappa_d}}.
\end{align}
% Then, the following statements hold: \\
%(i) For any pair of data points that belong to the same subspace $S_k^*$ for any $k \in [K]$,  they are connected with probability $p_k$ such that
%\begin{align}\label{p:value}
%%p \in \left[2 - 2\Phi\left(\tau (\sqrt{d_k}+\alpha)\right) - 2N^{-2}, \right.  \notag\\
%%\left. 2 - 2\Phi\left(\tau (\sqrt{d}-\alpha) \right) + 2N^{-2} \right].
% & 2 - 2\Phi\left(\tau (\sqrt{d_k}+\alpha) \right) - 2N^{-2} \le p_k \notag \\
% & \le  2 - 2\Phi\left(\tau (\sqrt{d_k}-\alpha) \right) + 2N^{-2}.
%\end{align}
%(ii) For any pair of data points that  belong to different subspaces $S_k^*$ and $S_\ell^*$ for any $1 \le k \neq \ell \le K$, they are connected with probability $q_{k\ell} \in \R$ such that  
%\begin{align}\label{q-value}
%& 2 - 2\Phi\left(\frac{\tau (\sqrt{d_k}+\alpha)(\sqrt{d_\ell}+\alpha)}{\mathrm{aff}(S_k^*,S_\ell^*)-\alpha}\right) - 4N^{-2} \le q_{k\ell} \notag \\
%& \le   2 - 2\Phi\left(\frac{\tau (\sqrt{d_k}-\alpha)(\sqrt{d_\ell}-\alpha)}{\mathrm{aff}(S_k^*,S_\ell^*)+\alpha}\right) + 4N^{-2}. 
%\end{align}
\end{lemma}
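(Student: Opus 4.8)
The plan is to evaluate $p_{k\ell}$ by a two–stage Gaussian approximation and then control the two resulting error terms. Fix one point $\bz_i=\bU_k^*\ba_i$ lying in $S_k^*$ and one point $\bz_j=\bU_\ell^*\ba_j$ lying in $S_\ell^*$, so that $\langle\bz_i,\bz_j\rangle=\ba_i^T\bm{M}\ba_j$ with $\bm{M}:=(\bU_k^*)^T\bU_\ell^*\in\R^{d_k\times d_\ell}$, whose singular values are precisely $\sigma_{k\ell}^{(1)}\ge\cdots\ge 0$, so that $\mathrm{tr}(\bm{M}^T\bm{M})=\sum_m(\sigma_{k\ell}^{(m)})^2=\mathrm{aff}(S_k^*,S_\ell^*)^2$. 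Conditioning on $\ba_j$ and using rotational invariance of $\mathrm{Unif}(\S^{d_k-1})$, I would write $\ba_i^T\bm{M}\ba_j=\|\bm{M}\ba_j\|\,u$, where $u=\langle\ba_i,\bm{M}\ba_j/\|\bm{M}\ba_j\|\rangle$ is the projection of a uniform point on $\S^{d_k-1}$ onto a fixed unit direction. Thus $p_{k\ell}=\E_{\ba_j}\big[\P(|u|\ge\tau/\|\bm{M}\ba_j\|\mid\ba_j)\big]$, and the two things I must control are the law of $u$ against a Gaussian and the concentration of $\|\bm{M}\ba_j\|$.

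First I would use the exact marginal density $\propto(1-u^2)^{(d_k-3)/2}$ of a single coordinate of $\mathrm{Unif}(\S^{d_k-1})$ to show that $\sqrt{d_k}\,u$ lies within Kolmogorov distance $O(1/d_k)$ of $N(0,1)$, whence uniformly over the fixed direction
\begin{equation*}
\P\!\left(|u|\ge\frac{\tau}{\|\bm{M}\ba_j\|}\;\Big|\;\ba_j\right)=2-2\Phi\!\left(\frac{\tau\sqrt{d_k}}{\|\bm{M}\ba_j\|}\right)+O(1/d_k),
\end{equation*}
and $d_{\min}\gtrsim\log^3N$ from \eqref{subdim:AS2} makes the remainder negligible. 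Next, writing $\ba_j=\bg/\|\bg\|$ with $\bg\sim N(\b0,\bI_{d_\ell})$, I have $\|\bm{M}\ba_j\|^2=\bg^T\bm{M}^T\bm{M}\bg/\|\bg\|^2$ with mean $\mathrm{aff}(S_k^*,S_\ell^*)^2/d_\ell$. Applying the Hanson--Wright inequality to $\bg^T\bm{M}^T\bm{M}\bg$ and $\chi^2$-concentration to $\|\bg\|^2$, the fluctuation of the quadratic form is governed by $\|\bm{M}^T\bm{M}\|_F=\sqrt{\sum_m(\sigma_{k\ell}^{(m)})^4}$; since $\sigma_{k\ell}^{(m)}\le 1$ this is at most $\mathrm{aff}(S_k^*,S_\ell^*)$, so the relative deviation is $O(\sqrt{\log N}/\mathrm{aff}(S_k^*,S_\ell^*))=O(1/\sqrt{\log N})$ by \eqref{affi:AS2}, with probability $1-N^{-\Omega(1)}$.

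On the event where $\|\bm{M}\ba_j\|^2=(1+\delta)\,\mathrm{aff}(S_k^*,S_\ell^*)^2/d_\ell$ with $|\delta|\lesssim 1/\sqrt{\log N}$, a first-order expansion of $x\mapsto 2-2\Phi(x)$ gives
\begin{equation*}
\left|2-2\Phi\!\left(\frac{\tau\sqrt{d_k}}{\|\bm{M}\ba_j\|}\right)-b_{k\ell}\right|\le 2\Phi'(\xi)\cdot\frac{\tau\sqrt{d_kd_\ell}}{\mathrm{aff}(S_k^*,S_\ell^*)}\cdot\left|\frac{1}{\sqrt{1+\delta}}-1\right|\lesssim|\delta|,
\end{equation*}
where $\xi$ lies between the two arguments and I use $\sup_{x\ge0}x\,\Phi'(x/2)<\infty$ so that the product of the (possibly large) argument $\tau\sqrt{d_kd_\ell}/\mathrm{aff}$ with $\Phi'(\xi)$ stays bounded. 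Controlling the complementary event by the trivial bound $2$ (it has probability $N^{-\Omega(1)}$) and combining with the first step yields \eqref{pkl-bkl}, in fact with the sharper rate $O(1/\sqrt{\log N})\le\kappa_d/\sqrt{\log N}$; the diagonal case $k=\ell$ is exactly $\bm{M}=\bI_{d_k}$, $\delta\equiv 0$. For the separation \eqref{pq:gap} it then suffices to bound $b_{kk}-b_{k\ell}$ from below. Here $\mathrm{aff}(S_k^*,S_k^*)=\sqrt{d_k}$ makes the argument of $b_{kk}$ equal to $s:=\tau\sqrt{d_k}$, while $\mathrm{aff}(S_k^*,S_\ell^*)\le\kappa\min\{\sqrt{d_k},\sqrt{d_\ell}\}$ forces the argument of $b_{k\ell}$ to be at least $s/\kappa$, so
\begin{equation*}
b_{kk}-b_{k\ell}\ge 2\big(\Phi(s/\kappa)-\Phi(s)\big)\ge 2\,s\,\tfrac{1-\kappa}{\kappa}\,\Phi'(s/\kappa).
\end{equation*}
Since $\tau=\sqrt{c}/\sqrt{d_{\max}}$ gives $s=\sqrt{c}\,\sqrt{d_k/d_{\max}}\in[\sqrt{c/\kappa_d},\sqrt{c}]$ and $s/\kappa\le\sqrt{c}/\kappa=O(1)$, the right-hand side is $\gtrsim 1/\sqrt{\kappa_d}$; as $\kappa_d$ is a constant, this dominates the $O(1/\sqrt{\log N})$ error from \eqref{pkl-bkl}, establishing \eqref{pq:gap}.

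The main obstacle is the second step: obtaining the relative concentration of $\|\bm{M}\ba_j\|^2$ at the sharp $1/\sqrt{\log N}$ rate, which relies on the inequality $\|\bm{M}^T\bm{M}\|_F\le\mathrm{aff}(S_k^*,S_\ell^*)$ (valid because every singular value of $\bm{M}$ is at most $1$) together with \eqref{affi:AS2}, and on passing from a high-probability deviation bound to a bound on the full expectation $\E_{\ba_j}[\cdot]$ by absorbing the rare event into the trivial bound. By comparison, the one-dimensional Berry--Esseen estimate of the first step and the bounded-product trick $\sup_{x\ge0}x\,\Phi'(x/2)<\infty$ used in the third step are comparatively routine.
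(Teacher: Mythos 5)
Your proposal is correct, and it follows the same high-level skeleton as the paper (condition on the second point, Gaussian-approximate the one-dimensional projection, concentrate the effective norm $\|\bm{M}\ba_j\|$ around $\mathrm{aff}(S_k^*,S_\ell^*)/\sqrt{d_\ell}$, then control the resulting $\Phi$-difference using the boundedness of $x\mapsto x\,e^{-x^2/2}$), but it differs in the concrete tools and, more substantially, in the proof of \eqref{pq:gap}. For the projection, the paper writes $\ba_i=\bba_i/\|\bba_i\|$ with $\bba_i$ Gaussian so the inner product is exactly Gaussian and the error enters as an $\alpha=2\sqrt{\log N}+2$ perturbation of the $\Phi$-argument (Lemmas \ref{lem:norm-gaus}--\ref{lem:prob-cond}), whereas you invoke the exact sphere-marginal density to get a Kolmogorov bound $O(1/d_k)$; for the norm, the paper uses its Lipschitz-concentration lemma while you use Hanson--Wright plus $\chi^2$ concentration, with the nice observation $\|\bm{M}^T\bm{M}\|_F\le\mathrm{aff}(S_k^*,S_\ell^*)$ — both yield relative error $O(1/\sqrt{\log N})$ via \eqref{affi:AS2} and \eqref{subdim:AS2}, but your bookkeeping actually gives \eqref{pkl-bkl} without the $\kappa_d$ factor, slightly sharper than the paper's $\kappa_d/\sqrt{\log N}$ (the paper loses a $\sqrt{d_kd_\ell}/d_{\min}$ in its $\Phi$-difference estimate). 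The bigger divergence is \eqref{pq:gap}: the paper never passes through $\bB$; it works directly with the sandwich bounds on $p_{kk}$ and $p_{k\ell}$ and splits into two cases according to whether $\mathrm{aff}(S_k^*,S_\ell^*)$ exceeds a threshold, while you reduce to $b_{kk}-b_{k\ell}$ via \eqref{pkl-bkl} and exploit that the argument of $b_{k\ell}$ is at least $1/\kappa$ times $s=\tau\sqrt{d_k}\in[\sqrt{c/\kappa_d},\sqrt{c}]$, which eliminates the case analysis entirely. Two caveats you should make explicit: the $O(1/d)$ Kolmogorov bound for sphere marginals needs an actual proof or citation (it is classical, e.g.\ Diaconis--Freedman-type estimates, and comparable in weight to the paper's Lemma \ref{lem:norm-gaus}); and your gap constant $2s\,\frac{1-\kappa}{\kappa}\,\Phi'(s/\kappa)$ is positive only because Assumption \ref{AS:2} treats $\kappa$ as a fixed constant strictly below the bound \eqref{kappa:AS2}, which is itself below $1/\sqrt{\kappa_d}\le 1$ — so the hidden constant depends on $c$ and $\kappa$, the same kind of dependence the paper's proof incurs.
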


Under Assumption \ref{AS:2}, we can show that the approximate connection matrix $\bB$ is non-degenerate, which is crucial for the analysis of the k-means error bound. 
\begin{lemma}\label{lem:sing-B}
Consider the setting in Theorem \ref{thm:init}. The matrix $\bB$ defined in \eqref{bkl} is of full rank and its smallest singular value $\gamma$ satisfies $\gamma \ge 1 - \Phi(\sqrt{c})$, where $c$ is the constant in Assumption \ref{AS:2}.   
\end{lemma}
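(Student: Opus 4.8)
The plan is to exploit the fact that $\bB$ is symmetric and essentially diagonal, so that a Gershgorin/diagonal-dominance argument directly yields both full rank and the desired lower bound on the smallest singular value. Since $\bB$ is symmetric, its singular values are the absolute values of its eigenvalues, and once I show $\bB$ is positive definite it suffices to lower bound $\lambda_{\min}(\bB)$.

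First I would evaluate the diagonal entries. Because the singular values of $(\bU_k^*)^T\bU_k^* = \bI_{d_k}$ are all equal to one, we have $\mathrm{aff}(S_k^*,S_k^*) = \sqrt{d_k}$, and with $\tau = \sqrt{c}/\sqrt{d_{\max}}$ this gives $b_{kk} = 2 - 2\Phi\left(\tau\sqrt{d_k}\right) = 2 - 2\Phi\left(\sqrt{c}\,\sqrt{d_k/d_{\max}}\right)$. Since $d_k \le d_{\max}$ and $\Phi$ is increasing, this yields the lower bound $b_{kk} \ge 2\left(1 - \Phi(\sqrt{c})\right)$.

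Next I would upper bound the off-diagonal entries. By the definition of the normalized affinity together with $\kappa = \max_{k\neq\ell}\overline{\mathrm{aff}}(S_k^*,S_\ell^*)$, we have $\mathrm{aff}(S_k^*,S_\ell^*) \le \kappa\min\{\sqrt{d_k},\sqrt{d_\ell}\}$, whence $\frac{\tau\sqrt{d_kd_\ell}}{\mathrm{aff}(S_k^*,S_\ell^*)} \ge \frac{\tau\max\{\sqrt{d_k},\sqrt{d_\ell}\}}{\kappa} \ge \frac{\sqrt{c}}{\kappa\sqrt{\kappa_d}}$, using $\max\{\sqrt{d_k},\sqrt{d_\ell}\} \ge \sqrt{d_{\min}}$ and $d_{\min} = d_{\max}/\kappa_d$. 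The crucial observation is that the upper bound \eqref{kappa:AS2} on $\kappa$ is engineered precisely so that $\frac{\sqrt{c}}{\kappa\sqrt{\kappa_d}} > \Phi^{-1}\left(1 - \frac{1-\Phi(\sqrt{c})}{2(K-1)}\right)$. Applying the increasing function $\Phi$ and substituting into \eqref{bkl} then gives $b_{k\ell} < \frac{1-\Phi(\sqrt{c})}{K-1}$ for every $k\neq\ell$, while $b_{k\ell} > 0$ trivially.

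Finally, combining these bounds, the off-diagonal row sum obeys $\sum_{\ell\neq k} b_{k\ell} < (K-1)\cdot\frac{1-\Phi(\sqrt{c})}{K-1} = 1 - \Phi(\sqrt{c})$, so each Gershgorin disc is centered at $b_{kk}\ge 2(1-\Phi(\sqrt{c}))$ with radius strictly below $1-\Phi(\sqrt{c})$. Hence every eigenvalue of the symmetric matrix $\bB$ exceeds $b_{kk} - \sum_{\ell\neq k} b_{k\ell} > 1 - \Phi(\sqrt{c}) > 0$, so $\bB$ is positive definite, in particular of full rank, and $\gamma = \lambda_{\min}(\bB) \ge 1 - \Phi(\sqrt{c})$. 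I expect the only delicate step to be the off-diagonal bound, namely verifying that the somewhat opaque threshold in \eqref{kappa:AS2} is exactly what forces each row's off-diagonal mass below $1-\Phi(\sqrt{c})$; the remaining steps are monotonicity of $\Phi$ and a routine application of the Gershgorin circle theorem.
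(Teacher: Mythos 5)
Your proposal is correct and rests on exactly the same ingredients as the paper's proof: the diagonal lower bound $b_{kk} \ge 2\left(1-\Phi(\sqrt{c})\right)$ from $\tau = \sqrt{c}/\sqrt{d_{\max}}$, and the off-diagonal upper bound obtained by combining $\mathrm{aff}(S_k^*,S_\ell^*) \le \kappa \min\{\sqrt{d_k},\sqrt{d_\ell}\}$ with the threshold on $\kappa$ in \eqref{kappa:AS2}, which forces each row's off-diagonal sum below $1-\Phi(\sqrt{c})$. The only difference is cosmetic: the paper packages the diagonal-dominance argument by splitting $\bB = \bB_1 + \bB_2$ into a diagonally dominant positive semidefinite part plus a diagonal part and then adding smallest eigenvalues, whereas you apply the Gershgorin circle theorem directly to $\bB$ — an equivalent and, if anything, slightly more streamlined way to reach the same bound $\gamma \ge 1-\Phi(\sqrt{c})$.
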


Next, we present a spectral bound on the deviation of $\bA$ from its mean.    % depend on $\bz_i$, while they are independent conditioned on $\bz_i$.  
\begin{prop}\label{prop:spec-gap}
Consider the setting in Theorem \ref{thm:init}. Then, it holds with probability at least  $1-6K^2N^{-1}$  that
\begin{align}\label{specgap}
\|\bA - \E[\bA]\| \lesssim \frac{\sqrt{\kappa_d} N}{\sqrt[4]{\log N}}.
\end{align}
\end{prop}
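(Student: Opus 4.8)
The plan is to separate the genuine source of dependence among the entries of $\bA$ from a remainder that concentrates well. The entries $a_{ij}=\mathbf{1}\{|\langle\bz_i,\bz_j\rangle|\ge\tau\}$ are \emph{not} independent, since $a_{ij}$ and $a_{ik}$ both involve $\bz_i$; however, \emph{conditioned on} $\bz_i$ the entries of row $i$ are independent across $j$, as they depend on the remaining independent vectors. This suggests a Hoeffding-type (ANOVA) decomposition of the kernel around its mean. Writing $q_i^{(\ell)}=\P(a_{ij}=1\mid\bz_i)$ for $j\in\mC_\ell$ (a function of $\bz_i$ alone) and recalling $\E[a_{ij}]=p_{k\ell}$ from Lemma \ref{lem:p-q}, I would write, for $i\in\mC_k$, $j\in\mC_\ell$ with $i\ne j$,
\begin{align*}
a_{ij}-p_{k\ell}=\big(q_i^{(\ell)}-p_{k\ell}\big)+\big(q_j^{(k)}-p_{k\ell}\big)+r_{ij},
\end{align*}
where $r_{ij}=a_{ij}-q_i^{(\ell)}-q_j^{(k)}+p_{k\ell}$ satisfies $\E[r_{ij}\mid\bz_i]=\E[r_{ij}\mid\bz_j]=0$. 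Collecting terms (and absorbing the $O(1)$ diagonal correction) yields $\bA-\E[\bA]=\bm{L}+\bm{L}^T+\bm{R}$, where the ``degree'' matrix $\bm{L}$, with $(i,j)$ entry $q_i^{(\ell(j))}-p_{k(i)\ell(j)}$, is constant across the columns within each block and hence has rank at most $K$, while $\bm{R}=(r_{ij})$ is the degenerate remainder.

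For the degree part, which I expect to dominate, I would use $\mathrm{rank}(\bm{L}+\bm{L}^T)\le 2K$ to pass to the Frobenius norm, $\|\bm{L}+\bm{L}^T\|\le 2\|\bm{L}\|_F$, and then control
\begin{align*}
\|\bm{L}\|_F^2=\sum_{k=1}^K\sum_{i\in\mC_k}\sum_{\ell=1}^K N_\ell\big(q_i^{(\ell)}-p_{k\ell}\big)^2.
\end{align*}
The crux is a pointwise fluctuation bound on $q_i^{(\ell)}$. For $k=\ell$ it is deterministic (since $\|\ba_i\|=1$) and contributes nothing; for $k\ne\ell$ one writes $\langle\bz_i,\bz_j\rangle=\langle(\bU_\ell^*)^T\bU_k^*\ba_i,\ba_j\rangle$ and, conditioned on $\bz_i$, approximates this by a Gaussian of variance $\|(\bU_\ell^*)^T\bU_k^*\ba_i\|^2/d_\ell$. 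The fluctuations of $q_i^{(\ell)}$ are then driven by the quadratic form $\|(\bU_\ell^*)^T\bU_k^*\ba_i\|^2$, whose concentration on the sphere, together with the Gaussian-approximation error already quantified in Lemma \ref{lem:p-q}, I would show yields $|q_i^{(\ell)}-p_{k\ell}|\lesssim\sqrt{\kappa_d}/\sqrt[4]{\log N}$ with high probability. Substituting gives $\|\bm{L}\|_F^2\lesssim\kappa_d N^2/\sqrt{\log N}$, i.e. $\|\bm{L}+\bm{L}^T\|\lesssim\sqrt{\kappa_d}N/\sqrt[4]{\log N}$, matching the target.

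For the degenerate remainder I would bound $\|\bm{R}\|$ by an $\epsilon$-net argument over the sphere combined with concentration that exploits the conditional independence and the double degeneracy $\E[r_{ij}\mid\bz_i]=\E[r_{ij}\mid\bz_j]=0$: for a fixed net point $\bx$ the form $\sum_{i\ne j}x_ix_jr_{ij}$ is a degenerate U-statistic with $O(1)$ variance proxy, so a decoupling/trace-method argument for conditionally-mean-zero bounded entries gives $\|\bm{R}\|\lesssim\sqrt{N}\,\mathrm{polylog}(N)$, which is dominated by the degree part. Combining the two bounds via the triangle inequality and collecting the failure probabilities by a union bound over the $K^2$ block pairs, the sphere-concentration events, and the net produces the stated $1-6K^2N^{-1}$ probability and the bound $\|\bA-\E[\bA]\|\lesssim\sqrt{\kappa_d}N/\sqrt[4]{\log N}$. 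The main obstacle is the pointwise fluctuation bound on $q_i^{(\ell)}$ in the second step: quantifying how the conditional connection probability varies with $\bz_i$ requires a Berry--Esseen-type Gaussian approximation for the inner product together with sharp control of the quadratic form $\|(\bU_\ell^*)^T\bU_k^*\ba_i\|^2$ under the regime $\mathrm{aff}(S_k^*,S_\ell^*)\gtrsim\log N$ and $d_{\min}\gtrsim\log^3 N$ of Assumption \ref{AS:2}; this is also where I would expect the bound to be loosest and hence improvable.
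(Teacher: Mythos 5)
Your decomposition $\bA-\E[\bA]=\bm{L}+\bm{L}^T+\bm{R}$ (H\'ajek projection plus doubly degenerate remainder) is a genuinely different organization from the paper's proof, which never splits off the degree part: the paper bounds $\|\bA-\E[\bA]\|^2$ by estimating the entries of $(\bA-\E[\bA])^2$ directly, via Hoeffding conditioned on $\bz_i$ for the diagonal entries and Hoeffding conditioned on $(\bz_i,\bz_j)$ together with a conditional-covariance estimate (Lemma \ref{lem:ex-delta-ijk}) for the off-diagonal entries. Your treatment of $\bm{L}$ is sound: $q_i^{(\ell)}$ is constant on diagonal blocks by rotational invariance, and the pointwise fluctuation bound on $|q_i^{(\ell)}-p_{k\ell}|$ follows from the machinery of Lemmas \ref{lem:prob-cond}, \ref{lem:norm-unit} and \ref{lem:p-q} (one in fact gets $\lesssim \kappa_d/\sqrt{\log N}$, which suffices since $\kappa_d\le\sqrt{\log N}$). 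The gap is entirely in the remainder step, and it is fatal as written.

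Boundedness plus double degeneracy does \emph{not} imply $\|\bm{R}\|\lesssim\sqrt{N}\,\mathrm{polylog}(N)$, and the net-plus-decoupling argument you sketch cannot establish it. Counterexample: with i.i.d.\ signs $\epsilon_1,\dots,\epsilon_N$, the kernel $r_{ij}=\epsilon_i\epsilon_j$ is bounded and satisfies $\E[r_{ij}\mid\epsilon_i]=\E[r_{ij}\mid\epsilon_j]=0$, yet the matrix $(r_{ij})_{i\neq j}$ has spectral norm $N-1$. For any \emph{fixed} unit vector $\bx$ the form $\sum_{i\neq j}x_ix_jr_{ij}=\left(\sum_i x_i\epsilon_i\right)^2-1$ is indeed $O(1)$ with exponential tails, but those tails decay only like $e^{-ct}$, so a union bound over the $e^{cN}$ points of a net certifies nothing better than $O(N)$; this is precisely the failure mode created by dependence through shared variables, which is the whole difficulty of Proposition \ref{prop:spec-gap}. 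Moreover, in this specific problem $\bm{R}$ is not a lower-order term: the entries $r_{ik}$ and $r_{jk}$ share the vector $\bz_k$, so $\E[r_{ik}r_{jk}\mid\bz_i,\bz_j]\neq 0$ in general, and bounding this two-point conditional covariance is the real technical core of the paper's proof --- Lemma \ref{lem:ex-delta-ijk}, which requires decomposing $\ba_k$ along $\tilde{\bv}_j$ and its orthogonal complement and a careful Gaussian approximation under \eqref{affi:AS2} and \eqref{subdim:AS2}. Even granting that estimate ($\lesssim\kappa_d/\sqrt{\log N}$), running a Gram/trace argument on $\bm{R}$ (entries of $\bm{R}\bm{R}^T$ are $\sum_k r_{ik}r_{jk}$) yields $\|\bm{R}\|\lesssim\sqrt{\kappa_d}\,N/\sqrt[4]{\log N}$ --- the same order as your bound for $\bm{L}$ and as the target. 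So the degree part does not dominate; the degenerate part carries the main difficulty, and your proposal relocates that difficulty into a step whose claimed justification is false in the stated generality.
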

Despite that this bound seems large, it is sufficient for proving \eqref{eq:H0}. A key observation is that the entries in the $i$-th column of $\bA$ are independent conditioned on $\bz_i$, while they are dependent. This plays an important role in our analysis. Compared to the results in \citet[Theorem 5.2]{lei2015consistency}, this lemma provides a spectral bound for an adjacency matrix without independence structure, which could be of independent interest. % Moreover, we believe that the bound \eqref{specgap} shall be further improved by refining the probabilistic arguments.  

Equipped with Proposition \ref{prop:spec-gap}, Assumption \ref{AS:2}, Lemma \ref{lem:sing-B}, and \citet[Lemmas 5.1, 5.3]{lei2015consistency}, we can prove Theorem \ref{thm:init}. The complete proof is provided in Section \ref{appenSubsec:pfThmInit} of the appendix. 

\subsection{Analysis of Subspace Update Step}\label{subsec:update}

In this subsection, we analyze convergence behavior of the subspace update step in the KSS iterations.
For ease of exposition, let us introduce some further notation. Given an $\bH \in \mM^{N\times K}$, let $\mC_k=\{i\in [N]:h_{ik} = 1\}$ and $n_k=|\mC_k|$ for all $k \in [K]$. Given $\mC_1,\dots,\mC_K$, let
\begin{align}\label{def:Bkl}
n_{k\ell}=|\mC_k \cap \mC_\ell^*|,\quad \bm{\Psi}_{k\ell} = \frac{1}{n_{k\ell}} \sum_{i\in \mC_k \cap \mC_\ell^*} \ba_i \ba_i^T 
\end{align}
for all $k,\ell \in [K]$, where $\ba_i$ for all $i\in [N]$ are given in the UoS model. Given a permutation $\pi:[K] \rightarrow [K]$ and a partition  $\{\mC_1,\dots,\mC_K\}$ of $[N]$ represented by $\bH \in \mM^{N\times K}$, we define the maximum of the number of misclassified points in $\mC_{k}$ w.r.t. $\mC_{\pi^{-1}(k)}^*$ and that in $\mC^*_{k}$ w.r.t. $\mC_{\pi(k)}$ as
\begin{align}\label{def:Gk}
%  W_k(\bH) = \max\left\{ \sum_{\ell:\ell\neq \pi^{-1}(k)} n_{k\ell}, \sum_{\ell:\ell\neq \pi(k)} n_{\ell k} \right\}. \\
W_k(\bH) = \max\left\{ |\mC_{k} \setminus \mC_{\pi^{-1}(k)}^* |, |\mC_{k}^*  \setminus \mC_{\pi(k)} | \right\}.
\end{align}
To begin, we present a lemma that estimates the singular values of $\bm{\Psi}_{k\ell}$ for all $1 \le k \neq \ell \le K$. 

\begin{lemma}\label{lem:spec-B-kl}
Suppose that $\pi:[K]\rightarrow [K]$ is a permutation, $N_k \gtrsim  d_k \gtrsim \log N$ for all $k\in [K]$, and $\bH \in \mM^{N\times K}$ satisfies
\begin{align}\label{n-1}
W_k(\bH) \le \frac{1}{8}N_{\min}\ \text{for all}\ k \in [K].
\end{align}
It holds with probability at least $1-2K /({d_{\min}^2 N})$ for all $1 \le k \neq \ell \le K$ that 
\begin{align}
& \left|\sigma_i\left(\bm{\Psi}_{\pi(k)k}\right) - \frac{1}{ d_k} \right| \le \frac{1}{32d_k}\ \ \text{for all}\ i \in [d_k], \label{rst1:lem-spec-B-kl}\\
& \sigma_1\left(\bm{\Psi}_{\pi(k)\ell}\right) \le  \frac{1}{d_\ell} + \frac{5c_1}{4d_\ell}\left( \sqrt{\frac{d_\ell}{n_{\pi(k)\ell}}} + \frac{d_\ell}{n_{\pi(k)\ell}}\right),\label{rst2:lem-spec-B-kl}
\end{align}
where $c_1 > 0$ is an absolute constant. 
\end{lemma}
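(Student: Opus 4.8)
The plan is to recognize each $\bm{\Psi}_{\pi(k)\ell}$ as a rescaled empirical second-moment matrix of i.i.d.\ uniform spherical vectors and to control its deviation from its mean by a covariance-concentration argument. For $i \in \mC_{\pi(k)}\cap \mC_\ell^*$ the points lie in the true subspace $S_\ell^*$, so their coordinate vectors $\ba_i$ are i.i.d.\ $\mathrm{Unif}(\S^{d_\ell-1})$ with $\E[\ba_i\ba_i^T] = \tfrac{1}{d_\ell}\bI$; hence $\E[\bm{\Psi}_{\pi(k)\ell}] = \tfrac{1}{d_\ell}\bI_{d_\ell}$, and both claims reduce to bounding $\|\bm{\Psi}_{\pi(k)\ell}-\tfrac{1}{d_\ell}\bI\|$. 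I would first rescale by setting $\by_i = \sqrt{d_\ell}\,\ba_i$, so that the $\by_i$ are isotropic and uniformly bounded ($\|\by_i\|=\sqrt{d_\ell}$), hence sub-gaussian with an absolute sub-gaussian constant. Note that since the lemma fixes $\bH$, the index set $\mC_{\pi(k)}\cap\mC_\ell^*$ is deterministic and only the values $\ba_i$ are random, giving a clean i.i.d.\ concentration problem with a fixed sample size $n:=n_{\pi(k)\ell}$.

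To this end I would apply a covariance-estimation bound for isotropic sub-gaussian vectors (of Vershynin type, or equivalently matrix Bernstein applied to $\by_i\by_i^T-\bI$), which gives, with probability at least $1-2e^{-u}$,
\[
\left\| \frac{1}{n}\sum_{i} \by_i\by_i^T - \bI \right\| \lesssim \sqrt{\frac{d_\ell+u}{n}} + \frac{d_\ell+u}{n};
\]
dividing by $d_\ell$ yields the corresponding bound on $\|\bm{\Psi}_{\pi(k)\ell}-\tfrac{1}{d_\ell}\bI\|$. Choosing the tail parameter $u\asymp \log N$ makes the per-matrix failure probability polynomially small, of order $1/(d_{\min}^2 N)$. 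The \emph{key point} of the whole argument is the hypothesis $d_\ell \gtrsim \log N$: it lets me take $u \le d_\ell/4$, so that $d_\ell + u \le \tfrac54 d_\ell$ and the clean form survives with the stated constant $\tfrac{5c_1}{4}$, namely $\tfrac{1}{d_\ell}\bigl(\sqrt{d_\ell/n}+d_\ell/n\bigr)$ up to that factor. Without $d\gtrsim\log N$ one pays an extra $\sqrt{\log N}$ factor and cannot match the stated bound, so this absorption step is where I expect the main difficulty to lie.

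From here the two statements fall out by specializing $n$. For \eqref{rst2:lem-spec-B-kl} I would keep the general form, which is exactly the claimed right-hand side; observe that even when $n_{\pi(k)\ell}$ is small (possibly $<d_\ell$) the inequality remains valid, merely loose, so no lower bound on the misclassification count is needed. For \eqref{rst1:lem-spec-B-kl} I would invoke condition \eqref{n-1} to show $n_{\pi(k)k} = N_k - |\mC_k^*\setminus \mC_{\pi(k)}| \ge N_k - \tfrac18 N_{\min} \ge \tfrac78 N_k \gtrsim d_k$, whence $\sqrt{d_k/n_{\pi(k)k}}+d_k/n_{\pi(k)k}\lesssim\sqrt{d_k/N_k}$, which the absolute constant in $N_{\min}\gtrsim d_k$ can be chosen large enough to force below $\tfrac{1}{32}$ (after accounting for $\tfrac{5c_1}{4}$); Weyl's inequality, applied with the reference matrix $\tfrac{1}{d_k}\bI$ whose singular values are all $1/d_k$, then converts the operator-norm bound into $|\sigma_i(\bm{\Psi}_{\pi(k)k})-1/d_k|\le 1/(32 d_k)$ for every $i\in[d_k]$. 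Finally, a union bound over the $O(K^2)$ relevant pairs $(\pi(k),\ell)$, with $u$ enlarged by $\log K$ (absorbed again by $d\gtrsim\log N$ provided $K$ is at most polynomial in $N$), delivers the global probability $1-2K/(d_{\min}^2 N)$.
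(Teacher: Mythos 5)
Your proposal is correct and follows essentially the same route as the paper: the paper likewise reduces both claims to a Vershynin-type covariance concentration bound for uniform spherical vectors (its Lemma \ref{lem:cov-esti}), takes the tail parameter $u=\log(Kd_\ell^2N)$ and absorbs it via $d_\ell \gtrsim \log N$ to obtain the $\tfrac{5c_1}{4}$ constant, uses \eqref{n-1} to get $n_{\pi(k)k}\ge \tfrac{7}{8}N_k$ and $N_k \gtrsim d_k$ to force the diagonal bound below $\tfrac{1}{32d_k}$, and finishes with Weyl's inequality and a union bound over the $K^2$ pairs. The only cosmetic difference is your explicit rescaling $\by_i=\sqrt{d_\ell}\,\ba_i$ before invoking sub-gaussian covariance estimation, which is equivalent to the paper's direct citation of the spherical case.
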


Armed with this lemma, we are now ready to show that the distance from the subspaces generated by the update steps to the true ones can be bounded by the number of misclassfied data points.   

\begin{lemma}\label{lem:contra-U-less}  
Let $\bG_{\bU_k}(\bH)=\sum_{i=1}^N h_{ik} \bz_i\bz_i^T$ for some $\bH \in \mM^{N\times K}$ and $\lambda_{k1} \ge \dots \ge \lambda_{kd}$ be the $d$ leading eigenvalues of $\bG_{\bU_k}(\bH)$ for all $k \in [K]$. Suppose that for all $k \in [K]$,
\begin{align}\label{dk}
\hat{d}_k  = \argmax_{i\in [d-1]} \left(\lambda_{ki}  - \lambda_{k(i+1)} \right)
\end{align}
and
\begin{align}\label{Uk}
\bU_{k} = \mathrm{PCA}(\bG_{\bU_k}(\bH), \hat{d}_k). 
\end{align}
Suppose in addition that $\pi:[K]\rightarrow [K]$ is a permutation, $N_{\min} \gtrsim  d_k \gtrsim \log N$ for all $k \in [K]$, and $\varepsilon \in 	\left(0, 1/({8\kappa_d}) \right]$ is a constant such that
\begin{align}\label{init:U-noiseless}
W_k(\bH) \le \varepsilon N_{\min}\ \text{for all}\ k \in [K].
\end{align}
Then, it holds with probability at least $1-2K/(d_{\min}^2N)$ that 
\begin{align}\label{rst:dk}
\hat{d}_{\pi(k)} = d_k\ \text{for all}\ k \in [K],
\end{align}
\begin{align}\label{rst:lem-contra-U}
\sum_{k=1}^K d(\bU_{\pi(k)},\bU_{k}^*)  \le  \frac{2d_{\max}}{N_{\min}} & \max\left\{\frac{1}{d_{\min}} \|\bH-\bHs\bQ_{\pi}\|_F^2, \notag \right. \\  & \quad \left.  2K^2(c_1+1)c_1 \right\},
\end{align}
where $c_1$ is the constant in Lemma \ref{lem:spec-B-kl}. 
% where $\tilde{\bU}_{\pi(k)}$ consists of eigenvectors associated with the top $\hat{d}_{\pi(k)}$ eigenvalues of $\bG_{\bU_{\pi(k)}}(\bH)$ for all $k \in [K]$.  
\end{lemma}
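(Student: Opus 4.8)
The plan is to fix $k \in [K]$ and analyze the PCA input matrix $\bG_{\bU_{\pi(k)}}(\bH)=\sum_{i\in\mC_{\pi(k)}}\bz_i\bz_i^T$ through a signal-plus-perturbation decomposition. Writing $\bz_i=\bU_\ell^*\ba_i$ for $i\in\mC_\ell^*$ and grouping the sum over $\mC_{\pi(k)}$ by true membership, I would express $\bG_{\bU_{\pi(k)}}(\bH)=\bm{M}_k+\bE_k$, where $\bm{M}_k=n_{\pi(k)k}\bU_k^*\bm{\Psi}_{\pi(k)k}(\bU_k^*)^T$ is the contribution of the correctly assigned points and $\bE_k=\sum_{\ell\neq k}n_{\pi(k)\ell}\bU_\ell^*\bm{\Psi}_{\pi(k)\ell}(\bU_\ell^*)^T$ collects the contributions of the misclassified points. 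The matrix $\bm{M}_k$ is supported on $\mathrm{range}(\bU_k^*)$, and by \eqref{rst1:lem-spec-B-kl} its $d_k$ nonzero eigenvalues all lie in $[\tfrac{31\,n_{\pi(k)k}}{32 d_k},\tfrac{33\,n_{\pi(k)k}}{32 d_k}]$, so $\bm{M}_k$ behaves like the projector onto the true subspace scaled by $n_{\pi(k)k}/d_k$; the quantity to be controlled is $\bE_k$.

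Before invoking the spectral estimates I would convert the misclassification hypothesis \eqref{init:U-noiseless} into counting bounds. Using the definition \eqref{def:Gk} of $W_k(\bH)$ and $\varepsilon\le 1/(8\kappa_d)\le 1/8$, I get $n_{\pi(k)k}=N_k-|\mC_k^*\setminus\mC_{\pi(k)}|\ge(1-\varepsilon)N_{\min}$ and $\sum_{\ell\neq k}n_{\pi(k)\ell}=|\mC_{\pi(k)}\setminus\mC_k^*|\le\varepsilon N_{\min}$. Since $W_k(\bH)\le\varepsilon N_{\min}$ already implies the hypothesis $W_k(\bH)\le\tfrac18 N_{\min}$ of Lemma \ref{lem:spec-B-kl}, I may work on the same event of probability at least $1-2K/(d_{\min}^2N)$. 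On this event, $\|\bE_k\|\le\sum_{\ell\neq k}n_{\pi(k)\ell}\,\sigma_1(\bm{\Psi}_{\pi(k)\ell})$, and substituting \eqref{rst2:lem-spec-B-kl} together with $\sqrt{x}\le(1+x)/2$ to fold the square-root terms into the linear and constant ones yields a bound of the form $\|\bE_k\|\lesssim\sum_{\ell\neq k}\tfrac{n_{\pi(k)\ell}}{d_\ell}+c_1(K-1)$, each nonzero $\ell$ contributing a constant from the $d_\ell/n_{\pi(k)\ell}$ piece.

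With these ingredients I would prove the two conclusions. For the dimension recovery \eqref{rst:dk}, Weyl's inequality localizes the spectrum of $\bG_{\bU_{\pi(k)}}(\bH)$: its top $d_k$ eigenvalues lie within $\|\bE_k\|$ of $[\tfrac{31\,n_{\pi(k)k}}{32 d_k},\tfrac{33\,n_{\pi(k)k}}{32 d_k}]$, while the rest lie in $[0,\|\bE_k\|]$. Hence the consecutive gaps among the leading $d_k$ eigenvalues are at most $\tfrac{n_{\pi(k)k}}{16 d_k}+2\|\bE_k\|$, the gaps beyond index $d_k$ are at most $\|\bE_k\|$, whereas $\lambda_{\pi(k),d_k}-\lambda_{\pi(k),d_k+1}\ge\tfrac{31\,n_{\pi(k)k}}{32 d_k}-2\|\bE_k\|$. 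Provided $\|\bE_k\|$ is a small fraction of $n_{\pi(k)k}/d_k$ — which the counting bounds and the scaling $N_{\min}\gtrsim d_{\max}$ (with $\varepsilon\kappa_d\le 1/8$) enforce for the linear part — the gap at index $d_k$ strictly dominates every other gap, so the $\argmax$ in \eqref{dk} returns $d_k$. Having identified $\hat d_{\pi(k)}=d_k$, the output $\bU_{\pi(k)}$ is exactly the top-$d_k$ eigenspace of $\bG_{\bU_{\pi(k)}}(\bH)$, and a Davis--Kahan $\sin\Theta$ argument against $\bm{M}_k$ gives $d(\bU_{\pi(k)},\bU_k^*)\le\|\bE_k\|/(\tfrac{31\,n_{\pi(k)k}}{32 d_k}-\|\bE_k\|)\lesssim d_k\|\bE_k\|/N_{\min}$.

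Finally I would sum over $k$. Substituting the bound on $\|\bE_k\|$, using $d_k\le d_{\max}$, $d_\ell\ge d_{\min}$, $n_{\pi(k)k}\ge(1-\varepsilon)N_{\min}$, and the identity $\sum_k\sum_{\ell\neq k}n_{\pi(k)\ell}=\tfrac12\|\bH-\bHs\bQ_\pi\|_F^2$ (the misclassification count equals $d_F^2/2$), the linear terms collapse into a multiple of $\tfrac{1}{d_{\min}}\|\bH-\bHs\bQ_\pi\|_F^2$ and the constant terms into a multiple of $K^2$, giving precisely the $\max\{\cdot,\cdot\}$ form of \eqref{rst:lem-contra-U} once the $c_1$-dependence is tracked. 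I expect the main obstacle to be the bookkeeping around $\bE_k$: one must keep $\|\bE_k\|$ small enough to force the dominant eigengap at index $d_k$ \emph{and} to keep the Davis--Kahan denominator $\gtrsim n_{\pi(k)k}/d_k$, while still producing a bound whose leading term is linear in the misclassification count. The delicate piece is the square-root term in \eqref{rst2:lem-spec-B-kl}, which must be apportioned between the $\|\bH-\bHs\bQ_\pi\|_F^2$ term and the constant $K^2$ term rather than introducing a new intermediate scale; controlling the constant $c_1(K-1)$ part relative to the signal gap is precisely why the baseline $2K^2(c_1+1)c_1$ appears inside the maximum.
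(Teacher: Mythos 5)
Your proposal is correct and follows essentially the same route as the paper's proof: the same decomposition of $\bG_{\bU_{\pi(k)}}(\bH)$ into the correctly-assigned "signal" term plus the misclassified "error" term $\bE_k$, the same invocation of Lemma \ref{lem:spec-B-kl} and the counting bounds from $W_k(\bH)\le\varepsilon N_{\min}$, the same Weyl-inequality eigengap argument for $\hat d_{\pi(k)}=d_k$, and the same summation identity $\sum_k\sum_{\ell\neq k}n_{\pi(k)\ell}=\tfrac12\|\bH-\bHs\bQ_\pi\|_F^2$ to produce the $\max\{\cdot,\cdot\}$ bound. The only (immaterial) difference is in the subspace-distance step, where you invoke the Davis--Kahan $\sin\Theta$ theorem against $\bm{M}_k$, whereas the paper derives the equivalent bound $d(\bU_{\pi(k)},\bU_k^*)\le\|(\bI-\bU_k^*\bU_k^{*T})\bG_{\bU_{\pi(k)}}(\bH)\|\,\|\bm{\Sigma}_{\pi(k)}^{-1}\|$ directly from the eigenvalue equation $\bU_{\pi(k)}=\bG_{\bU_{\pi(k)}}(\bH)\bU_{\pi(k)}\bm{\Sigma}_{\pi(k)}^{-1}$, exploiting that the projector $\bI-\bU_k^*\bU_k^{*T}$ annihilates the signal term.
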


\subsection{Analysis of Cluster Assignment Step}\label{subsec:assign}

\begin{figure*}[t]
\begin{center}
	\begin{minipage}[b]{0.33\linewidth}
		\centering
		\centerline{\includegraphics[width=\linewidth]{./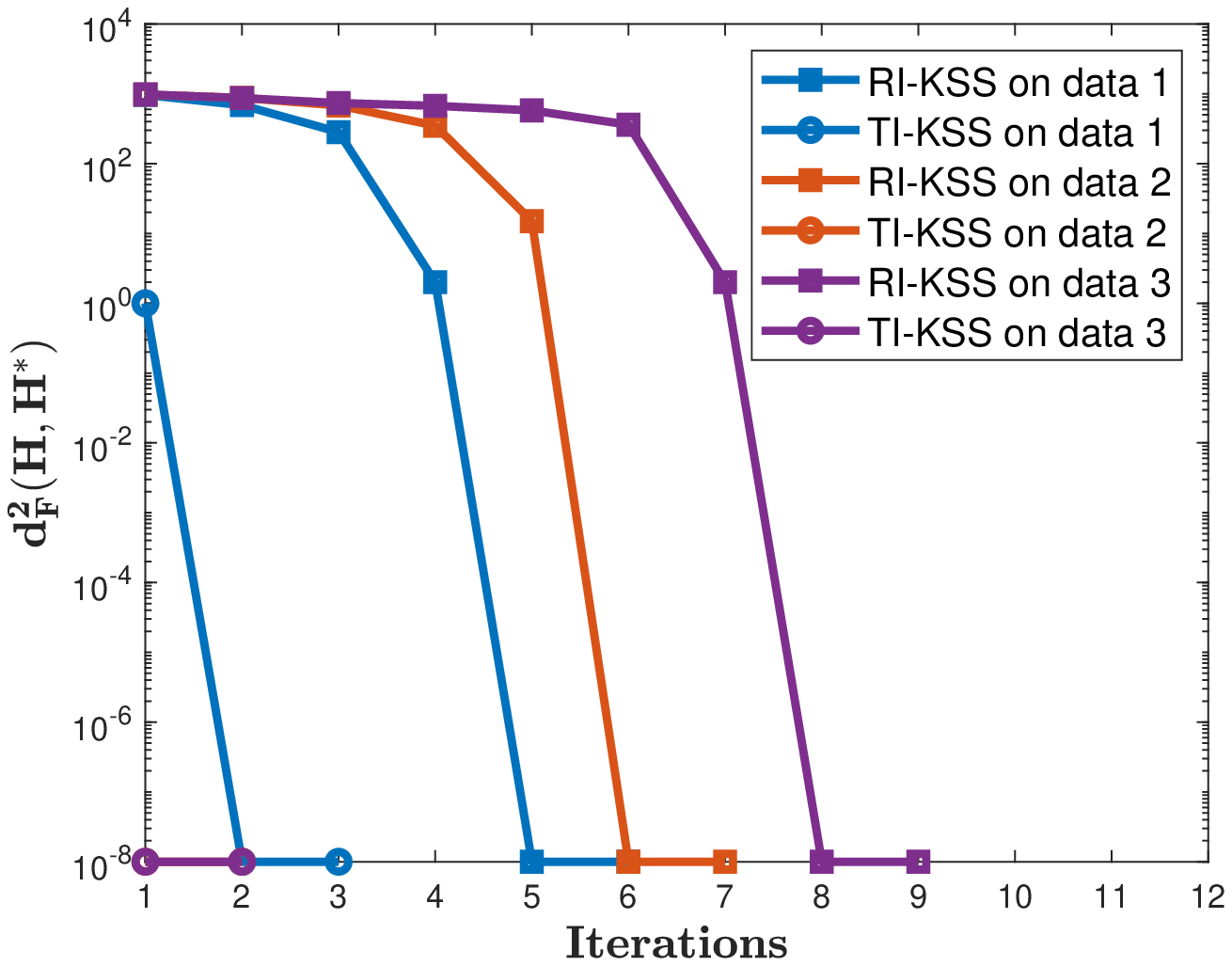}}
		%  \vspace{1.0cm}
		\centerline{(a) {\footnotesize $K=3$}}\medskip
	\end{minipage}
	%\hfill
	\begin{minipage}[b]{0.33\linewidth}
		\centering
		\centerline{\includegraphics[width=\linewidth]{./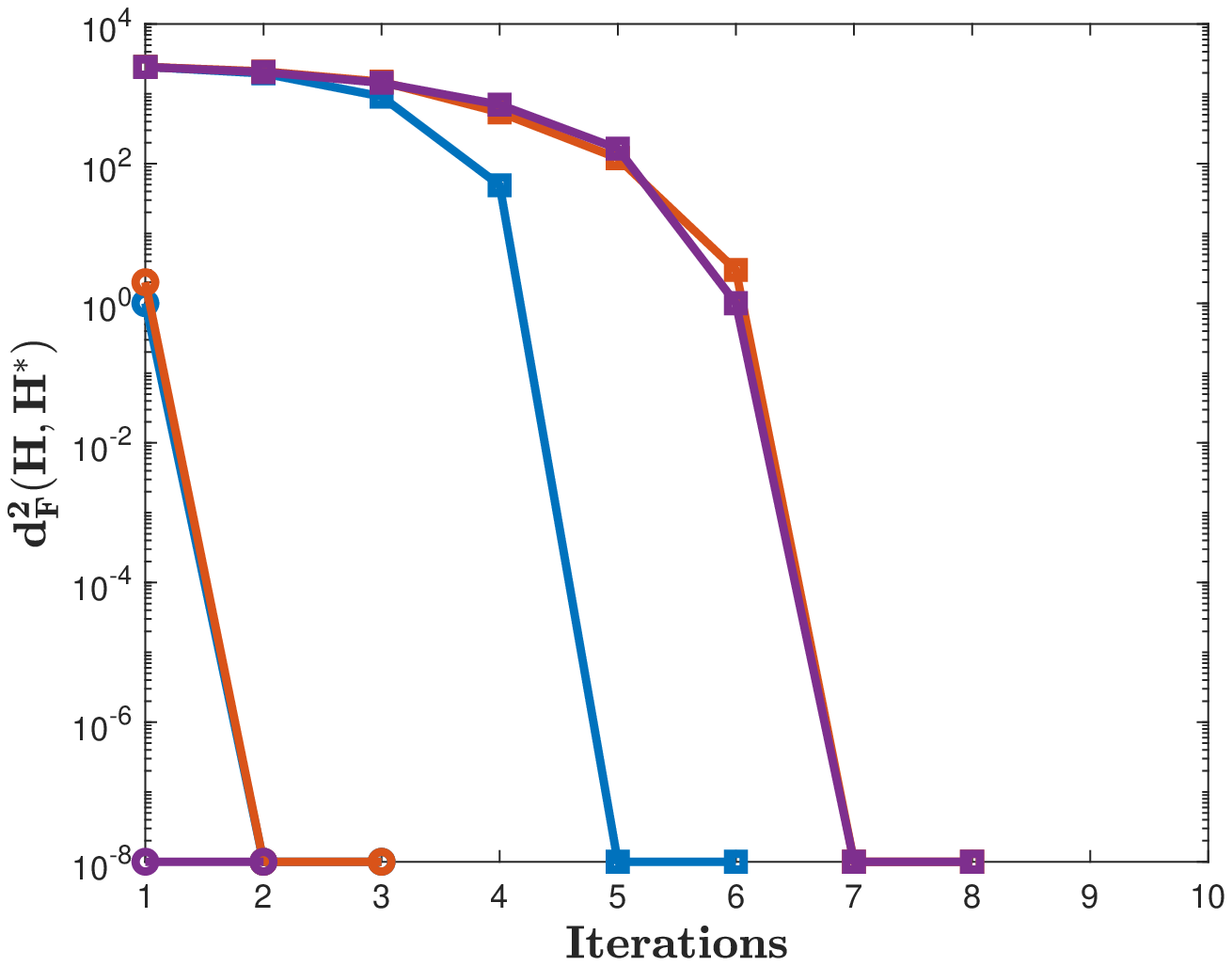}}
		%\cspace{1.5cm}
		\centerline{(b) {\footnotesize $K=6$}} \medskip
	\end{minipage}
	\begin{minipage}[b]{0.33\linewidth}
		\centering
		\centerline{\includegraphics[width=\linewidth]{./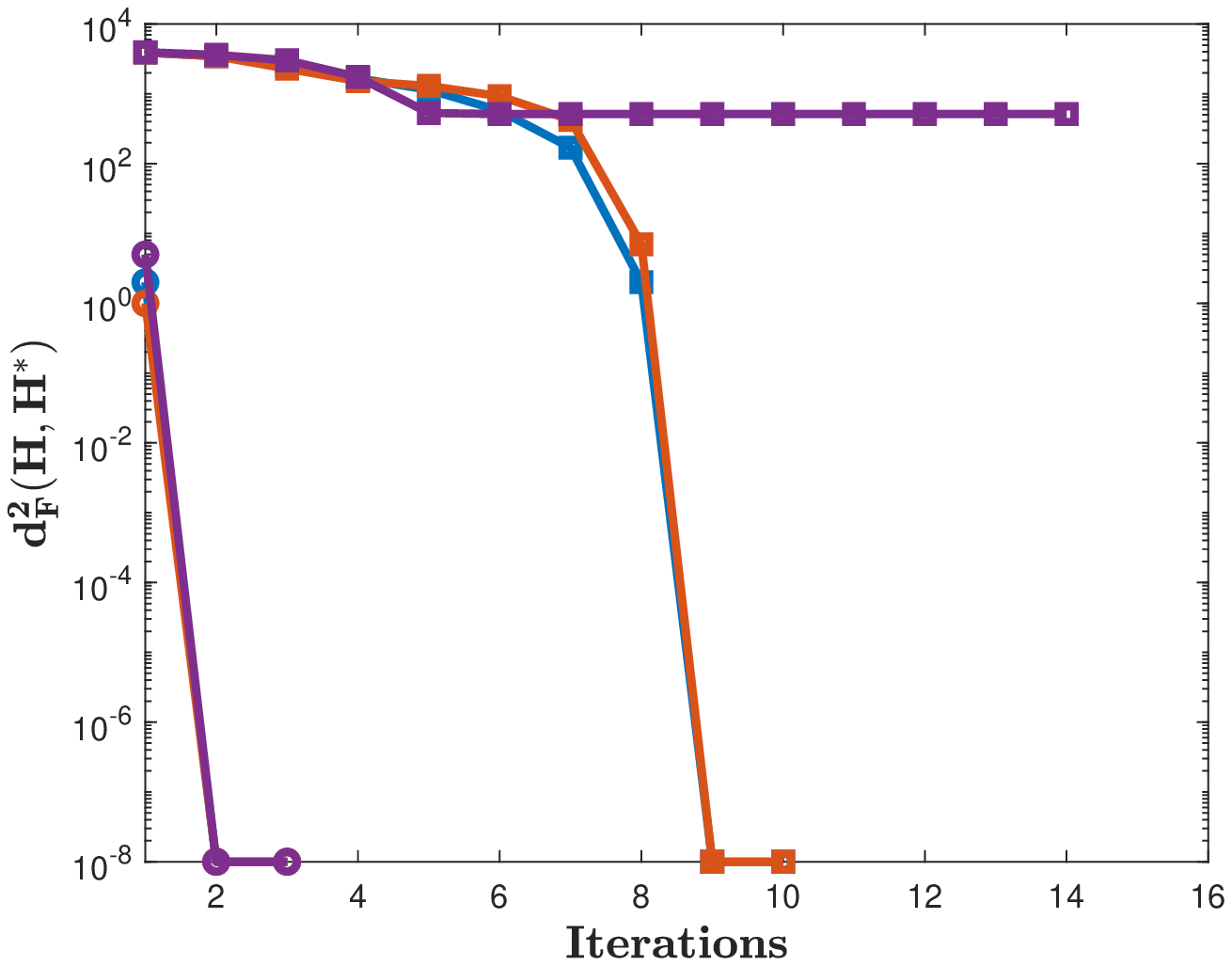}}
		\centerline{(c) {\footnotesize $K=9$}} \medskip 
	\end{minipage}
\end{center}
	\vskip -0.25in
	\caption{Convergence performance of KSS: The $x$-axis is number of iterations and the $y$-axis is the distance from an iterate to a ground truth, i.e., $d_F^2(\bH^k,\bHs)+10^{-8}$, where $\bH^k$ is the $k$-th iterate generated by KSS.}
	\label{fig:1}
\end{figure*}

In this subsection, we turn to study convergence behavior of the cluster assignment step in the KSS iterations. 
Observe that Problem \eqref{T-H} is row-separable, and thus we can solve it by dividing it into $N$ subproblems. Specifically, for a row of $\bG$ denoted by $\bg \in \R^K$, it suffices to consider
\begin{align*}
\mT(\bg) = \argmin \left\{ \langle \bg,\bh \rangle: \bh^T\bo_K = 1,\ \bh \in \{0,1\}^K \right\}.
\end{align*}Then, we can show that this problem admits a closed-form solution, which may be not unique. 
\begin{lemma}\label{lem:solution-T-h}
For any $\bg \in \R^K$, it holds that $\bv \in \mT(\bg)$ if and only if $v_k = 1$ and $v_\ell = 0$ for all $\ell \neq k$, where $k\in [K]$ satisfies $g_k \le g_\ell$ for all $\ell \neq k$. Moreover, $\bv \in \mT(\bg)$ if and only if $\bQ\bv \in \mT(\bQ\bg)$ for $\bQ \in \Pi_K$. 
\end{lemma}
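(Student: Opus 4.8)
The plan is to observe that the feasible region of the assignment subproblem is nothing but the set of standard basis vectors of $\R^K$, which collapses the whole lemma into a one-line comparison of coordinates. First I would note that the two constraints $\bh^T\bo_K = 1$ and $\bh \in \{0,1\}^K$ together force exactly one entry of $\bh$ to equal $1$ and all others to equal $0$; hence the feasible set of $\mT(\bg)$ is precisely $\{\be_1,\dots,\be_K\}$, where $\be_j$ denotes the $j$-th standard basis vector. Evaluating the objective at such a point gives $\langle \bg,\be_j\rangle = g_j$, so minimizing $\langle \bg,\bh\rangle$ over the feasible set is equivalent to minimizing $g_j$ over $j \in [K]$. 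Consequently $\bv \in \mT(\bg)$ if and only if $\bv = \be_k$ for some index $k$ attaining the minimum, i.e.\ $g_k \le g_\ell$ for all $\ell \neq k$, which is exactly the claimed characterization.

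For the permutation claim, I would exploit the fact that every $\bQ \in \Pi_K$ is orthogonal and fixes the all-ones vector. Concretely, the map $\bh \mapsto \bQ\bh$ is a bijection of $\{0,1\}^K$ onto itself, and it preserves the linear constraint because $(\bQ\bh)^T\bo_K = \bh^T \bQ^T\bo_K = \bh^T\bo_K$, using $\bQ^T\bo_K = \bo_K$; hence it maps the feasible set of $\mT(\bg)$ bijectively onto that of $\mT(\bQ\bg)$. Moreover the objective is invariant under this relabeling, since $\langle \bQ\bg,\bQ\bh\rangle = \bh^T\bQ^T\bQ\bg = \langle \bg,\bh\rangle$ by $\bQ^T\bQ = \bI$. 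Combining invariance of both the feasible set and the objective, $\bv$ minimizes $\langle \bg,\cdot\rangle$ over the feasible set if and only if $\bQ\bv$ minimizes $\langle \bQ\bg,\cdot\rangle$, which is precisely the equivalence $\bv \in \mT(\bg) \iff \bQ\bv \in \mT(\bQ\bg)$.

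Honestly, there is no substantive obstacle here: the statement is immediate once the feasible set is identified with the standard basis and the objective is read off coordinatewise. The only point demanding a little care is the non-uniqueness of the minimizer — making sure the ``if and only if'' correctly captures \emph{all} optimal assignments when $\bg$ has ties in its minimal coordinate. This is handled cleanly by phrasing the optimality condition as the non-strict inequality $g_k \le g_\ell$ for all $\ell \neq k$, so that each index achieving the minimum value of $\bg$ corresponds to a distinct element of $\mT(\bg)$, rather than asserting a unique solution.
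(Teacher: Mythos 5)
Your proof is correct and follows essentially the same route as the paper's: identify the feasible set with the standard basis vectors so that minimizing $\langle \bg,\bh\rangle$ reduces to picking a minimal coordinate of $\bg$, with the permutation claim following from the invariance of both the feasible set and the objective under relabeling. Your write-up is somewhat more explicit than the paper's (which states the bijection/invariance step in one line), but there is no substantive difference in the argument.
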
 
Based on the above result, we can prove that the operator $\mT$ possesses a Lipschitz-like property. 
\begin{lemma}\label{lem:contra-T-h-less}
Suppose that $\bg \in \R^K$ is arbitrary and $\delta >0$ is a constant such that $g_\ell - g_k \ge  \delta $ for some $k\in [K]$ and all $\ell \neq k$. Then, for any $\bv \in \mT(\bg)$, $\bg^\prime \in \R^K$, and $\bv^\prime \in \mT(\bg^\prime)$,  it holds that
\begin{align}\label{rst:contra-T-h}
\|\bv - \bv^\prime\| \le \frac{2\|\bg-\bg^\prime\|}{\delta}. 
\end{align}
\end{lemma}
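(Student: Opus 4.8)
The plan is to reduce the inequality to an elementary two-coordinate estimate by exploiting the explicit description of $\mT$ furnished by Lemma~\ref{lem:solution-T-h}. By that lemma, every $\bv \in \mT(\bg)$ is the indicator vector $\be_k$ of an index $k$ at which $\bg$ attains its minimum; the separation hypothesis $g_\ell - g_k \ge \delta$ for all $\ell \neq k$ forces this minimizer to be unique, so $\bv = \be_k$. Writing $k'$ for the index with $v'_{k'} = 1$, the same lemma gives $\bv' = \be_{k'}$ together with the optimality relation $g'_{k'} \le g'_k$. Thus both $\bv$ and $\bv'$ are standard basis vectors, and the left-hand side of \eqref{rst:contra-T-h} takes only one of two values.

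First I would dispose of the case $k' = k$, where $\bv = \bv'$ and \eqref{rst:contra-T-h} holds trivially. In the remaining case $k' \neq k$ the difference $\bv - \bv'$ has exactly one $+1$ and one $-1$ entry, so $\|\bv - \bv'\| = \sqrt{2}$; it therefore suffices to establish the lower bound $\|\bg - \bg'\| \ge \delta/\sqrt{2}$, which immediately yields $2\|\bg - \bg'\|/\delta \ge \sqrt{2} = \|\bv - \bv'\|$.

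The crux is to produce this lower bound by combining the two inequalities already in hand. Subtracting $g'_{k'} \le g'_k$ from $g_{k'} - g_k \ge \delta$ gives $(g_{k'} - g'_{k'}) - (g_k - g'_k) \ge \delta$. Setting $u = g_k - g'_k$ and $w = g_{k'} - g'_{k'}$, the $k$-th and $k'$-th coordinates of $\bg - \bg'$, this reads $w - u \ge \delta$. I would then minimize $u^2 + w^2$ over the half-plane $\{w - u \ge \delta\}$; the minimizer is $(u,w) = (-\delta/2, \delta/2)$, so $u^2 + w^2 \ge \delta^2/2$. Since $\|\bg - \bg'\|^2 \ge u^2 + w^2$, the bound $\|\bg - \bg'\| \ge \delta/\sqrt{2}$ follows, closing the argument.

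I do not anticipate a genuine obstacle: once Lemma~\ref{lem:solution-T-h} reduces $\mT$ to selecting a minimizing index, the whole argument rests on the single two-variable quadratic estimate above. The one point worth flagging is that the worst case $\|\bv - \bv'\| = \sqrt{2}$ is exactly realized by the extremal configuration $u = -w = -\delta/2$, which is what pins down the constant $2$ in \eqref{rst:contra-T-h} and shows it cannot be improved along this route.
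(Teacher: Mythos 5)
Your proof is correct and follows essentially the same route as the paper's: both reduce via Lemma~\ref{lem:solution-T-h} to the two standard basis vectors $\be_k$ and $\be_{k'}$, dispose of the case $k'=k$ trivially, and in the case $k' \neq k$ derive $\|\bg-\bg'\|^2 \ge \delta^2/2$ from the two coordinates $g_k - g'_k$ and $g_{k'} - g'_{k'}$ (the paper does this via the inequality $a^2+b^2 \ge \tfrac{1}{2}(a-b)^2$, which is the same estimate you obtain by minimizing over the half-plane $\{w-u\ge\delta\}$). No gaps; your closing remark that the constant $2$ is tight along this argument is a nice additional observation.
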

We are now ready to show that the number of misclassified points is bounded by the subspace distance.
\begin{lemma}\label{lem-less:contra-H}
Let $\pi:[K]\rightarrow [K]$ be a permutation such that $\bU = \left(\bU_1,\dots,\bU_K\right)$ with $\bU_{\pi(k)} \in \mO^{n\times d_k}$ for all $k\in [K]$. Suppose that $\bar{\bH} \in \mT\left(\bG_{\bH}(\bU)\right)$, where the $(i,k)$-th element of $\bG_{\bH}(\bU) \in \R^{N\times K}$ is $\|\bz_i\|^2-\|\bU_k^T\bz_i\|^2$. Then, it holds for all $i\in [N]$ that
\begin{align}\label{rst-1:lem-less-contra-H}
\|\bar{\bh}_i - \bh_i^*\bQ_{\pi}\|   \le   \frac{ 2 \sqrt{\sum_{k=1}^K d(\bU_{\pi(k)},\bU_{k}^*)} }{1 - \max_{\ell \neq I_i} \|\bU_\ell^{*^T}\bz_i\|^2},
\end{align}
where the row vectors $\bar{\bh}_i, \bh_i^* \in \R^K$ respectively denote the $i$-th row of $\bar{\bH}$ and $\bHs$, and $I_i \in [K]$ satisfies $h_{iI_i}^*=1$ for all $i \in [N]$.
\end{lemma}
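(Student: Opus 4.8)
The plan is to observe that the inequality is a deterministic, per-point statement whose content is vacuous unless point $i$ is misclassified. Indeed, both $\bar{\bh}_i$ and $\bh_i^*\bQ_{\pi}$ are standard-basis (one-hot) row vectors in $\R^K$, so $\|\bar{\bh}_i - \bh_i^*\bQ_{\pi}\|$ equals $0$ when they coincide (in which case the bound holds trivially, the right-hand side being nonnegative) and equals $\sqrt{2}$ otherwise. Hence I would reduce to the case $\bar{\bh}_i \neq \bh_i^*\bQ_{\pi}$ and aim to show that the right-hand side is at least $\sqrt{2}$. By Lemma \ref{lem:solution-T-h}, the support of $\bar{\bh}_i$ is an index $\hat{k}$ minimizing the $i$-th row of $\bG_{\bH}(\bU)$, equivalently maximizing $\|\bU_k^T\bz_i\|^2$ over $k\in[K]$; since $\bh_i^*\bQ_{\pi}$ is supported on $\pi(I_i)$, misclassification means $\hat{k}=\pi(m)$ for some $m\neq I_i$.

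Next I would record two elementary perturbation estimates, using the model normalization $\|\bz_i\|=1$ and the fact that $\bU_{I_i}^{*T}\bz_i=\ba_i$ gives $\|\bU_{I_i}^{*T}\bz_i\|^2=1$. For any $\bU,\bV\in\mO^{n\times d}$ one has $\big|\bz_i^T(\bU\bU^T-\bV\bV^T)\bz_i\big|\le \|\bU\bU^T-\bV\bV^T\|\,\|\bz_i\|^2 = d(\bU,\bV)$. Applying this with the true and estimated bases gives the lower bound $\|\bU_{\pi(I_i)}^T\bz_i\|^2 \ge \|\bU_{I_i}^{*T}\bz_i\|^2 - d(\bU_{\pi(I_i)},\bU_{I_i}^*) = 1 - d(\bU_{\pi(I_i)},\bU_{I_i}^*)$, and the upper bound $\|\bU_{\pi(m)}^T\bz_i\|^2 \le \|\bU_m^{*T}\bz_i\|^2 + d(\bU_{\pi(m)},\bU_m^*) \le \max_{\ell\neq I_i}\|\bU_\ell^{*T}\bz_i\|^2 + d(\bU_{\pi(m)},\bU_m^*)$, where the last step uses $m\neq I_i$.

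Finally I would combine these through the optimality of the assignment. Since $\hat{k}=\pi(m)$ maximizes $\|\bU_k^T\bz_i\|^2$, we have $\|\bU_{\pi(m)}^T\bz_i\|^2 \ge \|\bU_{\pi(I_i)}^T\bz_i\|^2$; chaining this between the two bounds above yields $1 - \max_{\ell\neq I_i}\|\bU_\ell^{*T}\bz_i\|^2 \le d(\bU_{\pi(I_i)},\bU_{I_i}^*) + d(\bU_{\pi(m)},\bU_m^*) \le \sum_{k=1}^K d(\bU_{\pi(k)},\bU_k^*)=:D$. Writing $g_i := 1-\max_{\ell\neq I_i}\|\bU_\ell^{*T}\bz_i\|^2 \in (0,1]$, this reads $g_i\le D$, so $\sqrt{D}\ge\sqrt{g_i}\ge g_i$ and hence $\tfrac{2\sqrt{D}}{g_i}\ge \tfrac{2}{\sqrt{g_i}}\ge 2 \ge \sqrt{2}=\|\bar{\bh}_i-\bh_i^*\bQ_{\pi}\|$, as desired. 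The argument is deterministic and largely bookkeeping; the points requiring care are (i) ensuring the denominator $g_i$ is positive, which holds generically under the UoS model since a random $\bz_i\in S_{I_i}^*$ almost surely does not lie in any other true subspace, (ii) keeping the permuted indices $\pi(I_i)$ and $\pi(m)$ aligned with their true counterparts $I_i$ and $m$, and (iii) the final conversion from the additive estimate $g_i\le D$ into the stated form with $\sqrt{D}$ in the numerator, where the inequality $\sqrt{g_i}\ge g_i$ for $g_i\le 1$ supplies the slack (indeed the bound is loose, yielding $2$ where only $\sqrt{2}$ is needed).
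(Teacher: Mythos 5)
Your proof is correct, but it follows a genuinely different route from the paper's. The paper proceeds via its assignment-operator machinery: it first shows that $\bHs$ is the unique element of $\mT\left(\bG_{\bH}(\bU^*)\right)$ with per-row margin $\delta_i = 1-\max_{\ell\neq I_i}\|\bU_\ell^{*^T}\bz_i\|^2$ (using $\|\bU_{I_i}^{*^T}\bz_i\|=\|\ba_i\|=1$), then bounds the row-wise perturbation of the gain matrix by $\|\bg_i\bQ_{\pi}^T-\bg_i^*\| \le \bigl(\sum_{k=1}^K d^2(\bU_{\pi(k)},\bU_k^*)\bigr)^{1/2}$, and finally invokes the Lipschitz-like property of $\mT$ (Lemma \ref{lem:contra-T-h-less}) to transfer this perturbation to the assignment vectors. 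You instead exploit the discreteness of the left-hand side (it is $0$ or $\sqrt{2}$), reduce to the misclassified case, and compare projection energies at the assigned index $\pi(m)$ and at $\pi(I_i)$ directly, obtaining $\delta_i \le d(\bU_{\pi(I_i)},\bU_{I_i}^*)+d(\bU_{\pi(m)},\bU_m^*)\le\sum_{k=1}^K d(\bU_{\pi(k)},\bU_k^*)$, which forces the right-hand side to be at least $2$. Both arguments rest on the same ingredients ($\|\bz_i\|=1$, the quadratic-form bound $|\bz_i^T(\bU\bU^T-\bV\bV^T)\bz_i|\le d(\bU,\bV)$, and optimality of the assignment), and yours is more elementary since it bypasses Lemma \ref{lem:contra-T-h-less} entirely; it also reveals that the stated inequality is loose (the right-hand side is at least $2$ whenever the left-hand side is nonzero). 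What the paper's route buys is the sharper conclusion with $\sum_{k} d^2(\bU_{\pi(k)},\bU_k^*)$ inside the square root, which is the form actually reused downstream in \eqref{rst:prop-H} of Proposition \ref{prop:basin-attra} and in the proof of Theorem \ref{thm:KSS}; your case analysis certifies only the stated first-power version \eqref{rst-1:lem-less-contra-H} and would not directly support those later invocations. Finally, the degenerate case $\max_{\ell\neq I_i}\|\bU_\ell^{*^T}\bz_i\|=1$ (vanishing denominator) is implicitly excluded in both proofs—the paper simply asserts $\|\bU_\ell^{*^T}\bz_i\|<1$ for $\ell\neq I_i$—so your almost-sure justification of positivity is not a gap relative to the paper.
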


The following lemma indicates that the KSS iterations directly converge to ground truth once the distance from the current iterate to ground truth is small enough. This implies finite termination of the KKS method. 
\begin{lemma}\label{lem:one-step}
Suppose that Assumption \ref{AS:1} holds,  $N_{\min} \gtrsim  d_k \gtrsim \log N$ for all $k \in [K]$, and $\bH^t \in \mM^{N\times K}$ satisfies
\begin{align}\label{init:lem:one-step}
d^2_F(\bH^t,\bHs) \le 2K^2(c_1+1)c_1d_{\min},
\end{align}
where $c_1$ is the constant in Lemma \ref{lem:spec-B-kl}. Then, it holds with probability at least $1-2K/(d_{\min}^2N)-5K^2/N$ that   
\begin{align*}
\bH^{t+1} = \bH^*\bQ_\pi
\end{align*}
for some $\bQ_\pi \in \Pi_K$.  
\end{lemma}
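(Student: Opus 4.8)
The plan is to execute a single KSS iteration started from $\bH^t$ and show that the subspace update followed by the cluster assignment lands \emph{exactly} on a relabeling of the ground truth, by chaining the subspace-distance bound of Lemma~\ref{lem:contra-U-less} into the row-wise assignment bound of Lemma~\ref{lem-less:contra-H}. Fix the permutation $\pi$ attaining $d_F(\bH^t,\bHs)$. Since the number of misclassified points equals $d_F^2(\bH^t,\bHs)/2$, the hypothesis \eqref{init:lem:one-step} gives $W_k(\bH^t)\le K^2(c_1+1)c_1 d_{\min}$ for every $k$; together with $N_{\min}\gtrsim d_{\min}$ (the implicit constant absorbing $\kappa_d K^2 c_1$) this certifies the requirement $W_k(\bH^t)\le\varepsilon N_{\min}$ with $\varepsilon\le 1/(8\kappa_d)$ needed by Lemma~\ref{lem:contra-U-less}. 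Applying that lemma to the update $(\bU^{t+1},\hat d^{t+1})$ yields $\hat d_{\pi(k)}^{t+1}=d_k$, and because $\tfrac{1}{d_{\min}}\|\bH^t-\bHs\bQ_\pi\|_F^2=\tfrac{1}{d_{\min}}d_F^2(\bH^t,\bHs)\le 2K^2(c_1+1)c_1$, the maximum in \eqref{rst:lem-contra-U} is attained by its second argument, so
\[
\sum_{k=1}^K d(\bU_{\pi(k)}^{t+1},\bU_k^*)\ \le\ \frac{4\,d_{\max}K^2(c_1+1)c_1}{N_{\min}},
\]
with probability at least $1-2K/(d_{\min}^2 N)$. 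This quantity is $o(1)$ because $N_{\min}\gtrsim d_{\max}$.

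Next I would control the denominator appearing in Lemma~\ref{lem-less:contra-H}, i.e.\ lower-bound $1-\max_{\ell\neq I_i}\|\bU_\ell^{*T}\bz_i\|^2$ uniformly over $i\in[N]$. Writing $\bz_i=\bU_{I_i}^*\ba_i$ with $\ba_i\sim\mathrm{Unif}(\S^{d_{I_i}-1})$, each term is the quadratic form $\ba_i^T\bm{M}_{I_i\ell}\ba_i$ in the PSD matrix $\bm{M}_{I_i\ell}=\bU_{I_i}^{*T}\bU_\ell^*\bU_\ell^{*T}\bU_{I_i}^*$, whose trace is $\mathrm{aff}^2(S_{I_i}^*,S_\ell^*)$ and whose operator norm is at most $1$. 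Its mean is $\mathrm{aff}^2(S_{I_i}^*,S_\ell^*)/d_{I_i}\le\overline{\mathrm{aff}}^2(S_{I_i}^*,S_\ell^*)\le\kappa^2\le 1/4$ by Assumption~\ref{AS:1}. A Bernstein/Hanson--Wright-type concentration inequality for quadratic forms of uniform sphere vectors then bounds the fluctuation by $O\!\big(\sqrt{\log N}\,\|\bm{M}_{I_i\ell}\|_F/d_{I_i}+\log N/d_{I_i}\big)=o(1)$, using $\|\bm{M}_{I_i\ell}\|_F\le\mathrm{aff}(S_{I_i}^*,S_\ell^*)\le\tfrac12\sqrt{d_{I_i}}$ and $d_{I_i}\gtrsim\log N$. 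Hence $\|\bU_\ell^{*T}\bz_i\|^2\le 1/2$ for every $\ell\neq I_i$ with failure probability $O(N^{-2})$ per pair, and a union bound over the $O(NK)$ pairs gives $1-\max_{\ell\neq I_i}\|\bU_\ell^{*T}\bz_i\|^2\ge 1/2$ for all $i$ with probability at least $1-5K^2/N$. I expect this concentration step to be the crux of the argument: the naive deterministic bound $\|\bU_\ell^{*T}\bz_i\|\le 1$ is useless since a single shared direction can make one singular value close to $1$, so the proof must genuinely exploit the randomness of $\ba_i$, and the hypothesis $d_k\gtrsim\log N$ is exactly what pushes the deviation below the $\kappa^2$-level mean.

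Finally I would combine the two bounds through Lemma~\ref{lem-less:contra-H}: on the intersection of the two high-probability events, for every $i\in[N]$,
\[
\|\bar\bh_i-\bh_i^*\bQ_\pi\|\ \le\ \frac{2\sqrt{\sum_{k} d(\bU_{\pi(k)}^{t+1},\bU_k^*)}}{1-\max_{\ell\neq I_i}\|\bU_\ell^{*T}\bz_i\|^2}\ \le\ 4\sqrt{\frac{4\,d_{\max}K^2(c_1+1)c_1}{N_{\min}}}\ <\ \sqrt 2,
\]
where the last inequality holds once $N_{\min}$ exceeds a sufficiently large multiple of $d_{\max}$ (absorbing the $K$ and $c_1$ factors), which is guaranteed by $N_{\min}\gtrsim d_{\max}$. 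Since $\bar\bh_i$ and $\bh_i^*\bQ_\pi$ are both $\{0,1\}$-vectors with a single nonzero entry, their distance is either $0$ or $\sqrt 2$; being strictly below $\sqrt 2$ forces $\bar\bh_i=\bh_i^*\bQ_\pi$ for every $i$, that is $\bH^{t+1}=\bHs\bQ_\pi$. A union bound over the two failure events then yields the conclusion with probability at least $1-2K/(d_{\min}^2 N)-5K^2/N$, completing the proof.
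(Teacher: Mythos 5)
Your proof is correct and follows the same architecture as the paper's: the paper establishes this lemma by invoking Proposition~\ref{prop:basin-attra}, which is exactly the chain you assemble by hand---Lemma~\ref{lem:contra-U-less} with the optimal permutation (the maximum in \eqref{rst:lem-contra-U} collapsing to its second argument, giving $\sum_{k=1}^K d(\bU^{t+1}_{\pi(k)},\bU^*_k)\le 4d_{\max}K^2(c_1+1)c_1/N_{\min}$), then Lemma~\ref{lem-less:contra-H}, then the observation that two one-hot rows at distance below $\sqrt{2}$ (the paper uses the cruder threshold $1$) must coincide, with the identical probability budget $1-2K/(d_{\min}^2N)-5K^2/N$. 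The one genuine difference is the denominator control: the paper bounds $1-\max_{\ell\neq I_i}\|\bU_\ell^{*^T}\bz_i\|^2\ge 1-\kappa\ge 1/2$ by citing its Lemma~\ref{lem:norm-Uz} (Gaussian-representation norm concentration, yielding $\|\bU_\ell^{*^T}\bz_i\|^2\le 2\kappa^2\le\kappa$ as in \eqref{eq1:lem-contra-H}), whereas you re-derive the bound $1/2$ from scratch via Hanson--Wright concentration of the quadratic form $\ba_i^T\bU_{I_i}^{*^T}\bU_\ell^{*}\bU_\ell^{*^T}\bU_{I_i}^{*}\ba_i$ about its mean $\mathrm{aff}^2(S^*_{I_i},S^*_\ell)/d_{I_i}\le\kappa^2\le 1/4$. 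Both are valid; your route is self-contained, does not need the hypothesis $\|\bm{\Sigma}^*_{k\ell}\|_F\ge 2$ under which the paper's Lemmas~\ref{lem:norm-unit} and \ref{lem:norm-Uz} are stated, and replaces the paper's inequality $\bigl(\kappa+O(\sqrt{\log N/d_\ell})\bigr)^2\le 2\kappa^2$ (which implicitly requires the deviation to be small relative to $\kappa$) by an absolute threshold, while the paper's route is a one-liner because its lemma is already in place. One caveat to repair in a polished write-up: under $d_k\gtrsim\log N$ your fluctuation $O(\sqrt{\log N/d_{I_i}})$ is not $o(1)$ but a constant, which falls below $1/4$ only if the implied constant in $\gtrsim$ is taken sufficiently large---the same convention you (and the paper) already use to absorb $\kappa_d K^2 c_1$ into $N_{\min}\gtrsim d_{\max}$.
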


Equipped with the results in Sections \ref{subsec:update} and \ref{subsec:assign}, we can prove Theorem \ref{thm:KSS}. % Due to the limitation of space, we omit the details here.
The complete proof can be found in Section \ref{appenSubsec:pfThmKSS} of the appendix.

\section{Experiment Results}

In this section, we report the convergence behavior, recovery performance, and numerical efficiency of the KSS method for SC on both synthetic and real datasets. % We use the MATLAB function {\bf eigs} for computing the eigenvectors  that are needed in the initialization stage of Algorithm \ref{alg-1}. 
All of our experiments are implemented in MATLAB R2020a on the Great Lakes HPC Cluster of the University of Michigan with 180GB memory and 16 cores. Our code is available at \href{https://github.com/peng8wang/ICML2022-K-Subspaces}{https://github.com/peng8wang/ICML2022-K-Subspaces}. 

\subsection{Convergence Behavior and Recovery Performance}

We first conduct 3 sets of numerical tests, which correspond to $K \in \{3,6,9\}$, to examine the convergence behavior and recovery performance of the KSS method  in the semi-random UoS model (see Definition \ref{UoS}). We generate $K$ overlapping subspaces as follows. First, we set $n=300$, $\overline{d}=30$, $\underline{d}=25$, and uniformly at random select $d_k \in [\underline{d},\overline{d}]$ for all $k \in [K]$. Second, we arbitrarily generate an orthogonal matrix $\bU= \left[\bu_1,\dots,\bu_n\right] \in \mO^n$  and set the shared basis as $\bar{\bU}=\left[\bu_{n-s+1},\dots,\bu_n\right]$ for an integer $s \in [0,\underline{d}]$. Next, we generate $\bV_k$ by randomly picking up $(d_k-s)$ columns, which are not repeated, from the first $n-s$ columns of $\bU$. Finally, we form $\bU_k^*=\left[\bV_k\ \bar{\bU} \right]$ for all $k \in [K]$, which ensures that the intersection between $S_k$ and $S_\ell$ is at least of dimension $s$ for all $1\le k \neq \ell \le K$. % We next generate $N_k=400$ data points in each subspace. 
In each test, we generate 3 datasets by setting $s=6$ and $N_k=500$ for all $k\in [K]$ and respectively run the KSS method with random initialization (denoted by RI-KSS) and TIPS initialization (denoted by TI-KSS) by setting $\tau=2/\sqrt{\overline{d}}$ on them. Then, we plot the distance of the iterates to ground truth, i.e., $d_F^2(\bH^k,\bH^*)+10^{-8}$, against the iteration numbers in Figure \ref{fig:1}. It can be observed that with a proper initialization, the KSS method converges so quickly that it finds the correct clustering within 10 iterations. This supports the result in Theorem \ref{thm:KSS}. Additionally, it exhibits a finite termination phenomenon that corroborates the result in Lemma \ref{lem:one-step}. Moreover, it is observed in Figure \ref{fig:1}(c) that RI-KSS gets stuck at a local minimum while TI-KSS does not on data 3. 

\begin{table}[!htbp]
\caption{Average CPU time (in seconds) and the best clustering accuracy of the tested methods on real datasets.}
\label{table-2}
\begin{center}
\begin{tabular}{lccccccc}
\toprule
 Accuracy & \emph{COIL100} & \emph{YaleB} & \emph{USPS} & \emph{MNIST}  \\
\midrule
KSS & {\bf 0.8117} & 0.7154 & {\bf 0.8172} & {\bf 0.9780}\\ 
SSC & 0.6732 & {\bf 0.8277} &  0.6583 & --\\
OMP & 0.3393 & {\bf 0.8268} & 0.2109 & 0.5749 \\
TSC & {\bf 0.7343} & 0.4878 & 0.6693 & {\bf 0.8514}\\
GSC & 0.6550 & 0.7071 & {\bf 0.9522} & 0.6306\\
LRR & 0.5500 & 0.6828 & 0.7129 & --\\
LRSSC & 0.5200 & 0.7088 & 0.6443 & -- \\
\midrule
Time (s) & \emph{COIL100} & \emph{YaleB} & \emph{USPS} & \emph{MNIST}   \\
\midrule
KSS & 53.53 & 6.90 & {\bf 8.85} & {\bf 30.53}\\ 
SSC & 912.25 & 136.36 & 1217.88 & -- \\
OMP & {\bf 12.12} & {\bf 1.02} & 31.12 & 398.37\\
TSC & {\bf 29.78} & {\bf 3.06} & {\bf 2.66} & {\bf 154.46}\\
GSC & 178.15 & 24.22 & 105.59 & 1800.00\\
LRR & 144.25 & 63.31 & 112.56 & -- \\
LRSSC & 1800.00 & 444.28 & 1800.00 & -- \\
\bottomrule
\multicolumn{4}{l}{\footnotesize ``--'' denotes out of memory.}
\end{tabular}
\end{center}
\vskip -0.15in
\end{table}

\subsection{Numerical Efficiency and Accuracy on Real Data}\label{subsec:test-real}

We now conduct experiments to examine the computational efficiency and recovery accuracy of the KSS method on real datasets. We also compare it with several state-of-the-art methods: SSC in \citet{elhamifar2013sparse}, SSC solved by OMP in \citet{you2016scalable}, TSC in \citet{heckel2015robust}, GSC in \citet{park2014greedy}, LRR in \citet{liu2012robust}, and LRSSC in \citet{wang2019provable}. In the implementations of SSC, OMP, LRR, and LRSSC, we use the source codes provided by their authors. We use the real datasets  \emph{COIL-100}~\cite{COIL100}, the cropped extended \emph{Yale B}~\cite{GeBeKr01}, \emph{USPS}~\cite{hull1994database}, and \emph{MNIST} \cite{lecun1998mnist}.\footnote{The datasets \emph{COIL-100}, \emph{Yale B}, and \emph{USPS} are downloaded from \href{http://www.cad.zju.edu.cn/home/dengcai/Data/data.html}{http://www.cad.zju.edu.cn/home/dengcai/Data/data.html}. The dataset \emph{MNIST} is downloaded from LIBSVM \cite{chang2011libsvm} at \href{https://www.csie.ntu.edu.tw/~cjlin/libsvmtools/datasets/}{https://www.csie.ntu.edu.tw/~cjlin/libsvmtools/datasets/}.} The stopping criteria for the tested methods are given as follows. For KSS, we terminate it when the norm of two consecutive iterates is less than $10^{-2}$. For SSC, LRR, and LRSSC, we use the stopping criteria in their source codes. No stopping criterion is needed for TSC and GSC due to their one-shot nature. We set the maximum iteration number of KSS, SSC, LRR, and LRSSC as 200. We set the maximum running time of all tested algorithms as 1800 seconds. %For the implementation of KSS, if the number of samples is more than $20000$, we use the random initialization in Algorithm \ref{alg-1}. 
For the implementation of KSS, we used the TIPS initialization except for on \emph{MNIST}, where we use random initialization in Algorithm \ref{alg-1}.
More details, including data processing, parameter settings, and test results, can be found in Section \ref{appen:expe} of the appendix. Then, we run each method 10 times. Note that if the algorithms are initialized deterministically, the only randomness is from the initialization for k-means in spectral clustering. To compare the computational efficiency and recovery accuracy of the tested methods, we report the average running time and best clustering accuracy for all runs of each method in Table \ref{table-2}. More experiment results can be also found in Section \ref{appen:expe} of the appendix. %It can be observed that the KSS method is comparable to these state-of-the-art methods in terms of the computational efficiency and recovery accuracy. 
It can be observed that the KSS method is in the top three in terms of \emph{both} accuracy and computational efficiency for every dataset. This demonstrates the efficiency and efficacy of the KSS method for SC.

\section{Concluding Remarks}\label{sec:con}

In this work, we analyzed the KSS method for subspace clustering and provided a TIPS method for its initialization in the semi-random UoS model. We showed that provided an initial assignment satisfying a partial recovery condition, the KSS method converges superlinearly  and achieves correct clustering within $\Theta(\log\log N)$ iterations, even when the normalized affinity between pairwise subspaces is $O(1)$. Moreover, we proved that the proposed initialization method can return a qualified initial point. All these results are demonstrated by the numerical results. A natural future direction is to study the convergence behavior and recovery performance of the KSS method in the noisy UoS model; see, e.g., \citet{heckel2015robust,soltanolkotabi2014robust,tschannen2018noisy,wang2013noisy}. 

%In this work, we proposed a projected gradient method for solving the ML formulation of the symmetric SBM. We showed that provided an initial point satisfying a mild partial recovery condition, this method achieves exact recovery to within a factor of $\sqrt{2}$ of the information-theoretic limit and runs in $\mO(n\log n/\log\log n)$ time in the logarithmic degree regime. Our numerical results show that our method may achieve exact recovery at the information-theoretic limit and has a good convergence performance even with a random initialization. Then, one natural future direction is to develop techniques to improve our theoretical results to the optimal threshold. Another direction is to study the convergence behavior of the proposed method with a random initialization. 
 
\section*{Acknowledgements}

%The first and last authors are supported in part by Army Research Office (ARO) Young Investigator Program (YIP) award W911NF1910027, in part by Air Force Office of Scientific Research (AFOSR) YIP award FA9550-19-1-0026, and in party by National Science Foundation (NSF) CAREER award CCF-1845076. The second author is supported by the National Natural Science Foundation of China (NSFC) Grant 72192832. The third author is supported in part by the Hong Kong Research Grants Council (RGC) General Research Fund (GRF) Project CUHK 14205421. The authors thank the reviewers for their insightful comments. They also thank John Lipor for sharing some data and codes. 

The first and last authors are supported in part by ARO YIP award W911NF1910027, in part by AFOSR YIP award FA9550-19-1-0026, and in party by NSF CAREER award CCF-1845076. The second author is supported by the National Natural Science Foundation of China (NSFC) Grant 72192832. The third author is supported in part by the Hong Kong Research Grants Council (RGC) General Research Fund (GRF) Project CUHK 14205421. The authors thank the reviewers for their insightful comments. They also thank John Lipor for sharing some data and code. 
 
\bibliographystyle{icml2022}
\bibliography{subspace-clustering.bib}

\newpage
\onecolumn
\begin{appendix}
\begin{center}
{\Large \bf Appendix}
\end{center}
\vspace{-0.1in}
\par\noindent\rule{\textwidth}{1pt}
\setcounter{section}{0}

\renewcommand\thesection{\Alph{section}}

In the appendix, we provide proofs of the technical results presented in Sections \ref{sec:preli} and \ref{sec:pf-main}. To proceed, we introduce some further notations.  Given a vector $\ba \in \R^n$, we denote by $\mathrm{diag}(\ba) \in \R^{n\times n}$ the diagonal matrix with $\ba$ on its diagonal. Given a symmetric matrix $\bA$, we use $\lambda_{\min}(\bA)$ to denote its smallest eigenvalue. We respectively use $\bo_n$, $\bE_n$, and $\bI_d$ to denote the $n$-dimensional all-one vector, $n\times n$ all-one matrix, and $d\times d$ identity matrix, and simply write $\bo$, $\bE$, and $\bI$ when their dimension can be inferred from the context. Given two random variables $X$ and $Y$, we write $X\overset{d}{=}Y$ if $X$ and $Y$ are equal in distribution. We use $\be_i$ to denote a standard basis with a 1 in the $i$-th coordinate and $0$'s elsewhere. For a vector $\bx \in \R^n$, we denote by $\bx_S$ its subvector consisting of the elements indexed by the set $S$. We denote the cumulative distribution function of the standard normal distribution by
\begin{align*}
\Phi(x) = \frac{1}{\sqrt{2\pi}} \int_{-\infty}^x e^{-\frac{t^2}{2}} dt. 
\end{align*} 
For any random vector $\ba \sim \mathrm{Unif}(\S^{d-1})$, it is known that there exists a standard normal random vector such that $\ba$ is its normalization. We denote such vector by $\bba$. Thus, it holds that
\begin{align}\label{eq:bar-a}
\bba \sim \mN(\b0,\bI_d),\ \ba=\frac{\bba}{\|\bba\|}.
\end{align}
Moreover, let 
\begin{align}\label{svd}
\bU^{*^T}_k\bU^*_\ell = \bU^*_{k\ell}\bm{\Sigma}^*_{k\ell}\bV_{k\ell}^{*^T}
\end{align}
be a singular value decomposition (SVD) of $\bU^{*^T}_k\bU^*_\ell$, where $\sigma_{k\ell}^{(1)} \ge \dots \ge \sigma_{k\ell}^{(\min\{d_k,d_\ell\})} \ge 0$ are the singular values of $\bU^{*^T}_k\bU^*_\ell$ and $\bU^*_{k\ell} \in \mO^{d_k},\bV^*_{k\ell} \in \mO^{d_\ell}$. Suppose that $d_k \ge d_\ell$. We have
\begin{align}\label{d:k>l}
\bU^{*^T}_k\bU^*_\ell = \bU^*_{k\ell} \begin{bmatrix}
\bar{\bm{\Sigma}}_{k\ell}^* \\
\bm{0}
\end{bmatrix}\bV_{k\ell}^{*^T},
\end{align}
where $\bar{\bm{\Sigma}}_{k\ell}^* = \mathrm{diag}\left(\sigma_{k\ell}^{(1)},\dots,\sigma_{k\ell}^{(d_\ell)}  \right)$.  
Suppose to the contrary that $d_k < d_\ell$. Then, we have
\begin{align}\label{d:k<l}
\bU^{*^T}_k\bU^*_\ell = \bU^*_{k\ell} \begin{bmatrix}
\bar{\bm{\Sigma}}_{k\ell}^* & 
\bm{0}
\end{bmatrix}\bV_{k\ell}^{*^T},
\end{align}
where $\bar{\bm{\Sigma}}_{k\ell}^* = \mathrm{diag}\left(\sigma_{k\ell}^{(1)},\dots,\sigma_{k\ell}^{(d_k)}  \right)$. According to \eqref{affi:two} and \eqref{svd}, one can verify that
\begin{align}\label{eq:aff=F}
\mathrm{aff}(S_k^*,S_\ell^*) =   \|\bm{\Sigma}_{k\ell}^*\|_F.
\end{align}
Recall that for any $\bU,\bV \in \mO^{n\times d}$, we use
$ d(\bU,\bV) =  \left\| \bU\bU^T-\bV\bV^T \right\|$ to denote the distance between the subspaces respectively spanned by $\bU$ and $\bV$.  
Then, one can verify 
\begin{align}\label{dist:subsp}
d(\bU,\bV) = \left\| (\bI-\bU\bU^T)\bV \right\| =  \left\| (\bI-\bV\bV^T)\bU\right\| = \sqrt{1-\sigma^2_{\min}(\bU^T\bV)}. 
\end{align}

\section{Concentration Inequalities}

In this section, we present some concentration inequalities for random vectors. These inequalities play an important role in the analysis of the proposed method. We first introduce a spectral bound on the covariance estimation for random vectors generated by a uniform distribution over the sphere. It is a direct consequence of \citet[Theorem 4.7.1]{vershynin2018high} and thus we omit its proof.

\begin{lemma}\label{lem:cov-esti}
Suppose that $\ba_1,\dots,\ba_m \in \R^d$  are i.i.d. uniformly distributed over the unit sphere. Then, it holds with probability at least $1-2e^{-u}$ that 
\begin{align*}
\left\| \frac{1}{m} \sum_{i=1}^m \ba_i\ba_i^T  - \frac{1}{d}\bI_d \right\| \le \frac{c_1}{d} \left( \sqrt{\frac{d+u}{m}}  + \frac{d+u}{m}\right),
\end{align*}
where $c_1 > 0$ is an absolute constant. 
\end{lemma}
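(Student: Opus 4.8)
The plan is to reduce the claim to the covariance-estimation bound for isotropic sub-gaussian random vectors, namely the high-probability form of \citet[Theorem 4.7.1]{vershynin2018high}, via a trivial rescaling. Under this reduction the only genuine content is to verify that the effective sub-gaussian parameter of $\ba_i$ is an \emph{absolute} constant, independent of the ambient dimension $d$; everything else is bookkeeping. First I would identify the population covariance. Since $\mathrm{Unif}(\S^{d-1})$ is rotationally invariant, $\E[\ba_i\ba_i^T]$ commutes with every orthogonal matrix and is therefore a scalar multiple of $\bI_d$; taking traces and using $\|\ba_i\|=1$ forces $\E[\ba_i\ba_i^T]=\frac{1}{d}\bI_d$. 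I would then set $\bx_i=\sqrt{d}\,\ba_i$, so that $\bx_1,\dots,\bx_m$ are i.i.d.\ \emph{isotropic} random vectors (covariance $\bI_d$), and record the identity $\bx_i\bx_i^T=d\,\ba_i\ba_i^T$, so that
\[
\frac{1}{m}\sum_{i=1}^m \bx_i\bx_i^T - \bI_d \;=\; d\left(\frac{1}{m}\sum_{i=1}^m \ba_i\ba_i^T - \frac{1}{d}\bI_d\right).
\]
Consequently, any spectral deviation bound for $\frac{1}{m}\sum_i \bx_i\bx_i^T - \bI_d$ yields the desired bound after dividing through by $d$.

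The step I expect to be the crux is controlling the sub-gaussian norm of $\bx_i$. The relevant fact is concentration of measure on the sphere: for any fixed unit vector $\bv$, the one-dimensional marginal $\langle \ba_i,\bv\rangle$ is sub-gaussian with $\|\langle\ba_i,\bv\rangle\|_{\psi_2}=\Theta(1/\sqrt{d})$, uniformly over $\bv\in\S^{d-1}$. Hence $\|\ba_i\|_{\psi_2}=\Theta(1/\sqrt{d})$ and, after rescaling, $\|\bx_i\|_{\psi_2}=\sqrt{d}\,\|\ba_i\|_{\psi_2}=O(1)$, an absolute constant. This dimension-free bound is exactly what is needed so that the constant $c_1$ appearing in the conclusion does not secretly carry a hidden dependence on $d$; without it the whole estimate would be useless for the downstream application in Lemma \ref{lem:spec-B-kl}.

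With the isotropy and the absolute sub-gaussian bound in hand, I would invoke the tail version of \citet[Theorem 4.7.1]{vershynin2018high}, which states that for isotropic sub-gaussian vectors with $\|\bx_i\|_{\psi_2}\le K$ one has, with probability at least $1-2e^{-u}$,
\[
\left\|\frac{1}{m}\sum_{i=1}^m \bx_i\bx_i^T - \bI_d\right\| \;\le\; C K^2\left(\sqrt{\frac{d+u}{m}}+\frac{d+u}{m}\right).
\]
Here $u$ replaces $d$ as the additive deviation term and the failure probability $2e^{-u}$ matches the statement; I would note that this tail form follows from the in-expectation version of the theorem together with the stated sub-gaussian concentration. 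Substituting $K=O(1)$, dividing by $d$ using the displayed identity, and absorbing the universal constants into a single $c_1$ then delivers precisely
\[
\left\|\frac{1}{m}\sum_{i=1}^m \ba_i\ba_i^T - \frac{1}{d}\bI_d\right\| \;\le\; \frac{c_1}{d}\left(\sqrt{\frac{d+u}{m}}+\frac{d+u}{m}\right),
\]
which is the claim. The only subtlety worth double-checking is that the additive term inside Vershynin's bound is genuinely $d+u$ (not $d$ plus a separate tail), so that the single parameter $u$ simultaneously sets both the deviation level and the failure probability.
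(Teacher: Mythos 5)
Your proposal is correct and takes essentially the same route as the paper: the paper gives no proof at all, stating that the lemma "is a direct consequence of \citet[Theorem 4.7.1]{vershynin2018high}," and your argument---identifying $\E[\ba_i\ba_i^T]=\tfrac{1}{d}\bI_d$, rescaling to the isotropic vectors $\bx_i=\sqrt{d}\,\ba_i$ with dimension-free sub-gaussian norm via spherical concentration, and invoking the tail form of that covariance-estimation theorem before dividing by $d$---is exactly the bookkeeping that citation suppresses. The lone imprecision is your remark that the tail version follows from the in-expectation statement; it is really the high-probability form (Exercise 4.7.3 in the same reference, proved by the same net-plus-Bernstein argument), but this is the standard result the paper itself is implicitly citing, so there is no gap.
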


We next present a bound on the deviation of the weighted sum of standard normal random variables from its mean. This is an extension of \citet[Lemma 7]{li2019theory}. 
\begin{lemma}\label{lem:norm-gaus}
Let $\bx \in \R^d$ be a normal random vector such that $\bx \sim \mN(\b0,\sigma^2\bI_d)$.
It holds for $\lambda_1,\dots,\lambda_d \in [0,1]$ with $\sum_{i=1}^d \lambda_i^2 \ge 4$ and $t > 0$ that  
\begin{align*}
\P\left(  \left|\sqrt{\sum_{i=1}^d \lambda_i^2x_i^2} - \sigma\sqrt{\sum_{i=1}^d \lambda_i^2}\right| \ge t + 2\sigma\right) \le 2\exp\left( -\frac{t^2}{2\sigma^2} \right). 
\end{align*}
\end{lemma}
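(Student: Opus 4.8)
The plan is to prove the concentration bound in Lemma \ref{lem:norm-gaus} by viewing the quantity $R := \sqrt{\sum_{i=1}^d \lambda_i^2 x_i^2}$ as the Euclidean norm of the random vector $\bm{\Lambda}\bx$, where $\bm{\Lambda} = \mathrm{diag}(\lambda_1,\dots,\lambda_d)$, and then exploiting the Lipschitz concentration of Gaussian measure together with an estimate of $\E[R]$. Concretely, since $\bx \sim \mN(\b0,\sigma^2\bI_d)$, the map $\bx \mapsto \|\bm{\Lambda}\bx\|$ is Lipschitz with constant $\max_i \lambda_i \le 1$ (because each $\lambda_i \in [0,1]$), so the standard Gaussian concentration inequality for Lipschitz functions gives $\P\left( \left| \|\bm{\Lambda}\bx\| - \E\|\bm{\Lambda}\bx\| \right| \ge t \right) \le 2\exp\left( -\frac{t^2}{2\sigma^2} \right)$ for every $t>0$. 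The bulk of the work is then to relate the median/mean $\E\|\bm{\Lambda}\bx\|$ to the target centering term $\sigma\sqrt{\sum_{i=1}^d \lambda_i^2}$, since the lemma states concentration around the latter, not around the exact mean.

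First I would normalize by writing $\bx = \sigma \bm{g}$ with $\bm{g}\sim\mN(\b0,\bI_d)$, so that $R = \sigma\|\bm{\Lambda}\bm{g}\|$ and everything reduces to the unit-variance case scaled by $\sigma$. The key gap-closing step is to bound $\left| \E\|\bm{\Lambda}\bm{g}\| - \sqrt{\sum_i \lambda_i^2} \right|$. By Jensen's inequality, $\E\|\bm{\Lambda}\bm{g}\| \le \sqrt{\E\|\bm{\Lambda}\bm{g}\|^2} = \sqrt{\sum_i \lambda_i^2}$, giving one direction for free. For the reverse direction I would control the variance: since $\mathrm{Var}(\|\bm{\Lambda}\bm{g}\|) \le 1$ (again by the Lipschitz-$1$ property, as $\mathrm{Var}$ of a $1$-Lipschitz function of a standard Gaussian is at most $1$), we get $\left(\E\|\bm{\Lambda}\bm{g}\|\right)^2 \ge \E\|\bm{\Lambda}\bm{g}\|^2 - \mathrm{Var}(\|\bm{\Lambda}\bm{g}\|) \ge \sum_i \lambda_i^2 - 1$. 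Using the hypothesis $\sum_i \lambda_i^2 \ge 4$ together with the elementary inequality $\sqrt{a^2 - 1} \ge a - \frac{1}{a} \ge a - 2$ when $a \ge 2$ (where $a = \sqrt{\sum_i \lambda_i^2}$), this yields $\E\|\bm{\Lambda}\bm{g}\| \ge \sqrt{\sum_i \lambda_i^2} - 2$. Hence $\left| \E\|\bm{\Lambda}\bm{g}\| - \sqrt{\sum_i \lambda_i^2} \right| \le 2$, and after rescaling, $\left| \E\|\bm{\Lambda}\bx\| - \sigma\sqrt{\sum_i \lambda_i^2}\right| \le 2\sigma$.

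Combining the two pieces finishes the proof: on the event where $\left| \|\bm{\Lambda}\bx\| - \E\|\bm{\Lambda}\bx\| \right| < t$, the triangle inequality gives $\left| \|\bm{\Lambda}\bx\| - \sigma\sqrt{\sum_i \lambda_i^2} \right| < t + 2\sigma$, so the complementary event $\left\{ \left| R - \sigma\sqrt{\sum_i \lambda_i^2}\right| \ge t + 2\sigma\right\}$ is contained in $\left\{ \left| R - \E R \right| \ge t \right\}$, whose probability is at most $2\exp\left( -\frac{t^2}{2\sigma^2} \right)$ by Gaussian concentration. The main obstacle, and the only step requiring real care, is the mean-versus-target comparison: I expect the clean handling of the lower bound $\E\|\bm{\Lambda}\bm{g}\| \ge \sqrt{\sum_i \lambda_i^2} - 2$ via the variance bound to be the crux, and this is precisely where the hypothesis $\sum_i \lambda_i^2 \ge 4$ is used to keep the additive slack at the stated value $2\sigma$. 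An alternative to the variance argument would be a direct second-moment/fourth-moment estimate, but routing through the $1$-Lipschitz variance bound is cleaner and dovetails with the concentration inequality already invoked.
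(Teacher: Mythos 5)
Your proposal is correct and follows essentially the same route as the paper's proof: Gaussian concentration for the $1$-Lipschitz map $\bx \mapsto \sqrt{\sum_{i=1}^d \lambda_i^2 x_i^2}$, Jensen's inequality for the upper bound $\E[f(\bx)] \le \sigma\sqrt{\sum_{i=1}^d \lambda_i^2}$, a variance bound to obtain the lower bound $\E[f(\bx)] \ge \sigma\bigl(\sqrt{\sum_{i=1}^d \lambda_i^2}-2\bigr)$, and the triangle inequality to recenter the concentration at $\sigma\sqrt{\sum_{i=1}^d \lambda_i^2}$. The only difference is the variance sub-step, where you invoke the Gaussian Poincar\'e inequality ($\mathrm{Var} \le \sigma^2$ for a $1$-Lipschitz function) while the paper integrates its own tail bound by parts to get $\mathrm{Var} \le 4\sigma^2$; both are valid, and yours is in fact slightly sharper, with the hypothesis $\sum_{i=1}^d \lambda_i^2 \ge 4$ serving only to keep the additive slack at $2\sigma$.
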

\begin{proof}[Proof of Lemma \ref{lem:norm-gaus}] 
We define
\begin{align*}
f(\bx) = \sqrt{\sum_{i=1}^d \lambda_i^2x_i^2}. 
\end{align*}
By calculation, we obtain
\begin{align*}
\|\nabla f(\bx)\| = \sqrt{\frac{\sum_{i=1}^d\lambda_i^4x_i^2}{\sum_{i=1}^d\lambda_i^2x_i^2}} \le 1. 
\end{align*}
Applying the concentration inequality for Lipschitz functions (see, e.g., \citet[Lemma 6]{li2019theory})) to $f(\bx)$ yields that 
\begin{align}\label{eq1:lem-norm-gaus}
\P\left( \left| f(\bx) - \E[f(\bx)] \right| \ge t \right) \le 2\exp\left( -\frac{t^2}{2\sigma^2} \right).  
\end{align}
We first note that
\begin{align}\label{eq2:lem-norm-gaus}
\E[f(\bx)] \le \sqrt{\E[f^2(\bx)]} = \sqrt{\E\left[ \sum_{i=1}^d \lambda_i^2x_i^2 \right]}  = \sigma\sqrt{\sum_{i=1}^d\lambda_i^2}.
\end{align}
By letting $X = f(\bx) \ge 0$ and $\mu = \E[X]$, we can compute
\begin{align*}
\mathrm{Var}\left( X \right) & = \E\left[(X-\mu)^2 \right]  =  \int_0^\infty t^2 d\P\left(  \left|X-\mu  \right|\le t \right)   = -\int_0^\infty t^2 d\P\left(  \left|X-\mu  \right| > t \right)\\
&  = \int_0^\infty 2t \P\left(  \left|X-\mu  \right| > t \right) dt  \le  \int_0^\infty 4t \exp\left( -\frac{t^2}{2\sigma^2} \right) dt = 4\sigma^2,
\end{align*} 
where the forth equality and the last one follow from integration by parts and the inequality is due to \eqref{eq1:lem-norm-gaus}. Thus, we have
\begin{align*}
\E^2[f(\bx)] & = \E[f^2(\bx)] - \mathrm{Var}\left( f(\bx) \right) = \E\left[ \sum_{i=1}^d \lambda_i^2x_i^2 \right] - \mathrm{Var}\left( f(\bx) \right) \ge \sigma^2 \left(\sum_{i=1}^d\lambda_i^2 - 4 \right).
\end{align*}
This, together with $\sum_{i=1}^d \lambda_i^2 \ge 4$, implies
\begin{align}\label{eq3:lem-norm-gaus}
\E[f(\bx)] \ge \sigma\sqrt{ \sum_{i=1}^d\lambda_i^2 - 4 } \ge \sigma\left( \sqrt{\sum_{i=1}^d\lambda_i^2}-2 \right). 
\end{align}
Plugging \eqref{eq2:lem-norm-gaus} and \eqref{eq3:lem-norm-gaus} into \eqref{eq1:lem-norm-gaus} yields the desired result. 
\end{proof}

Equipped with the above results, we are ready to present a lemma that characterizes the properties of a uniform distribution over the sphere. This plays an important role in the subsequent analysis. 
\begin{lemma}\label{lem:norm-unit} 
Suppose that $\|\bm{\Sigma}_{k\ell}^*\|_F \ge 2$ for all $1 \le k\neq \ell \le K$ and $\ba_i \overset{i.i.d.}{\sim} \mathrm{Unif}(\S^{d_\ell-1})$ for all $i\in [N]$. Then, it holds with probability at least $1-4N^{-2}$ that for some $i\in [N]$ and $1 \le k\neq \ell \le K$,
\begin{align}\label{rst1:lem-norm-unit}
\left|\|\bar{\ba}_i\| - \sqrt{d_\ell}\right| \le  \alpha,\quad \left|\|\bm{\Sigma}^*_{k\ell}\bar{\ba}_i\| - \|\bm{\Sigma}^*_{k\ell}\|_F\right| \le \alpha,
\end{align}
and 
\begin{align}\label{rst2:lem-norm-unit}
  \frac{\|\bm{\Sigma}^*_{k\ell}\|_F-\alpha}{\sqrt{d_\ell}+\alpha} \le  \|\bm{\Sigma}^*_{k\ell}\ba_i\| \le \frac{\|\bm{\Sigma}^*_{k\ell}\|_F+\alpha}{\sqrt{d_\ell}-\alpha},
\end{align}
where $\alpha=2\sqrt{\log N}+2$.
\end{lemma}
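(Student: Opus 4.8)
The plan is to reduce the lemma to two Gaussian concentration statements and one purely deterministic ratio bound. By the representation in \eqref{eq:bar-a}, the sample $\ba_i$ is the normalization of a standard Gaussian vector $\bar{\ba}_i \sim \mN(\b0,\bI_{d_\ell})$, i.e. $\ba_i = \bar{\ba}_i/\|\bar{\ba}_i\|$. Consequently $\|\bm{\Sigma}^*_{k\ell}\ba_i\| = \|\bm{\Sigma}^*_{k\ell}\bar{\ba}_i\|/\|\bar{\ba}_i\|$, so once \eqref{rst1:lem-norm-unit} controls both the numerator $\|\bm{\Sigma}^*_{k\ell}\bar{\ba}_i\|$ and the denominator $\|\bar{\ba}_i\|$, the two-sided bound \eqref{rst2:lem-norm-unit} follows by elementary algebra. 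Hence the whole argument hinges on establishing \eqref{rst1:lem-norm-unit} for the fixed index $i$ and the fixed pair $(k,\ell)$.

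To prove \eqref{rst1:lem-norm-unit} I would invoke Lemma \ref{lem:norm-gaus} twice with $\sigma = 1$. For the first bound, write $\|\bar{\ba}_i\| = \sqrt{\sum_{j=1}^{d_\ell} \bar a_{ij}^2}$ and apply the lemma with weights $\lambda_j = 1$; since $\sum_j \lambda_j^2 = d_\ell \ge 4$ in the regime $d_\ell \gtrsim \log N$, taking $t = 2\sqrt{\log N}$ gives $t+2 = \alpha$ and failure probability $2\exp(-t^2/2) = 2N^{-2}$, so $|\,\|\bar{\ba}_i\| - \sqrt{d_\ell}\,| \le \alpha$ with probability at least $1 - 2N^{-2}$. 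For the second bound, recall that $\bm{\Sigma}^*_{k\ell}$ is the diagonal matrix of singular values $\sigma_{k\ell}^{(1)},\dots$ of $\bU_k^{*T}\bU_\ell^*$, so that $\|\bm{\Sigma}^*_{k\ell}\bar{\ba}_i\| = \sqrt{\sum_j (\sigma_{k\ell}^{(j)})^2 \bar a_{ij}^2}$; applying Lemma \ref{lem:norm-gaus} with $\lambda_j = \sigma_{k\ell}^{(j)}$ and again $t = 2\sqrt{\log N}$ yields $|\,\|\bm{\Sigma}^*_{k\ell}\bar{\ba}_i\| - \|\bm{\Sigma}^*_{k\ell}\|_F\,| \le \alpha$ with probability at least $1 - 2N^{-2}$, where I use that $\sqrt{\sum_j (\sigma_{k\ell}^{(j)})^2} = \|\bm{\Sigma}^*_{k\ell}\|_F$ by definition of the Frobenius norm. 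A union bound over these two events gives \eqref{rst1:lem-norm-unit} with probability at least $1 - 4N^{-2}$.

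Finally, on the event \eqref{rst1:lem-norm-unit} I would substitute into the ratio: since $\|\bar{\ba}_i\| \in [\sqrt{d_\ell} - \alpha,\ \sqrt{d_\ell} + \alpha]$ and $\|\bm{\Sigma}^*_{k\ell}\bar{\ba}_i\| \in [\|\bm{\Sigma}^*_{k\ell}\|_F - \alpha,\ \|\bm{\Sigma}^*_{k\ell}\|_F + \alpha]$, dividing the smallest numerator by the largest denominator and vice versa gives exactly \eqref{rst2:lem-norm-unit}, which therefore holds on the same event.

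The step I would be most careful about is not a genuine obstacle but the verification of the hypotheses of Lemma \ref{lem:norm-gaus} in both applications: that each weight lies in $[0,1]$ (immediate for $\lambda_j = 1$, and for $\lambda_j = \sigma_{k\ell}^{(j)}$ because the singular values of $\bU_k^{*T}\bU_\ell^*$ with $\bU_k^*,\bU_\ell^* \in \mO^{n\times d}$ never exceed $1$), and that $\sum_j \lambda_j^2 \ge 4$ (for the first application this is $d_\ell \ge 4$; for the second it is \emph{precisely} the standing assumption $\|\bm{\Sigma}^*_{k\ell}\|_F \ge 2$). I would also record that the denominator bound in \eqref{rst2:lem-norm-unit} is meaningful only when $\sqrt{d_\ell} > \alpha$, which holds once $d_\ell \gtrsim \log N$ with a large enough constant; this is the only place the dimension lower bound enters, and when $\|\bm{\Sigma}^*_{k\ell}\|_F < \alpha$ the lower bound in \eqref{rst2:lem-norm-unit} is automatically valid since its left-hand side is a norm.
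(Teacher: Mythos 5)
Your proposal is correct and follows essentially the same route as the paper's proof: two applications of Lemma \ref{lem:norm-gaus} with $t = 2\sqrt{\log N}$ (weights $\lambda_j = 1$ and $\lambda_j = \sigma_{k\ell}^{(j)}$, whose hypotheses you verify from $\sigma_{k\ell}^{(j)} \le 1$ and $\|\bm{\Sigma}^*_{k\ell}\|_F \ge 2$), a union bound giving $1-4N^{-2}$, and then the ratio $\ba_i = \bar{\ba}_i/\|\bar{\ba}_i\|$ for \eqref{rst2:lem-norm-unit}. The only cosmetic difference is that the paper handles the rectangular shape of $\bm{\Sigma}^*_{k\ell}$ by explicitly splitting into the cases $d_k \ge d_\ell$ and $d_k < d_\ell$ (applying the Gaussian lemma to a subvector in the latter case), which your uniform treatment subsumes since zero weights are permitted in Lemma \ref{lem:norm-gaus}.
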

\begin{proof}[Proof of Lemma \ref{lem:norm-unit}] 
We first prove \eqref{rst1:lem-norm-unit}. Applying Lemma \ref{lem:norm-gaus} with $t=2\sqrt{\log N}$ and $\lambda_j=1$ for all $j\in [d_\ell]$ to $\bar{\ba}_i \sim \mN(\b0,\bI_{d_\ell})$ yields that 
\begin{align}\label{eq1:lem:norm-unit}
\P\left(\left|\|\bar{\ba}_i\| - \sqrt{d_\ell}\right| \ge \alpha \right) \le 2N^{-2}. 
\end{align}
Suppose that $d_k \ge d_\ell$. According to \eqref{d:k>l}, we have 
$\bm{\Sigma}_{k\ell}^* = \begin{bmatrix}
\bar{\bm{\Sigma}}_{k\ell}^* \\
\bm{0}
\end{bmatrix}$. 
Applying Lemma \ref{lem:norm-gaus} with $t=2\sqrt{\log N}$ and $\lambda_j=\sigma_{k\ell}^{(j)}$ for all $j\in [d_\ell]$ to $\bar{\ba}_i \sim \mN(\b0,\bI_{d_\ell})$ yields
\begin{align*}
\P\left( \left|\|\bar{\bm{\Sigma}}^*_{k\ell}\bar{\ba}_i\| - \|\bar{\bm{\Sigma}}^*_{k\ell}\|_F\right| \ge \alpha \right) \le 2N^{-2}.
\end{align*}
This, together with $\|\bm{\Sigma}^*_{k\ell}\bar{\ba}_i\| = \|\bar{\bm{\Sigma}}^*_{k\ell}\bar{\ba}_i\|$ and $\|\bm{\Sigma}^*_{k\ell}\|_F = \|\bar{\bm{\Sigma}}^*_{k\ell}\|_F$, implies
\begin{align}\label{eq2:lem:norm-unit}
\P\left( \left|\|\bm{\Sigma}^*_{k\ell}\bar{\ba}_i\| - \|\bm{\Sigma}^*_{k\ell}\|_F\right| \ge \alpha \right) \le 2N^{-2}.
\end{align}
Suppose to the contrary that $d_k < d_\ell$. According to \eqref{d:k<l}, we have 
$\bm{\Sigma}_{k\ell}^*\bar{\ba}_i = \left(
\sigma_{k\ell}^{(1)}\bar{a}_{i1}, \dots,  \sigma_{k\ell}^{(d_k)}\bar{a}_{id_k} \right)$. Applying Lemma \ref{lem:norm-gaus} with $t=2\sqrt{\log N}$ and $\lambda_j=\sigma_{k\ell}^{(j)}$ for all $j\in [d_k]$ to $[\bar{\ba}_i]_{S} \sim \mN(\b0,\bI_{d_k})$ with $S=[d_k]$ yields
\begin{align*}
\P\left( \left|\|\bm{\Sigma}^*_{k\ell}\bar{\ba}_i\| - \|\bm{\Sigma}^*_{k\ell}\|_F\right| \ge \alpha \right) \le 2N^{-2}.
\end{align*}
This, together with \eqref{eq1:lem:norm-unit}, \eqref{eq2:lem:norm-unit}, and the union bound, implies \eqref{rst1:lem-norm-unit}. 

We next prove \eqref{rst2:lem-norm-unit} using \eqref{rst1:lem-norm-unit}. Using $\ba_i=\bar{\ba}_i/\|\bba_i\|$ and \eqref{rst1:lem-norm-unit}, we have
\begin{align*} 
 \|\bm{\Sigma}^*_{k\ell}\ba_i\| = \frac{\|\bm{\Sigma}^*_{k\ell}\bar{\ba}_i\|}{\|\bar{\ba}_i\|} \le   \frac{\|\bm{\Sigma}^*_{k\ell}\|_F+\alpha}{\sqrt{d_\ell}-\alpha}
\end{align*}
and 
\begin{align*} 
\|\bm{\Sigma}^*_{k\ell}\ba_i\| = \frac{\|\bm{\Sigma}^*_{k\ell}\bar{\ba}_i\|}{\|\bar{\ba}_i\|}  \ge   \frac{\|\bm{\Sigma}^*_{k\ell}\|_F-\alpha}{\sqrt{d_\ell}+\alpha}.
\end{align*}
Then, we complete the proof. 
\end{proof}

Then, we present a lemma that estimates the magnitudes of some crucial parameters in our analysis. 
\begin{lemma}\label{lem:norm-Uz}
Suppose that $\bz_i  \in \R^N$ are generated according to the semi-random UoS model such that $\bz_i \in S_{\ell}^*$. Then, for any $1 \le i \neq j \le N$ and $k \in [K]$, it holds  with probability at least $1 - 5K^2/N $ that 
\begin{align}\label{rst1:eq-norm-Uz}
 \frac{\mathrm{aff}(S_k^*,S_{\ell}^*)-\alpha}{\sqrt{d_\ell}+\alpha} \le  \|\bU_k^{*^T}\bz_i\| \le \frac{\mathrm{aff}(S_k^*,S_\ell^*)+\alpha}{\sqrt{d_\ell}-\alpha} 
\end{align}
and
\begin{align}\label{rst2:eq-norm-Uz}
\left|\frac{\langle \bU_k^{*^T}\bz_i,  \bU_k^{*^T}\bz_j\rangle}{\|\bU_k^{*^T}\bz_j\|}\right| \le \frac{2\sqrt{\log N}}{\sqrt{d_\ell}-\alpha},
\end{align}
where $\alpha=2\sqrt{\log N}+2$.
\end{lemma}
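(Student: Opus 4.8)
The plan is to prove Lemma \ref{lem:norm-Uz} by reducing both statements to the properties of a uniform vector on the sphere established in Lemma \ref{lem:norm-unit}, exploiting the key observation that $\bz_i = \bU_\ell^* \ba_i$ with $\ba_i \sim \mathrm{Unif}(\S^{d_\ell-1})$. First I would write $\bU_k^{*^T}\bz_i = \bU_k^{*^T}\bU_\ell^* \ba_i = \bm{\Sigma}_{k\ell}^* \bar{\ba}_i / \|\bar{\ba}_i\|$ up to the orthogonal factors from the SVD \eqref{svd}; since $\bU_{k\ell}^*$ and $\bV_{k\ell}^*$ are orthogonal and $\bar{\ba}_i$ is rotationally invariant Gaussian, we have $\|\bU_k^{*^T}\bz_i\| = \|\bm{\Sigma}_{k\ell}^* \ba_i\|$ in distribution. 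Then \eqref{rst1:eq-norm-Uz} follows directly from \eqref{rst2:lem-norm-unit} of Lemma \ref{lem:norm-unit} after substituting $\|\bm{\Sigma}_{k\ell}^*\|_F = \mathrm{aff}(S_k^*, S_\ell^*)$ via \eqref{eq:aff=F}.

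For the second bound \eqref{rst2:eq-norm-Uz}, I would fix the denominator by noting that $\|\bU_k^{*^T}\bz_j\| \ge (\mathrm{aff}(S_k^*,S_\ell^*)-\alpha)/(\sqrt{d_\ell}+\alpha)$ already holds on the good event from \eqref{rst1:eq-norm-Uz}, so it suffices to control the numerator $\langle \bU_k^{*^T}\bz_i, \bU_k^{*^T}\bz_j\rangle = \bz_i^T \bU_k^* \bU_k^{*^T} \bz_j$. The idea is to condition on $\bz_j$ (equivalently on $\ba_j$): with $\bz_j$ fixed, the vector $\bU_k^* \bU_k^{*^T}\bz_j$ is a fixed vector, and $\langle \bz_i, \bU_k^* \bU_k^{*^T}\bz_j\rangle = \ba_i^T \bU_\ell^{*^T}\bU_k^*\bU_k^{*^T}\bz_j$ is a linear functional of the uniform vector $\ba_i = \bar{\ba}_i / \|\bar{\ba}_i\|$. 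Writing it as $\langle \bar{\ba}_i, \bw\rangle / \|\bar{\ba}_i\|$ for the fixed vector $\bw = \bU_\ell^{*^T}\bU_k^*\bU_k^{*^T}\bz_j$, the inner product $\langle \bar{\ba}_i, \bw\rangle$ is a one-dimensional Gaussian with standard deviation $\|\bw\| \le \|\bU_k^{*^T}\bz_j\|$, so a standard Gaussian tail bound with $t = 2\sqrt{\log N}$ gives $|\langle \bar{\ba}_i,\bw\rangle| \le 2\|\bw\|\sqrt{\log N}$ with probability at least $1 - 2N^{-2}$. Combined with the lower bound $\|\bar{\ba}_i\| \ge \sqrt{d_\ell}-\alpha$ from \eqref{rst1:lem-norm-unit} and the cancellation $\|\bw\| \le \|\bU_k^{*^T}\bz_j\|$ against the denominator, this yields the claimed bound $2\sqrt{\log N}/(\sqrt{d_\ell}-\alpha)$.

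Finally I would union-bound over all pairs $1 \le i \neq j \le N$ and all $k \in [K]$ to collect the failure probabilities into the stated $5K^2/N$, being careful that Lemma \ref{lem:norm-unit} is invoked per index and per subspace pair and that the conditioning on $\bz_j$ preserves independence of $\ba_i$. The main obstacle I anticipate is bookkeeping the conditional argument cleanly: I must ensure that fixing $\bz_j$ genuinely leaves $\ba_i$ uniformly distributed and independent, so that the Gaussian tail bound on $\langle \bar{\ba}_i, \bw\rangle$ applies with $\bw$ treated as deterministic. A secondary subtlety is verifying that the hypothesis $\|\bm{\Sigma}_{k\ell}^*\|_F \ge 2$ required by Lemma \ref{lem:norm-unit} is consistent with Assumption \ref{AS:2}'s condition \eqref{affi:AS2}, namely $\mathrm{aff}(S_k^*,S_\ell^*) \gtrsim \log N$, so that all invocations are legitimate; since $\|\bm{\Sigma}_{k\ell}^*\|_F = \mathrm{aff}(S_k^*,S_\ell^*)$, this is immediate for large $N$.
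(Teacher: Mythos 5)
Your proposal is correct and follows essentially the same route as the paper's proof: for \eqref{rst1:eq-norm-Uz} it uses rotational invariance and the SVD to reduce $\|\bU_k^{*^T}\bz_i\|$ to $\|\bm{\Sigma}_{k\ell}^*\ba_i\|$ and invokes Lemma \ref{lem:norm-unit} with \eqref{eq:aff=F}, and for \eqref{rst2:eq-norm-Uz} it conditions on $\bz_j$, views the numerator as a Gaussian inner product with $\bw=\bU_\ell^{*^T}\bU_k^*\bU_k^{*^T}\bz_j$, applies the tail bound at level $2\sqrt{\log N}$ together with the cancellation $\|\bw\|\le\|\bU_k^{*^T}\bz_j\|$ and the lower bound $\|\bba_i\|\ge\sqrt{d_\ell}-\alpha$, finishing with a union bound. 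This matches the paper's argument step for step, so no further comparison is needed.
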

\begin{proof}[Proof of Lemma \ref{lem:norm-Uz}] 
Suppose that \eqref{rst1:lem-norm-unit} and \eqref{rst2:lem-norm-unit} hold for all $i\in [N]$ and $k,\ell \in [K]$, which happens with probability $1-4K^2N^{-1}$ according to Lemma \ref{lem:norm-unit} and the union bound.  We first show \eqref{rst1:eq-norm-Uz}. Since $\bz_i \in S_\ell^*$ and a uniform distribution over the sphere is rotationally invariant, we have $\|\bU_k^{*^T}\bz_i\|=\|\bU_k^{*^T}\bU_\ell^*\ba_i\| = \|\bU_{k\ell}^*\bm{\Sigma}^*_{k\ell}\bV_{k\ell}^{*^T}\ba_i\|\sim \| \bm{\Sigma}^*_{k\ell} \ba_i\|$. This, together with \eqref{rst2:lem-norm-unit} and \eqref{eq:aff=F}, implies that for any $j\in [N]$ and $\ell \in [K]$,
\begin{align}\label{eq1:lem-norm-Uz}
 \frac{\mathrm{aff}(S_k^*,S_\ell^*)-\alpha}{\sqrt{d_\ell}+\alpha} \le  \|\bU_k^{*^T}\bz_i\| \le \frac{\mathrm{aff}(S_k^*,S^*_\ell)+\alpha}{\sqrt{d_\ell}-\alpha}. 
\end{align}
We next show \eqref{rst2:eq-norm-Uz}. According to $\bz_i \in S_{\ell}^*$ and \eqref{eq:bar-a}, we have
\begin{align}\label{eq2:lem-norm-Uz}
\frac{\langle \bU_k^{*^T}\bz_i,  \bU_k^{*^T}\bz_j\rangle}{\|\bU_k^{*^T}\bz_j\|}  = \frac{\langle \bU_k^{*^T}\bU_\ell^*\ba_i,  \bU_k^{*^T}\bz_j\rangle}{\|\bU_k^{*^T}\bz_j\|}   =  \frac{\langle \bU_k^{*^T}\bU_{\ell}^*\bba_i,  \bU_k^{*^T}\bz_j\rangle}{\|\bba_i\|\|\bU_k^{*^T}\bz_j\|}.
\end{align}
By letting $X$ be a standard normal random variable, i.e., $X\sim N(0,1)$, we compute
\begin{align*}
\P\left( \left|\frac{\langle \bU_k^{*^T}\bU_\ell^*\bba_i,  \bU_k^{*^T}\bz_j\rangle}{ \|\bU_k^{*^T}\bz_j\|}\right| \le 2\sqrt{\log N} \Bigm\vert  \bz_j\right) &= \P\left( \left|X\right| \le \frac{2\|\bU_k^{*^T}\bz_j\|\sqrt{\log N} }{\|\bU_{\ell}^{*^T}\bU_k^*\bU_k^{*^T}\bz_j\|} \right) \\
&  \ge \P\left( |X| \le 2\sqrt{\log N} \right) \ge 1- \sqrt{\frac{2}{\pi}}\frac{N^{-2}}{\sqrt{\log N}},
\end{align*}
where the equality is due to $ \langle \bU_k^{*^T}\bU_\ell^*\bba_i,  \bU_k^{*^T}\bz_j \rangle/{ \|\bU_k^{*^T}\bz_j\|} \sim \mN\left(\b0,\|\bU_{\ell}^{*^T}\bU_k^*\bU_k^{*^T}\bz_j\|^2/\| \bU_k^{*^T}\bz_j\|^2 \right)$  and the first inequality is due to $\|\bU_{\ell}^{*^T}\bU_k^*\bU_k^{*^T}\bz_j\| \le \| \bU_k^{*^T}\bz_j\|$. This, together with \eqref{rst1:lem-norm-unit}, \eqref{eq2:lem-norm-Uz}, and the union bound, implies that it holds with probability at least $1- K^2N^{-1}/{\sqrt{\log N}}$ that for all $1 \le i\neq j \le N$ and $\ell \in [K]$, 
\begin{align*}
\frac{|\langle \bU_k^{*^T}\bz_i,  \bU_k^{*^T}\bz_j\rangle|}{\|\bU_k^{*^T}\bz_j\|} \le \frac{2\sqrt{\log N}}{\sqrt{d_\ell}-\alpha}.
\end{align*}
This, together with \eqref{eq1:lem-norm-Uz} and the union bound, implies  the desired results.
\end{proof}

\section{Proofs in Section \ref{subsec:init}}\label{appenSec:pf-init}

According to Assumption \ref{AS:2}, $d_{\min} \gtrsim \log^3N$, and $\alpha=2\sqrt{\log N}+2$, there exists an $\varepsilon\lesssim 1/\sqrt{\log N}$ such that 
\begin{align}\label{eq:epsilon}
\alpha \le \varepsilon \mathrm{aff}(S_k^*,S_\ell^*)\ \text{for all}\ 1 \le k \neq \ell \le K,\quad \alpha \le \varepsilon  \sqrt{d_k}\ \text{for all}\ k\in [K].
\end{align}
This result shall be used in the subsequent proofs again and again. 
%According to \eqref{tau}, we can specifically take 
%\begin{align}\label{tau1}
%\tau = \frac{\sqrt{c}}{\sqrt{d_{\max}}}, 
%\end{align}
%where $c>0$ is a constant. 

\subsection{Proof of Lemma \ref{lem:p-q} } 

Before we prove Lemma \ref{lem:p-q}, we need the following lemma to estimate the probability of the event that $\left|\langle \bz_i,\bz_j \rangle \right| \ge  \tau$ conditioned on $\bz_j$. Recall that we denote the cumulative distribution function of the standard normal distribution by
\begin{align*}
\Phi(x) = \frac{1}{\sqrt{2\pi}} \int_{-\infty}^x e^{-\frac{t^2}{2}} dt. 
\end{align*} 

\begin{lemma}\label{lem:prob-cond}
Suppose that $\bz_i \in S_k^*$ for some $k \in [K]$. Then, it holds for any $1 \le i \neq j \le N$ that
\begin{align}\label{rst:lem-prob-cond}
2 - 2 \Phi\left( \frac{\tau(\sqrt{d_k}+\alpha)}{\|\bU_k^{*^T}\bz_j\|} \right) - 2N^{-2} \le \P\left( \left|\langle \bz_i,\bz_j \rangle \right| \ge  \tau   \Bigm\vert  \bz_j \right) 
\le 2 - 2 \Phi\left( \frac{\tau(\sqrt{d_k}-\alpha)}{\|\bU_k^{*^T}\bz_j\|} \right) + 2N^{-2},
\end{align}
where $\alpha=2\sqrt{\log N}+2$. In particular, we have
\begin{align}\label{rst1:lem-prob-cond}
\P\left( \left|\langle \bz_i,\bz_j \rangle\right| \ge \frac{\|\bU_k^{*^T}\bz_j\|\sqrt{ \log N}}{\sqrt{d_k}}   \Bigm\vert  \bz_j \right) \le \sqrt{\frac{2}{\pi}} \frac{2\sqrt{d_k}}{(\sqrt{d_k}-\alpha)\sqrt{\log N}} N^{-\frac{(\sqrt{d_k}-\alpha)^2}{2d_k}} + 2N^{-2}.
\end{align}
\end{lemma}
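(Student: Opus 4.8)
The plan is to prove Lemma~\ref{lem:prob-cond} by conditioning on $\bz_j$ and exploiting the rotational invariance of the uniform distribution over the sphere. Since $\bz_i \in S_k^*$, we can write $\bz_i = \bU_k^*\ba_i$ with $\ba_i \sim \mathrm{Unif}(\S^{d_k-1})$, so that $\langle \bz_i, \bz_j \rangle = \langle \bU_k^*\ba_i, \bz_j \rangle = \langle \ba_i, \bU_k^{*^T}\bz_j \rangle$. Thus the inner product depends on $\bz_j$ only through the fixed vector $\bw := \bU_k^{*^T}\bz_j \in \R^{d_k}$, and we must control $\P(|\langle \ba_i, \bw\rangle| \ge \tau \mid \bz_j)$. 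Using the representation \eqref{eq:bar-a}, namely $\ba_i = \bba_i / \|\bba_i\|$ with $\bba_i \sim \mN(\b0, \bI_{d_k})$, the quantity becomes $|\langle \bba_i, \bw\rangle| / \|\bba_i\| \ge \tau$, i.e.\ $|\langle \bba_i, \bw\rangle| \ge \tau \|\bba_i\|$.

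\textbf{Key steps in order.} First I would observe that $\langle \bba_i, \bw \rangle \sim \mN(0, \|\bw\|^2)$, so $\langle \bba_i, \bw\rangle / \|\bw\| =: X \sim \mN(0,1)$ is a standard normal. The target event is then $|X| \ge \tau \|\bba_i\| / \|\bw\|$. The second step is to decouple the numerator from the norm $\|\bba_i\|$: on the high-probability event from \eqref{rst1:lem-norm-unit} in Lemma~\ref{lem:norm-unit}, we have $|\|\bba_i\| - \sqrt{d_k}| \le \alpha$, which holds with probability at least $1 - 4N^{-2}$ (absorbed as the $2N^{-2}$ additive slack after restricting to the relevant single index, or via a union-bound argument). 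On this event $\sqrt{d_k} - \alpha \le \|\bba_i\| \le \sqrt{d_k} + \alpha$, so the threshold $\tau\|\bba_i\|/\|\bw\|$ lies between $\tau(\sqrt{d_k}-\alpha)/\|\bw\|$ and $\tau(\sqrt{d_k}+\alpha)/\|\bw\|$. The third step is to translate the two-sided tail of $X$ into the CDF $\Phi$: since $\P(|X| \ge s) = 2 - 2\Phi(s)$ for $s \ge 0$ and this is decreasing in $s$, plugging in the two bracketing values of $s$ and accounting for the failure probability $2N^{-2}$ of the norm-concentration event yields exactly the sandwich \eqref{rst:lem-prob-cond}.

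\textbf{Deriving the particular bound.} For the specialization \eqref{rst1:lem-prob-cond}, I would set $\tau = \|\bw\|\sqrt{\log N}/(\sqrt{d_k}\cdot\,\|\bw\|)$—more precisely choose the threshold so that $\tau(\sqrt{d_k}-\alpha)/\|\bw\| = (\sqrt{d_k}-\alpha)\sqrt{\log N}/d_k$—and apply the upper bound in \eqref{rst:lem-prob-cond}. This reduces to bounding $2 - 2\Phi(s)$ with $s = (\sqrt{d_k}-\alpha)\sqrt{\log N}/\sqrt{d_k}$ from above. Here I would invoke the standard Gaussian tail estimate $1 - \Phi(s) \le \frac{1}{\sqrt{2\pi}}\frac{1}{s}e^{-s^2/2}$, which gives a factor $\exp(-s^2/2) = N^{-(\sqrt{d_k}-\alpha)^2/(2d_k)}$ together with the prefactor $\sqrt{2/\pi}\cdot \sqrt{d_k}/((\sqrt{d_k}-\alpha)\sqrt{\log N})$, matching the stated form after adding back the $2N^{-2}$ slack.

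\textbf{Main obstacle.} The routine parts are the rotational-invariance rewriting and the monotone substitution into $\Phi$. The step requiring the most care is the decoupling in the second stage: the event $|X| \ge \tau\|\bba_i\|/\|\bw\|$ couples the standard normal $X = \langle \bba_i,\bw\rangle/\|\bw\|$ with the random norm $\|\bba_i\|$, and these are \emph{not} independent since both involve $\bba_i$. The clean way around this is to condition on the norm-concentration event from Lemma~\ref{lem:norm-unit} and bound the threshold deterministically within its range, rather than attempting a joint tail computation; the price is the additive $2N^{-2}$ terms, which is exactly why they appear in the statement. One must be careful that the concentration event and the Gaussian tail event are handled by separate union bounds so that the failure probabilities add correctly, and that the direction of monotonicity of $\Phi$ is tracked consistently when converting the larger/smaller threshold into the lower/upper bound on the probability.
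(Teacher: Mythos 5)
Your proposal is correct and follows essentially the same route as the paper's proof: rewrite $\langle \bz_i,\bz_j\rangle$ via rotational invariance as $\langle \bba_i, \bU_k^{*^T}\bz_j\rangle/\|\bba_i\|$, bracket the random threshold $\tau\|\bba_i\|$ between $\tau(\sqrt{d_k}\pm\alpha)$ on the norm-concentration event (incurring the $2N^{-2}$ slack), and convert the Gaussian tail into $\Phi$; your handling of the dependence between $\langle\bba_i,\bw\rangle$ and $\|\bba_i\|$ by restricting to the concentration event is the same mechanism as the paper's union bound. The only cosmetic difference is in \eqref{rst1:lem-prob-cond}, where you invoke the standard Mills-ratio bound $1-\Phi(s)\le \frac{1}{\sqrt{2\pi}\,s}e^{-s^2/2}$ while the paper bounds $e^{-t^2/2}\le e^{-ts/2}$ and integrates; your version is tighter by a factor of $2$ and still implies the stated inequality.
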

\begin{proof}[Proof of Lemma \ref{lem:prob-cond}] 
According to $\bz_i \in S_k^*$ and \eqref{eq:bar-a}, we have
\begin{align*}
\P\left( \left|\langle \bz_i,\bz_j \rangle\right| \ge \tau  \Bigm\vert \bz_j \right)  &= 2\P\left(  \langle \bz_i,\bz_j \rangle  \ge \tau  \Bigm\vert \bz_j \right) = 2\P\left(  \langle \bU_k^*\ba_i,\bz_j \rangle  \ge \tau  \Bigm\vert \bz_j \right)  =  2\P\left(  \langle \bba_i,\bU_k^{*^T}\bz_j \rangle  \ge \tau\|\bba_i\|  \Bigm\vert \bz_j \right) \\
& \ge 2\P\left( \langle \bba_i,\bU_k^{*^T}\bz_j \rangle \ge \tau (\sqrt{d_k}+\alpha)  \Bigm\vert \bz_j\right) - 2\P\left( \|\bba_i\| \ge \sqrt{d_k}+ \alpha \right) \\
& \ge 2 - 2 \Phi\left( \frac{\tau(\sqrt{d_k}+\alpha)}{\|\bU_k^{*^T}\bz_j\|} \right) - 2N^{-2},
\end{align*}
where the first inequality is due to the union bound and the fact that $\ba_i$ is independent of $\ba_j$ and the second inequality follows from $\langle \bba_i,\bU_k^{*^T}\bz_j \rangle \sim \mN(0,\|\bU_k^{*^T}\bz_j\|^2)$ and uses \eqref{eq1:lem:norm-unit} for $\bba_i \in \mathrm{Unif}\left(\S^{d_k-1}\right)$. By the same argument, we obtain
\begin{align*}
\P\left( \left|\langle \bz_i,\bz_j \rangle\right| \ge \tau  \Bigm\vert \bz_j \right)  & =  2\P\left(  \langle \bba_i,\bU_k^{*^T}\bz_j \rangle  \ge \tau\|\bba_i\|  \Bigm\vert \bz_j \right) \\
& \le 2\P\left( \langle \bba_i,\bU_k^{*^T}\bz_j \rangle \ge \tau (\sqrt{d_k} - \alpha)  \Bigm\vert \bz_j\right) + 2\P\left( \|\bba_i\| \le \sqrt{d_k} - \alpha \right) \\
& \le 2 - 2 \Phi\left( \frac{\tau(\sqrt{d_k}-\alpha)}{\|\bU_k^{*^T}\bz_j\|} \right) + 2N^{-2}.
\end{align*}
This, together with $\tau=\|\bU_k^{*^T}\bz_j\| \sqrt{\log N}/\sqrt{d_k}$, yields
\begin{align*}
\P\left( \left|\langle \bz_i,\bz_j \rangle\right| \ge  \frac{\|\bU_k^{*^T}\bz_j\|\sqrt{ \log N}}{\sqrt{d_k}}   \Bigm\vert  \bz_j \right) & \le 2 - 2 \Phi\left( \frac{\sqrt{ \log N}(\sqrt{d_k}-\alpha) }{\sqrt{d_k}} \right) + 2N^{-2} \\
& \le \sqrt{\frac{2}{\pi}}\int_{\frac{\sqrt{ \log N}(\sqrt{d_k}-\alpha) }{\sqrt{d_k}}}^\infty \exp\left( -\frac{t}{2}\frac{\sqrt{ \log N}(\sqrt{d_k}-\alpha) }{\sqrt{d_k}} \right)dt + 2N^{-2} \\
& = \sqrt{\frac{2}{\pi}} \frac{2\sqrt{d_k}}{(\sqrt{d_k}-\alpha)\sqrt{\log N}} N^{-\frac{(\sqrt{d_k}-\alpha)^2}{2d_k}} + 2N^{-2}.
\end{align*}
\end{proof}

\begin{proof}[Proof of Lemma \ref{lem:p-q}] 
  
First, we prove \eqref{pkl-bkl}.  Suppose that a pair of data points $\bz_i, \bz_j \in  S_k^*$ for some $k\in[K]$. According to \eqref{rst:lem-prob-cond} in Lemma \ref{lem:prob-cond} and $\|\bU_k^{*^T}\bz_j\|=\| \ba_j\|=1$ due to the UoS model, we obtain
\begin{align*}
2 - 2 \Phi\left(  \tau(\sqrt{d_k}+\alpha)  \right) - 2N^{-2} \le \P\left( \left|\langle \bz_i,\bz_j \rangle\right| \ge \tau   \Bigm\vert  \bz_j \right) \le 2 - 2 \Phi\left(  \tau(\sqrt{d_k}-\alpha) \right) + 2N^{-2}.
\end{align*}
This, together with $p_{kk} = \E\left[ \P\left( |\langle \bz_i,\bz_j \rangle| \ge \tau \Bigm\vert \bz_j \right) \right]$, implies 
\begin{align}\label{p:value}
 2 - 2 \Phi\left(  \tau(\sqrt{d_k}+\alpha)  \right) - 2N^{-2} \le p_{kk} \le  2 - 2 \Phi\left(  \tau(\sqrt{d_k}-\alpha) \right) + 2N^{-2}.
\end{align} 
This, together with \eqref{bkl}, yields that for all $k \in [K]$,
\begin{align}\label{eq1:lem:p-q}
p_{kk}-b_{kk} & \le 2\Phi\left( \tau\sqrt{d_k}\right) - 2\Phi\left(\tau(\sqrt{d_k}-\alpha) \right) + 2N^{-2}  \le \sqrt{\frac{2}{\pi}}\exp\left( -\frac{\tau^2(\sqrt{d_k}-\alpha)^2}{2} \right)\tau\alpha + 2N^{-2}  \lesssim \frac{1}{\log N},
\end{align}
where the last inequality is due to \eqref{tau} and $d_{\min} \gtrsim \log^3N$. By the same argument, we have $p_{kk}-b_{kk} \gtrsim - \frac{1}{\log N}.$ This, together with \eqref{eq1:lem:p-q}, implies \eqref{pkl-bkl} for all $k=\ell$. Suppose that a pair of data points $\bz_i \in S_k^*, \bz_j \in S_\ell^*$ for some $1 \le k \neq \ell \le K$. Since a uniform distribution over the sphere is rotationally invariant, we have
\begin{align*}
\|\bU_k^{*^T}\bz_j\| = \|\bU_k^{*^T}\bU_\ell^*\ba_j\| = \|\bU_{k\ell}^*\bm{\Sigma}_{k\ell}^*\bV_{k\ell}^{*^T}\ba_j\| \sim  \|\bm{\Sigma}_{k\ell}^*\ba_j\|,
\end{align*}
where the second equality is due to \eqref{svd}. This, together with \eqref{rst2:lem-norm-unit} in Lemma \ref{lem:norm-unit} and \eqref{rst:lem-prob-cond} in Lemma \ref{lem:prob-cond}, implies it holds with probability at least $1-2N^{-2}$ that
\begin{align*}
 2 - 2 \Phi\left( \frac{\tau(\sqrt{d_k}+\alpha)(\sqrt{d_\ell}+\alpha)}{\|\bm{\Sigma}^*_{k\ell}\|_F-\alpha} \right) - 2N^{-2} \le \P\left( \left|\langle \bz_i,\bz_j \rangle\right| \ge \tau   \Bigm\vert  \bz_j \right) \le 2 - 2 \Phi\left( \frac{\tau(\sqrt{d_k}-\alpha)(\sqrt{d_\ell}-\alpha)}{\|\bm{\Sigma}^*_{k\ell}\|_F+\alpha} \right) + 2N^{-2}.
\end{align*}
This, together with $\mathrm{aff}(S_k^*,S_\ell^*)=\|\bm{\Sigma}^*_{k\ell}\|_F$ and $p_{k\ell} = \E\left[\P\left( |\langle \bz_i,\bz_j \rangle| \ge \tau \Bigm\vert \bz_j \right)\right]$, further implies
\begin{align}\label{q:value}
2 - 2 \Phi\left( \frac{\tau(\sqrt{d_k}+\alpha)(\sqrt{d_\ell}+\alpha)}{\mathrm{aff}(S_k^*,S_\ell^*)-\alpha} \right) - 4N^{-2} \le  p_{k\ell}  \le 2 -  2 \Phi\left( \frac{\tau(\sqrt{d_k}-\alpha)(\sqrt{d_\ell}-\alpha)}{\mathrm{aff}(S_k^*,S_\ell^*)+\alpha} \right) + 4N^{-2}.
\end{align}  
This, together with \eqref{bkl}, yields that for all $1 \le k \neq \ell \le K$,
\begin{align}\label{eq2:lem:p-q}
p_{k\ell} - b_{k\ell} &\le  2 \Phi\left( \frac{\tau\sqrt{d_kd_\ell}}{\mathrm{aff}(S_k^*,S_\ell^*)} \right)  - 2 \Phi\left( \frac{\tau(\sqrt{d_k}-\alpha)(\sqrt{d_\ell}-\alpha)}{\mathrm{aff}(S_k^*,S_\ell^*)+\alpha} \right) + 4N^{-2} \notag\\
& \le \sqrt{\frac{2}{\pi}}\exp\left( -\frac{\tau^2(\sqrt{d_k}-\alpha)^2(\sqrt{d_\ell}-\alpha)^2}{2(\mathrm{aff}(S_k^*,S_\ell^*)+\alpha)^2} \right)\left( \frac{\tau\sqrt{d_kd_\ell}}{\mathrm{aff}(S_k^*,S_\ell^*)}  - \frac{\tau(\sqrt{d_k}-\alpha)(\sqrt{d_\ell}-\alpha)}{\mathrm{aff}(S_k^*,S_\ell^*)+\alpha} \right)\notag\\
&  \le \sqrt{\frac{2}{\pi}}\exp\left( -\frac{(1-\varepsilon)^4\tau^2d_{\min}^2}{2(1+\varepsilon)^2\mathrm{aff}^2(S_k^*,S_\ell^*)} \right)  \left( 1- \frac{(1-\varepsilon)^2}{1+\varepsilon}\right) \frac{\tau\sqrt{d_kd_\ell}}{\mathrm{aff}(S_k^*,S_\ell^*)} \notag\\
& =  \sqrt{\frac{2}{\pi}} \frac{\varepsilon(3-\varepsilon)\tau\sqrt{d_kd_\ell}}{1+\varepsilon}\exp\left( -\frac{(1-\varepsilon)^4\tau^2d_{\min}^2}{2 (1+\varepsilon)^2  \mathrm{aff}^2(S_k^*,S_\ell^*)} \right) \frac{1}{\mathrm{aff}(S_k^*,S_\ell^*)} \notag\\
& = \sqrt{\frac{2}{\pi}}\exp\left(-\frac{1}{2} \right) \frac{\varepsilon(3-\varepsilon) \sqrt{d_kd_\ell}}{(1-\varepsilon)^2d_{\min}} \lesssim \frac{d_{\max} }{d_{\min}\sqrt{\log N}},
\end{align}
where the third inequality is due to \eqref{eq:epsilon} and the last inequality follows from $\varepsilon \lesssim 1/\sqrt{\log N}$ and the fact that  $ \exp\left( -\eta x^2/2 \right)x$ attains the maximum  at $x=1/\sqrt{\eta}$ when $x \in (0, \infty )$. By the same argument, we have $p_{k\ell}-b_{k\ell} \gtrsim - \frac{d_{\max}^{3/2}}{d_{\min}^{3/2}\sqrt{\log N}}.$ This, together with \eqref{eq1:lem:p-q}, implies \eqref{pkl-bkl} for all $1 \le k \neq \ell \le K$. % Then, we complete the proof. 

Next, we prove \eqref{pq:gap}. Note that for any  $1 \le k \neq \ell \le K$,
\begin{align}\label{eq0:lem-p-q-gap}
(\sqrt{d_k}-\alpha)(\sqrt{d_\ell}-\alpha) - (\sqrt{d_k}+\alpha)\left( \mathrm{aff}(S_k^*,S^*_\ell)+\alpha \right) 
% & \ge (1-\varepsilon)^2\sqrt{d_k}\sqrt{d_\ell} - (1+\varepsilon)\sqrt{d_k}\sqrt{d_\ell}\notag \\
& =  \sqrt{d_k}\left(\sqrt{d_\ell}- \mathrm{aff}(S_k^*,S^*_\ell)\right)-\alpha\left(2\sqrt{d_k}+\sqrt{d_\ell}+ \mathrm{aff}(S_k^*,S^*_\ell)\right) \notag\\
& \ge  \sqrt{d_k}\left(\sqrt{d_\ell}- \mathrm{aff}(S_k^*,S^*_\ell)\right)-2\alpha\left(\sqrt{d_k}+\sqrt{d_\ell}\right) \notag\\
% & \ge  \frac{1}{5}\sqrt{d_kd_\ell} - \frac{1}{10}\sqrt{d_kd_\ell} 
& \ge  \frac{1}{10}\sqrt{d_kd_\ell},
\end{align} 
where the first inequality follows from $\mathrm{aff}(S_k^*,S_\ell^*) \le  \sqrt{d_\ell}$ and the second inequality uses $\mathrm{aff}(S_k^*,S_\ell^*) \le 4\sqrt{d_\ell}/5$ due to \eqref{affi:re} and $d_{\min} \gtrsim \log^3N$. For ease of exposition, let 
\begin{align*}
x_{k\ell} = \frac{\tau(\sqrt{d_k}-\alpha)(\sqrt{d_\ell}-\alpha)}{\mathrm{aff}(S_k^*,S_\ell^*)+\alpha},\quad y_k = \tau(\sqrt{d_k}+\alpha).
\end{align*}
According to \eqref{eq0:lem-p-q-gap} and \eqref{tau}, we have $x_{k\ell} > y_k$ for any $1 \le k \neq \ell \le K$.  For all $1 \le k \neq \ell \le K$ satisfying $\mathrm{aff}(S_k^*,S_\ell^*) \ge \tau(\sqrt{d_k}-\alpha)(\sqrt{d_\ell}-\alpha)/(2c) - \alpha$, we have
\begin{align*}
x_{k\ell} \le 2c
\end{align*}
and
\begin{align*}
x_{k\ell} - y_k  \ge \frac{\tau\sqrt{d_kd_\ell}}{10\left(\mathrm{aff}(S_k^*,S_\ell^*)+\alpha\right)} \ge \frac{c\sqrt{d_kd_\ell}}{10\min\{\sqrt{d_k},\sqrt{d_\ell}\}\sqrt{d_{\max}}} \gtrsim \frac{\sqrt{d_{\min}}}{\sqrt{d_{\max}}}, 
\end{align*}
where the first inequality uses \eqref{eq0:lem-p-q-gap} and the second inequality follows from \eqref{eq:epsilon} and \eqref{affi:re}. This, together with \eqref{p:value}, \eqref{q:value}, and $x_{k\ell} > y_k$, yields that for all $1 \le k \neq \ell \le K$,
\begin{align}\label{eq1:lem-p-q-gap}
p_{kk} - p_{k\ell} & \ge 2 \left( \Phi\left(x_{k\ell}\right)  - \Phi\left(y_k \right) \right) - 6N^{-2}  \ge \sqrt{\frac{2}{\pi}} \exp\left( - \frac{x_{k\ell}^2}{2}\right)\left(  x_{k\ell} - y_k \right) - 6N^{-2} \gtrsim \frac{\sqrt{d_{\min}}}{\sqrt{d_{\max}}}.
\end{align} 
For all $1 \le k \neq \ell \le K$ satisfying $\mathrm{aff}(S_k^*,S_\ell^*) < \tau(\sqrt{d_k}-\alpha)(\sqrt{d_\ell}-\alpha)/(2c) - \alpha$, we have for all $1 \le k \neq \ell \le K$,
\begin{align*}
p_{kk} \ge 2 - 2\Phi\left(   \frac{c(\sqrt{d_k}+\alpha)}{\sqrt{d_{\max}}} \right) - 2N^{-2} \ge 2 - 2\Phi\left(  (1+\varepsilon) c\right) - 2N^{-2},
\end{align*}
where the first inequality is due to \eqref{p:value} and \eqref{tau} and the second inequality uses \eqref{eq:epsilon}, and 
\begin{align*}
p_{k\ell}  & \le 2 -  2 \Phi\left(  2c \right) + 4N^{-2}.
\end{align*}
Then, we have for all $1 \le k \neq \ell \le K$,
\begin{align*}
p_{kk} - p_{k\ell} \ge 2 \Phi\left(  2c \right) - 2\Phi\left(  (1+\varepsilon) c\right) - 6N^{-2},
\end{align*}
which is a constant due to the fact that $c>0$ is a constant. 
%For all $k \neq \ell$ satisfying $\mathrm{aff}(S_k^*,S_\ell^*) < \left( \max_{k\neq \ell}\mathrm{aff}(S_k^*,S_\ell^*) - \alpha \right)/2$, we have
%\begin{align*}
%p_k \ge 2 - 2\Phi\left(   \frac{\max_{k\neq \ell}\mathrm{aff}(S_k^*,S_\ell^*)+\alpha}{ (\sqrt{d_{\max}}-\alpha)^2}\left(\sqrt{d_k}+\alpha\right) \right) - 2N^{-2} \ge 2 - 2\Phi\left(   \frac{(\sqrt{d_k}-\alpha)(\sqrt{d_\ell}-\alpha)}{ (\sqrt{d_{\max}}-\alpha)^2} \right) - 2N^{-2},
%\end{align*}
%where the second inequality is due to \eqref{eq0:lem-p-q-gap}, and
%\begin{align*}
%q_{k\ell} & \le 2 -  2 \Phi\left( \frac{\max_{k\neq \ell}\mathrm{aff}(S_k^*,S_\ell^*)+\alpha}{ (\sqrt{d_{\max}}-\alpha)^2} \frac{(\sqrt{d_k}-\alpha)(\sqrt{d_\ell}-\alpha)}{\mathrm{aff}(S_k^*,S_\ell^*)+\alpha} \right) + 4N^{-2} \\
%& \le 2 -  2 \Phi\left(  \frac{2(\sqrt{d_k}-\alpha)(\sqrt{d_\ell}-\alpha)}{(\sqrt{d_{\max}}-\alpha)^2} \right) + 4N^{-2}.
%\end{align*}
%Then, we obtain
%\begin{align*}
%p_k - q_{k\ell} & \ge 2 \Phi\left(  \frac{2(\sqrt{d_k}-\alpha)(\sqrt{d_\ell}-\alpha)}{(\sqrt{d_{\max}}-\alpha)^2} \right) - 2\Phi\left(   \frac{(\sqrt{d_k}-\alpha)(\sqrt{d_\ell}-\alpha)}{ (\sqrt{d_{\max}}-\alpha)^2} \right) - 6N^{-2} \\
%& \ge \sqrt{\frac{2}{\pi}} \exp\left( -2\right)  \frac{(\sqrt{d_k}-\alpha)(\sqrt{d_\ell}-\alpha)}{ (\sqrt{d_{\max}}-\alpha)^2} - 6N^{-2}. 
%\end{align*}
This, together with \eqref{eq1:lem-p-q-gap} and $d_{\min} \gtrsim \log N$, implies \eqref{pq:gap}. 
% where the second inequality is due to a basic inequality for the integral. % \eqref{eq0:lem-p-q-gap}, and \eqref{tau}, the third inequality uses $\mathrm{aff}(S_k,S_\ell) \ge  \max_{k\neq \ell}\mathrm{aff}(S_k,S_\ell)/2-\alpha/2$ and \eqref{eq0:lem-p-q-gap}, and the last inequality follows from \eqref{tau}  and $N  \ge 12\sqrt{2\pi}\exp(2)$. % the last inequality uses $\sqrt{d} \ge 8\alpha \sqrt{\log N}$ and $N  \ge 12\sqrt{2\pi}\exp(2)$.
\end{proof}

\subsection{Proof of Lemma \ref{lem:sing-B}}

\begin{proof}% [Proof of Lemma \ref{lem:sing-B}]
According to \eqref{bkl} and \eqref{tau}, we have for all $1 \le k \neq \ell \le K$,
\begin{align}\label{bkk:bound}
b_{kk} = 2 - 2\Phi\left( \frac{\sqrt{cd_k}}{\sqrt{d_{\max}}} \right) \ge 2 - 2\Phi(\sqrt{c})
\end{align} 
and
\begin{align}\label{bkl:bound}
b_{k\ell} = 2 - 2\Phi\left( \frac{\sqrt{cd_kd_\ell}}{\sqrt{d_{\max}}\mathrm{aff}(S_k^*,S_\ell^*)} \right) \le 2 - 2\Phi\left( \frac{\sqrt{cd_{\min}}}{\kappa\sqrt{d_{\max}}} \right) = 2 - 2\Phi\left( \frac{\sqrt{c}}{\kappa\sqrt{\kappa_d}} \right),
\end{align} 
where the inequality is due to $\mathrm{aff}(S_k^*,S_\ell^*) \le \kappa\min\{\sqrt{d_k},\sqrt{d}_\ell\}$ for all $1\le k \neq \ell \le K$ by Assumption \ref{AS:2} and \eqref{affi:re}. Then, we can decompose the symmetric matrix $\bB$ into $\bB = \bB_1+\bB_2$, where
\begin{align*}
\bB_1 =  \begin{bmatrix}
\frac{b_{11}}{2} & b_{12} & \dots & b_{1K} \\
b_{12} & \frac{b_{22}}{2} & \dots & b_{2K} \\ 
\vdots & \vdots & \ddots & \vdots \\
b_{1K} & b_{2K} & \dots & \frac{b_{KK}}{2} \\ 
\end{bmatrix},\quad  \bB_2 =  \frac{1}{2}\begin{bmatrix}
b_{11} & 0 & \dots & 0 \\
0 & b_{22} & \dots & 0 \\ 
\vdots & \vdots & \ddots & \vdots \\
0 & 0 & \dots & b_{KK} \\ 
\end{bmatrix}.
\end{align*}
According to \eqref{bkk:bound}, \eqref{bkl:bound}, and \eqref{kappa:AS2}, we can verify for all $k \in [K]$,
\begin{align*}
\frac{1}{2}|b_{kk}| - \left| \sum_{\ell:k\neq \ell} b_{k\ell} \right| \ge  1 - \Phi(\sqrt{c}) - 2(K-1) \left(1 -  \Phi\left( \frac{\sqrt{c}}{\kappa\sqrt{\kappa_d}} \right)\right) \ge 0,
\end{align*}
which implies that $\bB_1$ is a symmetric diagonally dominant matrix (see \citet[Section 4.1.1]{golub2013matrix}). Using the result that a symmetric diagonally dominant matrix  with real non-negative diagonal entries is positive semidefinite, we can conclude that $\bB_1$ is positive semidefinite. On the other hand, we can see that $\bB_2$ is a diagonal matrix with all the diagonal elements being larger than $1-\Phi(\sqrt{c})$. Then, we have
\begin{align*}
\min_{\|\bx\|=1}\bx^T\bB\bx \ge \min_{\|\bx\|=1}\bx^T\bB_1\bx + \min_{\|\bx\|=1}\bx^T\bB_2\bx = \lambda_{\min}(\bB_1) + \lambda_{\min}(\bB_2) \ge 1 - \Phi(\sqrt{c}). 
\end{align*}
Then, we complete the proof. 
\end{proof}

\subsection{Proof of Proposition \ref{prop:spec-gap}} 

Before we prove Proposition \ref{prop:spec-gap}, we need estimate the covariance between the random variables $a_{ik}$ and $a_{jk}$ generated by the thresholding procedure \eqref{step:thresholding} for all $1 \le i\neq j \le N$ and $k\in [N]$. 

%For ease of exposition, we consider \eqref{affi:re} in Assumption \ref{AS:1} holds with equality, i.e.,
%\begin{align*}
%\max_{1 \le k \neq \ell \le K} \overline{\mathrm{aff}}(S_k^*,S_\ell^*) = \kappa.
%\end{align*}
%This, together with \eqref{tau1} and $d_{\min} \gtrsim \log N$, implies
%\begin{align*}
%\tau = \frac{\kappa_2}{\sqrt{d_{\max}}},
%\end{align*}
%where $\kappa_2 > 0$ is a constant.  

\begin{lemma}\label{lem:ex-delta-ijk}
Suppose that $\bz_i$, $\bz_j$, and $\bz_k$ are different points generated according to the semi-random UoS model such that $\bz_k \in S_{\ell}$ for some $\ell \in [K]$. Suppose in addition that Assumption \ref{AS:2} holds, the thresholding parameter is set as in \eqref{tau}, $d_{\min} \gtrsim \log N$. 
Then, it holds  for any $1 \le i \neq j \le N$  with probability at least $1-5K^2N^{-2}$ that
\begin{align*}
\left| \E[a_{ik}a_{jk} \vert \bz_i,\bz_j] - \E[a_{ik}\vert \bz_i]\E[a_{jk} \vert \bz_j] \right| \lesssim  \frac{d_{\max}}{d_{\min}\sqrt{\log N}}.
\end{align*}
\end{lemma}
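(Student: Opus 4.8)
The plan is to view the covariance as an average over the single shared point $\bz_k$ with $\bz_i,\bz_j$ held fixed, and to show it is small because the two thresholding events depend on nearly orthogonal directions. Writing $\bz_k=\bU_\ell^*\ba_k$ with $\ba_k\sim\mathrm{Unif}(\S^{d_\ell-1})$ and setting $\bu=\bU_\ell^{*^T}\bz_i$ and $\bv=\bU_\ell^{*^T}\bz_j$, one has $\langle\bz_i,\bz_k\rangle=\langle\bu,\ba_k\rangle$ and $\langle\bz_j,\bz_k\rangle=\langle\bv,\ba_k\rangle$, so $a_{ik}=\mathbb{1}\{|\langle\bu,\ba_k\rangle|\ge\tau\}$ and $a_{jk}=\mathbb{1}\{|\langle\bv,\ba_k\rangle|\ge\tau\}$. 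Hence the quantity to bound is the deterministic function of $(\bu,\bv)$ given by $\mathrm{Cov}_{\ba_k}(a_{ik},a_{jk})$, and the probability $1-5K^2N^{-2}$ in the statement refers only to the randomness of $\bz_i,\bz_j$. First I would record the properties of $\bu,\bv$ that hold for the single fixed pair $(i,j)$ with probability $1-O(K^2N^{-2})$: applied with $k=\ell$ and to this pair alone, \eqref{rst1:eq-norm-Uz} and \eqref{rst2:eq-norm-Uz} of Lemma \ref{lem:norm-Uz} bound $\|\bu\|,\|\bv\|$ and, crucially, give $|\langle\bu,\bv\rangle|/\|\bv\|\lesssim\sqrt{\log N}/\sqrt{d_{\min}}$, whence the correlation $\rho=\langle\bu,\bv\rangle/(\|\bu\|\|\bv\|)$ is $o(1)$.

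Next I would pass to the Gaussian surrogate $\ba_k=\bba_k/\|\bba_k\|$ with $\bba_k\sim\mN(\b0,\bI_{d_\ell})$ via \eqref{eq:bar-a}; since the event rescales, $a_{ik}=\mathbb{1}\{|\langle\bu,\bba_k\rangle|\ge\tau\|\bba_k\|\}$. On the event $\|\bba_k\|\in[\sqrt{d_\ell}-\alpha,\sqrt{d_\ell}+\alpha]$, whose complement contributes only $O(N^{-2})$ by \eqref{eq1:lem:norm-unit}, I replace the random threshold by the deterministic $\tau\sqrt{d_\ell}$, writing $a_{ik}=a_{ik}'+e_i$ with $a_{ik}'=\mathbb{1}\{|\langle\bu,\bba_k\rangle|\ge\tau\sqrt{d_\ell}\}$. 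The error $e_i$ is supported on an $O(\tau\alpha)$-window about $\pm\tau\sqrt{d_\ell}$ together with the bad-norm event, and since $\langle\bu,\bba_k\rangle\sim\mN(0,\|\bu\|^2)$ a one-dimensional Gaussian anti-concentration estimate bounds $\E|e_i|$; this is exactly the $\alpha$-window mechanism already used to prove $|p_{k\ell}-b_{k\ell}|\lesssim\kappa_d/\sqrt{\log N}$ in Lemma \ref{lem:p-q} (cf. \eqref{eq2:lem:p-q} and \eqref{eq:epsilon}), and it yields $\E|e_i|,\E|e_j|\lesssim\kappa_d/\sqrt{\log N}$. Using $|\mathrm{Cov}(a_{ik},a_{jk})-\mathrm{Cov}(a_{ik}',a_{jk}')|\lesssim\E|e_i|+\E|e_j|$, the whole problem reduces to bounding the covariance of the two deterministic-threshold indicators $a_{ik}',a_{jk}'$ of a jointly Gaussian pair with correlation $\rho$.

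For this last piece I would exploit that each event $\{|\langle\bu,\bba_k\rangle|\ge\tau\sqrt{d_\ell}\}$ is symmetric, so the covariance is an even function of $\rho$ and therefore vanishes to first order in $\rho$. Applying Plackett's identity $\partial_\rho\Phi_2(a,b;\rho)=\phi_2(a,b;\rho)$ and summing the four orthant contributions gives a derivative proportional to $\sinh(\rho s_u s_v/(1-\rho^2))$, which integrates to a bound of the form $|\mathrm{Cov}(a_{ik}',a_{jk}')|\lesssim\rho^2\,s_u s_v\,e^{-(s_u^2+s_v^2)/2}$ with standardized thresholds $s_u=\tau\sqrt{d_\ell}/\|\bu\|$ and $s_v=\tau\sqrt{d_\ell}/\|\bv\|$. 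Since $se^{-s^2/2}$ is uniformly bounded and $\rho=o(1)$, this term is $O(\rho^2)$ and hence dominated by the window error. Collecting the two contributions and using $d_{\min}\gtrsim\log^3N$ and $\kappa_d\ge1$ then produces the claimed bound $O(d_{\max}/(d_{\min}\sqrt{\log N}))$, with the failure probability $5K^2N^{-2}$ arising from the per-pair use of Lemma \ref{lem:norm-Uz} and the norm-concentration event.

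The main obstacle is the genuine dependence of $a_{ik}$ and $a_{jk}$ through the common point $\bz_k$: both indicators are tail events (threshold $\tau$), where naive Boole-type bounds are hopelessly lossy, and they are correlated. The decisive observations are that this coupling is weak because the two projected directions $\bu,\bv$ are nearly orthogonal---quantified by \eqref{rst2:eq-norm-Uz}---and that the two-sided absolute-value structure of the thresholding annihilates the leading $O(\rho)$ term, leaving only an $O(\rho^2)$ contribution. A secondary technical nuisance is that moving from the sphere to its Gaussian surrogate turns the threshold random; I would neutralize this by the window argument above, showing the resulting error is of the same order as the approximation error already controlled in Lemma \ref{lem:p-q}.
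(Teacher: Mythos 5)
Your proof is correct, but it takes a genuinely different route from the paper's. The paper never reduces to a bivariate normal: it disintegrates $\E[a_{ik}a_{jk}\mid\bz_i,\bz_j]$ over the value $t=|\langle \bz_j,\bz_k\rangle|$, decomposes $\ba_k$ into its component along $\tilde{\bv}_j$ plus an orthogonal part that is uniform on a $(d_\ell-2)$-dimensional sphere (Lemma \ref{lem:unif-dis}), sandwiches the conditional probability $\P\left( |\langle \bz_i,\bz_k\rangle|\ge\tau \mid |\langle \bz_j,\bz_k\rangle|=t,\bz_i,\bz_j\right)$ between $2-2\Phi(\beta_{ij})-2N^{-2}$ and $2-2\Phi(\beta_{ij}^\prime)+2N^{-2}$ on the moderate range $t\le\|\bv_j\|\sqrt{\log N}/\sqrt{d_\ell}$, discards the tail range using Lemma \ref{lem:prob-cond}, and finally compares the perturbed thresholds $\beta_{ij},\beta_{ij}^\prime$ with the unconditional ones via the separate Lemma \ref{lem:phi-gap}. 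You instead pass to the Gaussian surrogate, trade the random threshold $\tau\|\bba_k\|$ for the deterministic $\tau\sqrt{d_\ell}$ at the price of a window error (which, exactly as in Lemma \ref{lem:p-q}, is $\lesssim \alpha/\sqrt{d_{\min}}\lesssim 1/\log N$ under \eqref{subdim:AS2}), and then exploit that $\left(\langle\bu,\bba_k\rangle,\langle\bv,\bba_k\rangle\right)$ with $\bu=\bU_\ell^{*^T}\bz_i$, $\bv=\bU_\ell^{*^T}\bz_j$ is exactly bivariate normal with correlation $\rho$, so that Plackett's identity plus the symmetry of the two-sided events gives a covariance bound of order $\rho^2$. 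Your route is more modular: the entire dependence collapses into the single scalar $\rho$, and neither Lemma \ref{lem:unif-dis} nor Lemma \ref{lem:phi-gap} is needed; the paper's argument stays on the sphere and yields two-sided control of the joint probability itself, not only of the covariance. Two points you should make explicit in a full write-up. First, converting the near-orthogonality \eqref{rst2:eq-norm-Uz} into $\rho=o(1)$ requires the lower bound on $\|\bu\|$: by \eqref{rst1:eq-norm-Uz} and \eqref{affi:AS2}, $\|\bu\|\gtrsim \log N/\sqrt{d_{\max}}$ (or $\|\bu\|=1$ when $\bz_i\in S_\ell^*$), which gives $|\rho|\lesssim\sqrt{\kappa_d/\log N}$ and hence $\rho^2\lesssim\kappa_d/\log N$, well within the target. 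Second, when integrating Plackett's identity you must absorb the growth of $\sinh\left(r s_u s_v/(1-r^2)\right)$ into the Gaussian prefactor when $s_us_v$ is large (possible because $|\rho|$ is small, at the cost of a slightly smaller constant in the exponent); note also that the quadratic gain from symmetry, though correct and elegant, is not actually needed: since $\kappa_d\ge 1$, even the first-order bound $|\rho|\lesssim\sqrt{\kappa_d}/\sqrt{\log N}\le\kappa_d/\sqrt{\log N}$ already meets the claimed rate, so your argument carries extra slack.
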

\begin{proof}[Proof of Lemma \ref{lem:ex-delta-ijk}] 
Suppose that \eqref{rst1:eq-norm-Uz} and \eqref{rst2:eq-norm-Uz} hold, which happens with probability at least $1-5K^2/N$ according to Lemma \ref{lem:norm-Uz}. 
To simplify the notations, let
\begin{align}\label{eq:v}
\bv_i := \bU_\ell^{*^T}\bz_i,\ \tilde{\bv}_i:=\frac{\bv_i}{\|\bv_i\|},\ \text{for all}\ i\in [N].
\end{align}
Besides, for any given $\bv_i$ and $\bv_j$, let 
\begin{align}\label{eq:beta} 
\beta_{ij} := \frac{\left(\tau  +  |\langle \bv_i, \tilde{\bv}_j \rangle| \sqrt{\log N}/\sqrt{d_\ell}\right)(\sqrt{d_\ell}+\alpha)}{ \left(\|\bv_i\| - |\langle \bv_i,\tilde{\bv}_j \rangle| \right) \sqrt{1- (\log N)/d_\ell}},\ \beta_{ij}^\prime := \frac{\left(\tau  - |\langle \bv_i, \tilde{\bv}_j \rangle|\sqrt{\log N}/\sqrt{d_\ell}\right)(\sqrt{d_\ell}-\alpha)}{ \|\bv_i\| }.
\end{align}
In addition, suppose that the following inequalities hold:
\begin{align}
 \E[a_{ik}a_{jk} \vert \bz_i,\bz_j] & \ge \left( 2 - 2\Phi\left(  \beta_{ij} \right) - 2N^{-2} \right)\P\left( \tau \le  |\langle \bz_j,\bz_k \rangle| \le  \frac{\|\bv_j\|\sqrt{\log N}}{\sqrt{d_\ell}} \Bigm\vert \bz_j \right)  \label{eq:low-bound-bijk} \\
\E[a_{ik}a_{jk} \vert \bz_i,\bz_j]  & \le \left( 2 - 2\Phi\left(  \beta_{ij}^\prime \right) + 2N^{-2}\right)   \P\left(  \tau \le |\langle \bz_j,\bz_k \rangle| \le \frac{ \|\bv_j\|\sqrt{\log N}}{\sqrt{d_\ell}} \Bigm\vert \bz_j  \right) + \notag\\
&\quad \left(2+2N^{-2}\right) \P\left( \frac{\|\bv_j\| \sqrt{\log N}}{\sqrt{d_\ell}} \le   |\langle \bz_j,\bz_k \rangle| \le 1 \Bigm\vert \bz_j \right).\label{eq:up-bound-bijk}
\end{align}
According to Lemma \ref{lem:prob-cond}, we obtain
\begin{align}\label{eq:bound-bijk}
2 - 2 \Phi\left( \frac{\tau(\sqrt{d_\ell}+\alpha)}{\|\bv_i\|} \right) - 2N^{-2} \le \E[a_{ik}\vert \bz_i]  \le 2 - 2 \Phi\left( \frac{\tau(\sqrt{d_\ell}-\alpha)}{\|\bv_i\|} \right) + 2N^{-2}.
\end{align}
This, together with  \eqref{eq:low-bound-bijk}, yields
\begin{align}\label{eq:low-bound}
&\quad \E[a_{ik}a_{jk} \vert \bz_i,\bz_j] - \E[a_{ik}\vert \bz_i]\E[a_{jk} \vert \bz_j] \notag\\ & \ge  2\left( 1 - \Phi\left(  \beta_{ij} \right) - N^{-2}\right) \P\left( \tau \le  |\langle \bz_j,\bz_k \rangle| \le  \frac{\|\bv_j\|\sqrt{\log N}}{\sqrt{d_\ell}} \Bigm\vert \bz_j \right) - 2\left( 1 - \Phi\left( \frac{\tau(\sqrt{d_\ell}-\alpha)}{\|\bv_i\|} \right) + N^{-2}\right) \notag \\
&\ \P\left( |\langle \bz_j,\bz_k \rangle| \ge \tau  \Bigm\vert \bz_j \right)  = 2\left( \Phi\left( \frac{\tau(\sqrt{d_\ell}-\alpha)}{\|\bv_i\|} \right) - \Phi\left(  \beta_{ij} \right) - 2N^{-2}\right)\P\left( \tau \le  |\langle \bz_j,\bz_k \rangle| \le  \frac{\|\bv_j\|\sqrt{\log N}}{\sqrt{d_\ell}} \Bigm\vert \bz_j \right)\notag\\
& -  2\left( 1 - \Phi\left( \frac{\tau(\sqrt{d_\ell}-\alpha)}{\|\bv_i\|} \right) + N^{-2}\right) \P\left( |\langle \bz_j,\bz_k \rangle| \ge  \frac{\|\bv_j\|\sqrt{\log N}}{\sqrt{d_\ell}} \Bigm\vert \bz_j \right) \gtrsim  -\frac{d_{\max}}{d_{\min}\sqrt{\log N}}, % -\left(\frac{\log N}{\sqrt{d_\ell}} + \frac{N^{-\frac{(\sqrt{d_\ell}-\alpha)^2}{2d_\ell}}\sqrt{d_\ell } }{\sqrt{\log N}\left(\sqrt{d_\ell}-\alpha\right)} \right),
\end{align}
where the last inequality is due to Lemma \ref{lem:phi-gap},  Lemma \ref{lem:prob-cond}, and \eqref{tau}. By the similar argument, according to \eqref{eq:up-bound-bijk} and \eqref{eq:bound-bijk}, we have
\begin{align*}
\E[a_{ik}a_{jk} \vert \bz_i,\bz_j] - \E[a_{ik}\vert \bz_i]\E[a_{jk} \vert \bz_j]  & \le 2\left( \Phi\left( \frac{\tau(\sqrt{d_\ell}+\alpha)}{\|\bv_i\|} \right) - \Phi(\beta_{ij}^\prime) + 2N^{-2}\right)\P\left( \tau \le  |\langle \bz_j,\bz_k \rangle| \le  \frac{\sqrt{\log N}}{\sqrt{d_\ell}} \Bigm\vert \bz_j \right) \\ +\ 
& 2\left(  \Phi\left( \frac{\tau(\sqrt{d_\ell}+\alpha)}{\|\bv_i\|} \right) +2 N^{-2}\right)\P\left( |\langle \bz_j,\bz_k \rangle| \ge  \frac{\sqrt{\log N}}{\sqrt{d_\ell}} \Bigm\vert \bz_j \right) \lesssim  \frac{d_{\max}}{d_{\min}\sqrt{\log N}}.  
\end{align*}
According to this and \eqref{eq:low-bound}, we complete the proof.

Then, the rest of the proof is devoted to proving \eqref{eq:low-bound-bijk} and \eqref{eq:up-bound-bijk}.
According to \eqref{step:thresholding} and $\bz_k=\bU_\ell^{*}\ba_k$, we have
\begin{align}\label{eq:e-bijk} 
 \E[a_{ik}a_{jk} \vert \bz_i,\bz_j] & = \P\left( |\langle \bz_i,\bz_k \rangle| \ge \tau, |\langle \bz_j,\bz_k \rangle| \ge \tau \Bigm\vert  \bz_i,\bz_j\right) \notag\\ % = 2 \P\left( |\langle \bz_i,\bz_k \rangle| \ge \tau, \langle \bz_j,\bz_k \rangle \ge \tau \Bigm\vert  \bz_i,\bz_j \right) \notag\\
& =  \int_{-\infty}^\infty \P\left( |\langle \bz_i,\bz_k \rangle| \ge \tau, |\langle \bz_j,\bz_k \rangle| \ge \tau  \Bigm\vert  |\langle \bz_j,\bz_k \rangle| = t, \bz_i,\bz_j \right) d\P\left(  |\langle \bz_j,\bz_k \rangle| \le t \Bigm\vert \bz_j\right) \notag\\
& = \int_{\tau}^1 \P\left( |\langle \bz_i,\bz_k \rangle| \ge \tau \Bigm\vert |\langle \bz_j,\bz_k \rangle| = t,\bz_i,\bz_j \right) d\P\left(  |\langle \bz_j,\bz_k \rangle| \le t \Bigm\vert \bz_j \right)  \notag \\
& = \int_{\tau}^1  \P\left( |\langle \bv_i,\ba_k \rangle| \ge \tau \Bigm\vert  |\langle \bv_j,\ba_k \rangle| = t,\bz_i,\bz_j \right)  d\P\left(  |\langle \bz_j,\bz_k \rangle| \le t \Bigm\vert \bz_j \right),
\end{align} 
where the second equality is due to the the law  of total probability, the third equality uses $\tau \le |\langle \bz_j,\bz_k \rangle| \le 1$, and the last equality follows from $\bv_i=\bU_\ell^{*^T}\bz_i$ for all $i\in [N]$. Due to $\|\ba_k\|=1$ and $\| \tilde{\bv}_j\|=1$, we can decompose $\ba_k$ into two parts that are orthogonal:
\begin{align}\label{eq2:decomp-ak}
\ba_k = x \tilde{\bv}_j + \sqrt{1-x^2} \bb_k,
\end{align}
where $x \in \R$ and $\bb_k \in \R^{d_\ell}$ satisfying $\langle \bb_k,\tbv_j \rangle = 0$ and $\|\bb_k\|=1$. This, together with $|\langle \bv_j,\ba_k \rangle| = t$, implies
\begin{align}\label{eq7:x-t}
t = |x|\|\bv_j\|.
\end{align}
Since $\|\tbv_j\|=1$, there exists an orthogonal matrix $\bU \in \mO^{d_\ell}$ such that $\bU\tbv_j=\be_1$. Let 
\begin{align}\label{eq1:bbk}
\tilde{\bb}_k  := \frac{\bU^T\ba_k - x \be_1 }{\sqrt{1-x^2}}
\end{align}
and $\bc_k \in \R^{d_{\ell}-1}$ such that $\bc_k:=(\tilde{b}_{k2},\cdots,\tilde{b}_{kd_{\ell}})$. According to Lemma \ref{lem:unif-dis}, we obtain 
\begin{align}\label{eq8:tb}
\bU\bb_k \sim \tilde{\bb}_k
\end{align}
such that $\tilde{b}_{k1}=0$ and $\bc_k \sim \mathrm{Unif}(\S^{d_{\ell}-2})$. Besides, let 
\begin{align}\label{eq0:tba}
\bw_i := \bv_i - \langle \bv_i,\tilde{\bv}_j \rangle \tilde{\bv}_j.
\end{align}
According to \eqref{eq2:decomp-ak}, we have
\begin{align}\label{eq3:lem-ex-delta-ijk}
\langle \bv_i,\ba_k \rangle & = x \langle \bv_i, \tilde{\bv}_j \rangle  + \sqrt{1-x^2} \langle \bv_i,\bb_k \rangle  = x \langle \bv_i, \tilde{\bv}_j \rangle + \sqrt{1-x^2} \langle \bw_i, \bb_k \rangle,
\end{align}
where the second equality is due to \eqref{eq0:tba} and $\langle \tbv_j, \bb_k \rangle = 0$. According to $\langle \bw_i,\tbv_j \rangle = 0$, we have $\langle \bU^T\bw_i, \bU\tbv_j \rangle = \langle \bU^T\bw_i, \be_1 \rangle = 0$. This, together with letting $\bu_i$ denote the $i$-th column of $\bU \in \mO^{d_\ell}$ and $\bm{d}_i :=\left(\bu_2^T\bw_i,\dots,\bu_d^T\bw_i \right) \in \R^{d_\ell-1}$, we have $\bu_1^T\bw_i=0$ and $\|\bm{d}_i\|=\|\bw_i\|$. It follows from this and \eqref{eq8:tb} that
\begin{align}\label{eq6:lem-ex-delta-ijk}
|\langle \bw_i, \bb_k \rangle| = |\langle \bU^T\bw_i, \bU\bb_k \rangle| \sim |\langle \bU^T\bw_i, \tilde{\bb}_k \rangle| = |\langle \bm{d}_i, \bc_k \rangle|.
\end{align}
Now, we are ready to compute the lower bound of $\E[a_{ik}a_{jk} \vert \bz_i,\bz_j]$. According to \eqref{eq:e-bijk}, \eqref{eq7:x-t}, \eqref{eq3:lem-ex-delta-ijk}, and \eqref{eq6:lem-ex-delta-ijk}, we have
\begin{align}\label{eq4:lem-ex-delta-ijk}
\P\left( |\langle \bv_i,\ba_k \rangle| \ge \tau \Bigm\vert \left| \langle \bv_j,\ba_k \rangle \right| = t,\bz_i,\bz_j \right)& \ge  \P\left( |\langle\bw_i, \bb_k  \rangle| \ge \frac{\tau + |x\langle \bv_i, \tbv_j \rangle|}{\sqrt{1-x^2}}  \Bigm\vert |x| = \frac{t}{\|\bv_j\|}, \bz_i,\bz_j \right) \notag\\
& = \P\left( |\langle \bm{d}_i, \bc_k \rangle| \ge \frac{\tau\|\bv_j\| + t|\langle \bv_i, \tilde{\bv}_j \rangle|}{\sqrt{\|\bv_j\|^2-t^2}} \Bigm\vert  \bz_i,\bz_j \right).
\end{align}
Then, let
\begin{align*}
h(t) :=  \frac{\left(\tau\|\bv_j\| + t|\langle \bv_i, \tilde{\bv}_j \rangle|\right)(\sqrt{d_\ell}+\alpha)}{ \left(\|\bv_i\| - |\langle \bv_i,\tilde{\bv}_j \rangle| \right) \sqrt{\|\bv_j\|^2-t^2}}.
\end{align*}
According to the argument in Lemma \ref{lem:prob-cond} with $\alpha=2\sqrt{\log N}+2$, $\bc_k \sim \mathrm{Unif}(\S^{d_\ell-2})$, and $\|\bm{d}_i\|=\|\bw_i\|$, we obtain
\begin{align}\label{eq5:lem-ex-delta-ijk}
\P\left( |\langle \bm{d}_i, \bc_k \rangle| \ge \frac{\tau\|\bv_j\| + t|\langle \bv_i, \tilde{\bv}_j \rangle|}{\sqrt{\|\bv_j\|^2-t^2}} \Bigm\vert \bz_i,\bz_j \right) & \ge  2 - 2\Phi\left( \frac{\left(\tau\|\bv_j\| + t|\langle \bv_i, \tilde{\bv}_j \rangle|\right)(\sqrt{d_\ell}+\alpha)}{\|\bw_i\|\sqrt{\|\bv_j\|^2-t^2}} \right) - 2N^{-2}  \notag\\
& \ge 2 - 2\Phi\left(  h(t) \right) - 2N^{-2},
\end{align}
where the second inequality is due to $\|\bw_i\| \ge  \|\bv_i\| - |\langle \bv_i,\tilde{\bv}_j \rangle | > 0$ according to \eqref{eq0:tba} and  the triangle inequality. According to  \eqref{eq:e-bijk}, \eqref{eq4:lem-ex-delta-ijk}, and \eqref{eq5:lem-ex-delta-ijk}, we have
\begin{align}\label{eq:low-bijk}
\E[a_{ik}a_{jk} \vert \bz_i,\bz_j] & \ge   \int_{\tau}^1   \left( 2 - 2\Phi\left(  h(t) \right) - 2N^{-2}\right)  d\P\left( |\langle \bz_j,\bz_k \rangle| \le t\Bigm\vert \bz_j \right) \notag\\
& \ge \int_{\tau}^{\frac{\|\bv_j\|\sqrt{\log N}}{\sqrt{d_\ell}}}  \left( 2 - 2\Phi\left(  h(t) \right) - 2N^{-2}\right)  d\P\left( |\langle \bz_j,\bz_k \rangle| \le t \Bigm\vert \bz_j \right) \notag\\
& \ge \int_{\tau}^{\frac{\|\bv_j\|\sqrt{\log N}}{\sqrt{d_\ell}}}  \left( 2 - 2\Phi\left(  \beta_{ij} \right) - 2N^{-2}\right)  d\P\left(  |\langle \bz_j,\bz_k \rangle| \le t \Bigm\vert \bz_j \right)\notag \\
& = \left( 2 - 2\Phi\left( \beta_{ij} \right) - 2N^{-2}\right) \P\left( \tau \le  |\langle \bz_j,\bz_k \rangle| \le  \frac{\|\bv_j\|\sqrt{\log N}}{\sqrt{d}} \Bigm\vert \bz_j \right),
\end{align}
where the last inequality is because $h(t)$ is an increasing function. Next, by letting
\begin{align*}
g(t) := \frac{\left(\tau\|\bv_j\| - t|\langle \bv_i, \tilde{\bv}_j \rangle|\right)(\sqrt{d_\ell}-\alpha)}{ \|\bv_i\|\|\bv_j\|},
\end{align*}
we can obtain the following inequality by the same argument as \eqref{eq4:lem-ex-delta-ijk} and \eqref{eq5:lem-ex-delta-ijk}:
\begin{align*}
 \P\left( |\langle \bv_i,\ba_k \rangle| \ge \tau \Bigm\vert  \langle \bv_j,\ba_k \rangle = t,\bz_i,\bz_j \right) & \le 2 - 2\Phi\left(  \frac{\left(\tau\|\bv_j\| - t|\langle \bv_i, \tilde{\bv}_j \rangle|\right)(\sqrt{d_\ell} - \alpha)}{ \|\bw_i\|  \sqrt{\|\bv_j\|^2-t^2}} \right) + 2N^{-2} \\
& \le  2 - 2\Phi\left(g(t)\right) + 2N^{-2},
\end{align*}
where the last inequality is due to $\|\bw_i\| \le \|\bv_i\|$. 
Besides, it holds for $t\in (\tau,\|\bv_j\|\sqrt{\log N}/\sqrt{d_\ell}]$ that
\begin{align*}
g(t) \ge \beta_{ij}^\prime. 
\end{align*}
%\begin{align*}
%\|\bv_j\||\langle \bv_i, \tilde{\bv}_j \rangle| \ge \tau \frac{\|\bv_j\| \sqrt{\log N}}{\sqrt{d}}\ge \tau t.
%\end{align*}
These, together with \eqref{eq:e-bijk}, imply
\begin{align}\label{eq:up-bijk}
\E[a_{ik}a_{jk} \vert \bz_i,\bz_j] & \le \int_{\tau}^1   \left( 2 - 2\Phi\left(  g(t) \right) + 2N^{-2}\right)  d\P\left( |\langle \bz_j,\bz_k \rangle| \le t \Bigm\vert \bz_j \right)\notag\\
% & \le   \left(2-2\Phi\left(\frac{\left(\tau\|\bU_{\ell}^{*^T}\bz_j\| -  |\langle \bU_{\ell}^{*^T}\bz_i, \tilde{\ba}_j \rangle|\right)(\sqrt{d}-\alpha)}{ \|\bU_{\ell}^{*^T}\bz_i\|\sqrt{\|\bU_{\ell}^{*^T}\bz_j\|^2-\tau^2}}\right)+2N^{-2}\right)   \int_{\tau}^1 d\P\left( |\langle \bz_j,\bz_k \rangle| \le t \Bigm\vert \bz_j \right) \notag\\
& = \int_{\tau}^{\frac{\|\bv_j\|\sqrt{\log N}}{\sqrt{d_\ell}}}   \left( 2 - 2\Phi\left(  g(t) \right) + 2N^{-2}\right)  d\P\left( |\langle \bz_j,\bz_k \rangle| \le t \Bigm\vert \bz_j \right) +  \notag\\
&\quad \int^{1}_{\frac{\|\bv_j\|\sqrt{\log N}}{\sqrt{d_\ell}}}   \left( 2 - 2\Phi\left(  g(t) \right) + 2N^{-2}\right)  d\P\left( |\langle \bz_j,\bz_k \rangle| \le t \Bigm\vert \bz_j \right) \notag\\
& \le   \left( 2 - 2\Phi\left(  \beta_{ij}^\prime \right) + 2N^{-2}\right)   \P\left(  \tau \le |\langle \bz_j,\bz_k \rangle| \le \frac{\|\bv_j\|\sqrt{\log N}}{\sqrt{d_\ell}} \Bigm\vert \bz_j  \right) + \notag\\
&\quad  \left(2+2N^{-2}\right) \P\left( \frac{\|\bv_j\|\sqrt{\log N}}{\sqrt{d_\ell}} \le   |\langle \bz_j,\bz_k \rangle| \le 1 \Bigm\vert \bz_j \right).
\end{align} 
\end{proof}

\begin{proof}[Proof of Proposition \ref{prop:spec-gap}]
Suppose that \eqref{rst1:eq-norm-Uz} and \eqref{rst2:eq-norm-Uz} hold, which happens with probability at least $1-5K^2/N$ according to Lemma \ref{lem:norm-Uz}. For ease of exposition, let $\Delta := \bA - \E[\bA]$. Recall the definition of $p_k$ and $q_{k\ell}$ for $1\le k \neq \ell \le K$ in Lemma \ref{lem:p-q}. It follows from \eqref{step:thresholding} and Lemma \ref{lem:p-q} that the $(i,j)$-th element of $\bA$ satisfies $a_{ij} \sim \mathbf{Bern}(p_{ij})$ such that
\begin{align}\label{eq0:lem-spec-gap}
p_{ij} = \begin{cases}
p_k, & \text{if}\ \bz_i,\bz_j \in  S_k, \\
q_{k\ell}, &  \text{if}\ \bz_i\in S_k,\ \bz_j \in S_\ell,\ \text{and}\ k\neq \ell.
\end{cases}
\end{align}
According to \eqref{p:value}, \eqref{tau}, and $d_{\min} \gtrsim \log N$, one can verify that $p_k \in (0,1)$ is a constant for all $k\in [K]$. According to \eqref{q:value} and \eqref{tau}, we have for $1\le k \neq \ell \le K$,
\begin{align}\label{q:upper-bound}
q_{k\ell} & \le 2 - 2\Phi\left(\frac{\max_{k\neq \ell}\mathrm{aff}(S_k,S_\ell) + \alpha}{ \mathrm{aff}(S_k,S_\ell)+\alpha}\frac{(\sqrt{d_k}-\alpha)(\sqrt{d_\ell}-\alpha)}{(\sqrt{d_{\max}}-\alpha)^2}\right) + 4N^{-2} \notag \\
& \le 2-2\Phi\left( \frac{(\sqrt{d_k}-\alpha)(\sqrt{d_\ell}-\alpha)}{(\sqrt{d_{\max}}-\alpha)^2} \right) + 4N^{-2}.
\end{align} 
Now, we are devoted to bounding $\|\Delta\|^2=\|\Delta^2\|$. First, we consider the diagonal elements of $\Delta^2$. According to \eqref{step:thresholding}, we note that $a_{ij}$ for all $j\in [n]$ are mutually independent conditioned on $\bz_i \in \R^N$. This, together with $\delta_{ij}=a_{ij}-\E[a_{ij}] \in \left\{1-p_{ij},-p_{ij}\right\}$ and the Hoeffding's inequality for general bounded random variables (see, e.g., \citet[Theorem 2.2.6]{vershynin2018high}), yields that
\begin{align*}
\P\left( \left| \sum_{j=1}^N \left(\delta_{ij}^2 - \E[\delta_{ij}^2] \right) \right| \ge \sqrt{N\log N} \Bigm\vert \bz_i \right) \le 2\exp\left( -\frac{2N\log N}{N} \right) = 2N^{-2}.
\end{align*}
This, together with the union bound, yields that it holds with probability at least $1-2N^{-1}$ that for all $i\in [N]$,
\begin{align}\label{eq1:lem-spec-gap}
 \left| \sum_{j=1}^N \left(\delta_{ij}^2 - \E[\delta_{ij}^2] \right) \right| \le \sqrt{N\log N}. 
\end{align}
Due to the fact that $p_k \in (0,1)$ is a constant for all $k \in [K]$ and \eqref{q:upper-bound}, we have for all $1\le i < j \le N$,
\begin{align*}
\E[\delta_{ij}^2] = \E[a_{ij}^2] - \E^2[a_{ij}] = p_{ij}(1-p_{ij}) 
\end{align*}
is less than some constant. According to this and \eqref{eq1:lem-spec-gap}, it holds with probability at least $1-2N^{-1}$ that for all $i\in [N]$,
\begin{align}\label{eq2:lem-spec-gap}
\left|(\Delta^2)_{ii}\right| =  \left| \sum_{j=1}^N \delta_{ij}^2 \right|   \le \left|\sum_{j=1}^N \E[\delta_{ij}^2]\right| + \sqrt{N\log N} \lesssim N.
\end{align} 
Next, we consider the off-diagonal elements of $\Delta^2$.  According to \eqref{step:thresholding}, we note that $a_{ik}a_{jk}$ for all $k \neq i$ and $k\neq j$ are mutually independent conditioned on $\bz_i,\bz_j \in \R^N$ for all $1 \le i \neq j \le N$. This, together with $\delta_{ij}=a_{ij}-\E[a_{ij}]$ and the Hoeffding's inequality for general bounded random variables (see, e.g., \citet[Theorem 2.2.6]{vershynin2018high}), yields that
\begin{align*}
\P\left( \left| \sum_{k \neq i, k \neq j} \left(\delta_{ik}\delta_{jk} - \E[\delta_{ik}\delta_{jk}] \right) \right| \ge \sqrt{2N\log N} \Bigm\vert \bz_i,\bz_j \right) \le 2\exp\left( -\frac{4N\log N}{N} \right) = 2N^{-4}.
\end{align*}
According to Jensen's inequality, Lemma \ref{lem:ex-delta-ijk}, and $\E[\delta_{ik}\delta_{jk}\vert \bz_i,\bz_j]=\E[a_{ik}a_{jk} \vert \bz_i,\bz_j] - \E[a_{ik}\vert \bz_i]\E[a_{jk} \vert \bz_j]$, we obtain for $k\neq i, k\neq j$,
\begin{align*}
\left| \E[\delta_{ik}\delta_{jk} ] \right| \le  \E\left[\left|\E[\delta_{ik}\delta_{jk}\vert \bz_i,\bz_j] \right| \right] \lesssim \frac{d_{\max}}{d_{\min}\sqrt{\log N}}.
\end{align*}
These, together with the union bound, yields that it holds with probability at least $1-2N^{-2}$ that for all $1 \le i \neq j \le N$,
\begin{align*}
 \left| \sum_{k \neq i, k \neq j}  \delta_{ik}\delta_{jk} \right| & \le  \left| \sum_{k \neq i, k \neq j} \left(\delta_{ik}\delta_{jk} - \E[\delta_{ik}\delta_{jk}] \right) \right|  + \left| \sum_{k \neq i, k \neq j} \E[\delta_{ik}\delta_{jk}]  \right| \lesssim \sqrt{2N\log N} + \frac{d_{\max}N}{d_{\min}\sqrt{\log N}}. 
\end{align*}
As a result, it holds with probability at least $1-2N^{-2}$ that for any $1 \le i \neq j \le N$,
\begin{align}\label{eq3:lem-spec-gap}
\left|(\Delta^2)_{ij}\right| =  \left| \sum_{k=1}^N \delta_{ik}\delta_{jk}  \right| \lesssim \frac{d_{\max}N}{d_{\min}\sqrt{\log N}},
\end{align} 
Applying the union bound to \eqref{eq2:lem-spec-gap} and \eqref{eq3:lem-spec-gap} yields that
\begin{align*}
\|\Delta^2\| \le \max_{i\in [N]}\left|(\Delta^2)_{ii}\right| + \sqrt{\sum_{i\neq j} \left|(\Delta^2)_{ij}\right|^2} \lesssim  N +  \frac{d_{\max}N^2}{d_{\min}\sqrt{\log N}}
\end{align*}
holds with probability at least $1-6K^2N^{-1}$. This further implies 
\begin{align*}
\|\Delta \| \lesssim \sqrt{\frac{d_{\max}}{d_{\min}}}\frac{N}{\sqrt[4]{\log N}}.
\end{align*}
\end{proof}

\subsection{Proof of Theorem \ref{thm:init}} \label{appenSubsec:pfThmInit}
\begin{proof}
Suppose that \eqref{specgap} holds, which happens with probability at least $1-6K^2N^{-1}$ according to Proposition \ref{prop:spec-gap}. Given $\bz_i \in S_k^*$ and $\bz_j \in S_\ell^*$, recall that $p_{k\ell}=\P\left( |\langle \bz_i,\bz_j \rangle| \ge \tau \right)$ denotes the connection probability between any pair of data points that respectively belong to the subspaces $S_k^*$ and $S_\ell^*$ for all $1\le k,\ell \le K$. Let $\bB:=\{b_{k\ell}\}_{1\le k,\ell \le K}$, $\bC:=\bH^*\bB\bH^{*^T}$, $\bP:=\{p_{k\ell}\}_{1\le k,\ell \le K}$, and $\bm{D}:=\bH^*\bP\bH^{*^T}$, where $b_{k\ell}$ is defined in \eqref{bkl}. In addition, let $\hat{\bU},\bU \in \R^{n\times K}$ be respectively the eigenvectors of $\bA$ and $\bC$ associated with the $K$ leading eigenvalues. According to \eqref{step:thresholding}, one can verify that
\begin{align}\label{E[A]}
\E[\bA] = \bm{D} - \mathrm{diag}(\bm{D}).
\end{align}
We claim that $\bC$ is of rank $K$ and its smallest singular value is larger than $N_{\min}\gamma$, where $\gamma \ge 1 - \Phi(c)$ is given in Lemma \ref{lem:sing-B}. Indeed, let $\bm{\Lambda}=\mathrm{diag}\left(\sqrt{N_1},\dots,\sqrt{N_K}\right)$. Then, we have
\begin{align*}
\bC = \bH^*\bm{\Lambda}^{-1}\bm{\Lambda}\bB\bm{\Lambda}\left(\bH^{*}\bm{\Lambda}^{-1}\right)^T.
\end{align*}
One can verify that $\bH^*\bm{\Lambda}^{-1}$ has orthonormal columns and 
\begin{align*}
\sigma_{\min}(\bC) \ge \sigma_{\min}\left( \bm{\Lambda}\bB\bm{\Lambda} \right) \ge \sigma_{\min}^2\left(\bm{\Lambda} \right)\sigma_{\min}(\bB) = N_{\min}\gamma. 
\end{align*}
According to \eqref{E[A]} and Lemma \ref{lem:p-q}, we have
\begin{align*}
\|\E[\bA] - \bC\| & = \|\bm{D}-\bC-\mathrm{diag}(\bm{D})\| \le \|\bm{D}-\bC \| + \max_{1 \le k \le K}p_{kk} \\
& \le \|\bHs\|^2\|\bB-\bP\| + 1 \le \|\bH^{*^T}\bHs\|\|\bB-\bP\|_F   + 1 \lesssim \frac{ \kappa_d N_{\max}}{\sqrt{\log N}}.
\end{align*}
This, together with \eqref{specgap}, yields that 
\begin{align*}
\|\bA-\bC\|  \le \|\bA - \E[\bA] \| + \|\E[\bA] - \bC\| \lesssim \frac{\sqrt{\kappa_d} N}{\sqrt[4]{\log N}},
\end{align*}
where the last inequality is due to $\kappa_d \lesssim \sqrt{\log N}$. This, together with \citet[Lemma 5.1]{lei2015consistency}, $ \gamma \ge 1 - \Phi(c) $, and the fact that $\kappa_d$ is a constant, yields that there exists a $\bQ \in \mO^K$ such that
\begin{align}\label{eq1:thm-init}
\|\hat{\bU} - \bU\bQ\|_F \le \frac{2\sqrt{2K}}{N_{\min}\gamma}\|\bA-\bC\| \lesssim \frac{\sqrt{\kappa_d} N }{N_{\min} \sqrt[4]{\log N}}.
\end{align} 
According to \citet[Lemma 2.1]{lei2015consistency}, we have $\bU=\bH^*\bX$ for some $\bX \in \R^{K\times K}$ with $\|\bx_k  - \bx_\ell \| = \sqrt{1/N_k+1/N_\ell}$, where $\bx_k$ denotes $k$-th row of $\bX$. By letting $\bX^\prime=\bX\bQ$, we obtain
\begin{align*}
\bU\bQ =  \bH^*\bX^\prime,
\end{align*}  
where  $\|\bx_k^\prime - \bx_\ell^\prime\| = \sqrt{1/N_k+1/N_\ell}$. This, together with setting $\delta_k=1/{\sqrt{N_k}}$ in \citet[Lemma 5.3]{lei2015consistency} and \eqref{eq1:thm-init}, yields that 
\begin{align*}
\min_{ k \in [K]} N_k\delta_k^2 = 1 \gtrsim \frac{\kappa_d N^2}{N^2_{\min} \sqrt{\log N}}   \gtrsim  \|\hat{\bU} - \bU\bQ\|_F^2
\end{align*} 
where the first inequality is due to $\kappa_N^2\kappa_d  \le \sqrt{\log N}$. This implies that there exists a $\bQ \in \mO^K$ such that
\begin{align}\label{eq2:thm-init}
 \frac{\|\hat{\bU} - \bU\bQ\|_F^2}{\delta_k^2} \lesssim N_k,\ \text{for all}\ k \in [K].
\end{align} 
Let $\bar{\bU} = \bH^0\hat{\bX}$, where $(\bH^0,\hat{\bX})$ is an $(1+\varepsilon)$-approximate solution to Problem \eqref{k-means}. Moreover, we define
$T_k=\left\{ i\in \mC_k^*: \bar{\bu}_k - \bar{\bu}_\ell \ge \delta_k/2 \right\}$ for all $k\in [K]$, where $\bar{\bu}_i$ denote the $i$-th row of $\bar{\bU}$.
According to \eqref{eq2:thm-init} and \citet[Lemma 5.3]{lei2015consistency}, we have
\begin{align*}
\sum_{k=1}^K \frac{|T_k|}{N_k} \lesssim  \|\hat{\bU} - \bU\bQ\|_F^2 \lesssim  \frac{\kappa_d N^2}{N^2_{\min} \sqrt{\log N}} \lesssim \frac{\kappa_d\kappa_N^2}{\sqrt{\log N}},
\end{align*}
where the second inequality is due to \eqref{eq1:thm-init}. This  implies
\begin{align*}
\sum_{k=1}^K  |T_k| \lesssim \kappa_d\kappa_N^2\frac{N_{\max}}{\sqrt{\log N}}
\end{align*} 
Note that \citet[Lemma 5.3]{lei2015consistency} ensures that the membership is correctly recovered outside of $\cup_{k\in [K]} T_k$, then we have
\begin{align*}
d_F^2(\bH,\bHs) \lesssim  \kappa_d\kappa_N^2\frac{N_{\max}}{\sqrt{\log N}},
\end{align*}
which implies \eqref{eq:H0}. Then, we complete the proof. 
\end{proof}

\section{Proofs in Section \ref{subsec:update}}\label{appenSec:pfUpdate}

Recall that given an $\bH \in \mM^{N\times K}$, $\mC_k=\{i\in [N]:h_{ik} = 1\}$, $n_k=|\mC_k|$ for all $k \in [K]$, and $n_{k\ell}=|\mC_k \cap \mC_\ell^*|$ for all $k, \ell \in [K]$. We can verify that the number of misclassified points in $\{\mC_1,\dots,\mC_K\}$ represented by $\bH$ with respect to $\{\mC^*_1,\dots,\mC^*_K\}$ represented by $\bH^*$ is 
$
\|\bH - \bH^*\bQ_{\pi^*}\|_F^2/2,
$
where $\bQ_{\pi^*} \in \argmin_{\bQ \in \Pi_K} \|\bH - \bH^*\bQ\|_F$. Moreover, we can verify that for a permutation $\pi:[K] \rightarrow [K]$,
\begin{align}\label{n-B-kl}
\frac{1}{2}\|\bH-\bHs\bQ_{\pi}\|_F^2 = \sum_{k=1}^K \sum_{\ell\neq  \pi^{{-1}}(k)} n_{k\ell} = \sum_{k=1}^K \sum_{\ell \neq \pi(k)} n_{\ell k}.
\end{align}
and
\begin{align}\label{n:kl}
\sum_{\ell:\ell\neq  \pi^{{-1}}(k)} n_{k\ell} = \frac{1}{2}  \sum_{\ell:\ell\neq \pi^{{-1}}(k)} \sum_{i\in \mC_k \cap \mC_\ell^*} \|\bh_i-\bh_i^*\bQ_{\pi}\|^2,\ W_k(\bH) = \max\left\{ \sum_{\ell:\ell\neq \pi^{-1}(k)} n_{k\ell}, \sum_{\ell:\ell\neq \pi(k)} n_{\ell k} \right\},
\end{align}
where $W_k(\bH)$ is defined in \eqref{def:Gk}.
Using Lemma \ref{lem:cov-esti}, we can present a spectral bound on the deviation of the sample covariance of random vectors that follow a uniform distribution over the sphere from its mean. 
\begin{coro}\label{coro:cov-esti}
Suppose that $d_k \ge 4\log\left(Kd_k^2N\right)$ for all $k \in [K]$. For all $k,\ell \in [K]$, it holds with probability at least $1-2K/(Nd_{\min}^2)$ that 
\begin{align}\label{rst:coro-cov-esti}
\left\| \bm{\Psi}_{k\ell} - \frac{1}{d_\ell}\bI_{d_\ell}  \right\| \le \frac{5c_1}{4d_\ell} \left( \sqrt{\frac{d_\ell}{n_{k\ell}}} + \frac{d_\ell}{n_{k\ell}} \right),
\end{align}
where $\bm{\Psi}_{k\ell}$ is defined in \eqref{def:Bkl} and $c_1 > 0$ is an absolute constant.
\end{coro}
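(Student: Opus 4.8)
The plan is to obtain the corollary as a direct application of Lemma~\ref{lem:cov-esti} to each index pair $(k,\ell)$, followed by a union bound; the hypothesis $d_k \ge 4\log(Kd_k^2N)$ is precisely what lets the concentration bound be stated in the clean form \eqref{rst:coro-cov-esti}. First I would fix $k,\ell \in [K]$ and consider the index set $\mC_k \cap \mC_\ell^*$, which has cardinality $n_{k\ell}$. Every $i$ in this set satisfies $\bz_i \in S_\ell^*$, so by the semi-random UoS model the corresponding coefficient vectors $\{\ba_i : i \in \mC_k \cap \mC_\ell^*\}$ are i.i.d.\ $\mathrm{Unif}(\S^{d_\ell-1})$ (a fixed sub-collection of i.i.d.\ uniform vectors is again i.i.d.\ uniform). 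Hence $\bm{\Psi}_{k\ell}$ is exactly the sample second-moment matrix to which Lemma~\ref{lem:cov-esti} applies, with $m = n_{k\ell}$ and ambient dimension $d = d_\ell$.

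Next I would invoke Lemma~\ref{lem:cov-esti} with deviation parameter $u = u_\ell := \log(Kd_\ell^2N)$. This yields, with probability at least $1 - 2e^{-u_\ell} = 1 - 2/(Kd_\ell^2N)$,
\begin{align*}
\left\| \bm{\Psi}_{k\ell} - \frac{1}{d_\ell}\bI_{d_\ell} \right\| \le \frac{c_1}{d_\ell}\left( \sqrt{\frac{d_\ell + u_\ell}{n_{k\ell}}} + \frac{d_\ell + u_\ell}{n_{k\ell}} \right).
\end{align*}
The key use of the hypothesis comes here: since $d_\ell \ge 4\log(Kd_\ell^2N) = 4u_\ell$, we have $d_\ell + u_\ell \le \tfrac{5}{4}d_\ell$. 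Substituting this and using the elementary inequality $\sqrt{5/4}\le 5/4$ on the square-root term then gives
\begin{align*}
\frac{c_1}{d_\ell}\left( \sqrt{\frac{(5/4)d_\ell}{n_{k\ell}}} + \frac{(5/4)d_\ell}{n_{k\ell}} \right) \le \frac{5c_1}{4d_\ell}\left( \sqrt{\frac{d_\ell}{n_{k\ell}}} + \frac{d_\ell}{n_{k\ell}} \right),
\end{align*}
which is precisely \eqref{rst:coro-cov-esti}.

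Finally I would take a union bound over all $K^2$ pairs $(k,\ell)$. The failure probability for the pair $(k,\ell)$ is at most $2/(Kd_\ell^2N) \le 2/(Kd_{\min}^2N)$; summing over the $K$ choices of $k$ for each fixed $\ell$ gives $2/(d_\ell^2N)$, and summing the latter over $\ell$ gives a total failure probability at most $\sum_{\ell} 2/(d_\ell^2N) \le 2K/(d_{\min}^2N)$, matching the stated success probability $1 - 2K/(Nd_{\min}^2)$.

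I do not anticipate a serious obstacle, as the corollary is essentially a bookkeeping wrapper around Lemma~\ref{lem:cov-esti}. The only points demanding care are (i) calibrating $u_\ell$ so that the per-pair failure probability is small enough for the union bound to close at the claimed level while simultaneously keeping $d_\ell + u_\ell$ within a constant factor of $d_\ell$ --- this is exactly what $d_k \ge 4\log(Kd_k^2N)$ buys --- and (ii) verifying that restricting to the fixed subset $\mC_k \cap \mC_\ell^*$ preserves the i.i.d.\ uniform structure required by the lemma.
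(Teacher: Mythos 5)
Your proposal is correct and follows essentially the same route as the paper's proof: apply Lemma~\ref{lem:cov-esti} with $u=\log(Kd_\ell^2 N)$, use $d_\ell \ge 4\log(Kd_\ell^2N)$ to absorb $d_\ell+u$ into $\tfrac{5}{4}d_\ell$ (together with $\sqrt{5/4}\le 5/4$), and close with a union bound over the $K^2$ pairs to reach the stated probability $1-2K/(Nd_{\min}^2)$. No gaps; your bookkeeping of the per-pair failure probabilities matches the paper's.
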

\begin{proof}[Proof of Corollary \ref{coro:cov-esti}] 
Since $i\in  \mC_\ell^*$, we have $\ba_i \in \mathrm{Unif}(\S^{d_\ell-1})$ according to the UoS model in Definition \ref{UoS}. Applying Lemma \ref{lem:cov-esti} to \eqref{def:Bkl} with  $u=\log\left( K d_\ell^2 N \right)$ yields that it holds with probability at least $1-2/(Kd_\ell^2N)$ that 
\begin{align*}
\left\| \bm{\Psi}_{k\ell} - \frac{1}{d_\ell}\bI_{d_\ell}  \right\| \le \frac{c_1}{d_\ell} \left( \sqrt{\frac{d_\ell+\log(Kd_\ell^2 N)}{n_{k\ell}}} + \frac{d_\ell+\log(K d_\ell^2 N)}{n_{k\ell}} \right) \le \frac{5c_1}{4d_\ell} \left( \sqrt{\frac{d_\ell}{n_{k\ell}}} + \frac{d_\ell}{n_{k\ell}} \right),
\end{align*}
where the second inequality is due to $d_\ell \ge 4\log(Kd_\ell^2N)$. This, together with the union bound, implies the desired result.  
\end{proof}

\subsection{Proof of Lemma \ref{lem:spec-B-kl}}

\begin{proof}[Proof of Lemma \ref{lem:spec-B-kl}] 
Suppose that \eqref{rst:coro-cov-esti} holds for all $k,\ell \in [K]$, which happens with probability at  least $1-2K/(Nd_{\min}^2)$ according to $N_k \gtrsim d_k \gtrsim \log N$ and Corollary \ref{coro:cov-esti}. According to \eqref{def:Bkl} and \eqref{n-1}, we have for all $k \in [K]$ ,
\begin{align*}
n_{\pi(k)k}  = |\mC_{\pi(k)} \cap \mC_{k}^*| = |\mC^*_k| - \sum_{\ell:\ell \neq {\pi(k)}} |\mC_\ell \cap \mC_{k}^*| = N_k - \sum_{\ell:\ell \neq \pi(k)}n_{\ell k} \ge N_{k} - W_k(\bH) \ge \frac{7}{8}N_{k}. 
\end{align*}
This, together with \eqref{rst:coro-cov-esti}, yields that for all $k\in [K]$,
\begin{align*}
\left\|\bm{\Psi}_{\pi(k)k} - \frac{1}{d_k}\bI_{d_k}\right\| \le \frac{5c_1}{4d_k}\left( \sqrt{\frac{8d_k}{7N_{k}}} + \frac{8d_k}{7N_{k}}\right) \le \frac{5c_1}{2d_k}\sqrt{\frac{8d_k}{7N_{k}}} \le \frac{1}{32d_k},
\end{align*}
where the third and last inequalities are due to $N_k \gtrsim d_k$ for all $k \in [K]$. This, together with Weyl's inequality, yields \eqref{rst1:lem-spec-B-kl}. Again, applying \eqref{rst:coro-cov-esti} to $\bm{\Psi}_{\pi(k)\ell}$ for all $\ell \neq k$ yields 
\begin{align*}
\left\|\bm{\Psi}_{\pi(k)\ell} - \frac{1}{d_\ell}\bI_{d_\ell}\right\| \le \frac{5c_1}{4d_\ell}\left( \sqrt{\frac{d_\ell}{n_{\pi(k)\ell}}} + \frac{d_\ell}{n_{\pi(k)\ell}}\right). 
\end{align*}
This, together with Weyl's inequality, yields \eqref{rst2:lem-spec-B-kl}. Then, the proof is completed. 
\end{proof}

\subsection{Proof of Lemma \ref{lem:contra-U-less}} 

\begin{proof}[Proof of Lemma \ref{lem:contra-U-less}] 
Suppose that \eqref{rst1:lem-spec-B-kl} and \eqref{rst2:lem-spec-B-kl} hold, which happens with probability at least $1-2K /({d_{\min}^2 N})$ according to Lemma \ref{lem:spec-B-kl}. Recall that
\begin{align}\label{grad-U-k}
\bG_{\bU_k}(\bH)=\sum_{i=1}^N h_{ik} \bz_i\bz_i^T\ \text{for all}\ k \in [K]. 
\end{align}
% According to Definition \ref{memship-matrix}, we know that $\bH \in \mM^{N\times K}$ represents a partition of $[N]$ into $K$ clusters $\mC_1,\dots,\mC_K$. This implies that 
It follows from $\bH \in \mM^{N\times K}$ and $\mC_k=\{i\in [N]:h_{ik} = 1\}$ that $h_{ik}=1$ if $i \in \mC_k$ and $h_{ik}=0$ otherwise for all $i \in [N]$. Then, we note that 
\begin{align*}
\bG_{\bU_{\pi(k)}}(\bH) = \sum_{i\in \mC_{\pi(k)}} \bz_i\bz_i^T = \sum_{\ell=1}^K \sum_{i\in \mC_{\pi(k)} \cap \mC_\ell^*} \bz_i\bz_i^T  = \sum_{\ell=1}^K \sum_{i\in \mC_{\pi(k)} \cap \mC_\ell^*} \bU_\ell^*\ba_i \ba_i^T \bU_\ell^{*^T}   = \sum_{\ell=1}^K n_{{\pi(k)}\ell}  \bU_\ell^* \bm{\Psi}_{{\pi(k)}\ell} \bU_\ell^{*^T},
\end{align*}
where the third equality is due to \eqref{z=Ua} in Definition \ref{UoS} and the last equality follows from \eqref{def:Bkl}. % Observe that $\bG_{\bU_{\pi(k)}}(\bH)$ is positive semidefinite, and thus $\lambda_{ki} = \sigma_i\left( \bG_{\bU_{\pi(k)}}(\bH) \right)$ for all $i \in [d]$. 
To simplify the notations, we define 
\begin{align*}
\bA_\ell = \bU_\ell^* \bm{\Psi}_{{\pi(k)}\ell} \bU_\ell^{*^T}\ \text{for all}\ \ell \in [K],\quad \delta_i =  \sigma_i\left( \bG_{\bU_{\pi(k)}}(\bH) \right) - \sigma_{i+1}\left( \bG_{\bU_{\pi(k)}}(\bH) \right)\ \text{for all}\ i \in [d-1]. 
\end{align*}
On one hand, it follows from \eqref{rst1:lem-spec-B-kl}, $\sigma_{d_k}\left(\bm{\Psi}_{{\pi(k)}k}\right) \le \sigma_{d_k}(\bA_k)$, and $\sigma_1(\bA_k) \le \sigma_1\left(\bm{\Psi}_{{\pi(k)}k}\right) $ that
\begin{align}\label{eq8:lem-contra-U}
\frac{31}{32d_k} \le \sigma_{d_k}(\bA_k) \le \dots \le \sigma_1(\bA_k) \le \frac{33}{32d_k}.
\end{align}
On the other hand, it follows from $\bU_k^* \in \mO^{n\times d_k}$ that 
\begin{align}\label{eq9:lem-contra-U}
\sigma_{d_k+1}(\bA_k) = \dots = \sigma_{n}(\bA_k) = 0.
\end{align}
According to \eqref{def:Bkl}, \eqref{def:Gk}, and \eqref{init:U-noiseless}, we have for all $k\in [K]$,
\begin{align}\label{eq2:lem-contra-U}
n_{\pi(k)k} = |\mC_{k}^*| - \sum_{\ell:\ell \neq \pi(k)} |\mC_\ell \cap \mC_k^*| \ge N_k - W_k(\bH) \ge \frac{7}{8}N_{k},\ \sum_{\ell: \ell \neq k} n_{\pi(k)\ell} \le W_{\pi(k)}(\bH) \le \varepsilon N_{\min}.  
\end{align}
This, together with \eqref{rst2:lem-spec-B-kl}, yields that for all $ k \in [K]$ and $\ell \neq k$, 
\begin{align}\label{eq3:lem-contra-U}
n_{\pi(k)\ell} \sigma_1\left(\bm{\Psi}_{\pi(k)\ell}\right) & \le \frac{n_{\pi(k)\ell}}{d_\ell} + \frac{5c_1}{4}\sqrt{\frac{n_{\pi(k)\ell}}{d_\ell}} + \frac{5c_1}{4}   \le \frac{21n_{\pi(k)\ell}}{16d_\ell} + \frac{5c_1}{4}(c_1+1),  
\end{align}
where the second inequality is due to $2\sqrt{\alpha\beta} \le \alpha/\rho +\rho\beta$ for any $\alpha,\beta \ge 0$ and $\rho > 0$. Summing up \eqref{eq3:lem-contra-U} for all $ \ell \neq  k$ yields that for all $k\in [K]$,
\begin{align}\label{eq4:lem-contra-U}
\sum_{\ell:\ell \neq k} n_{\pi(k)\ell} \sigma_1\left(\bm{\Psi}_{\pi(k)\ell}\right) \le \frac{21}{16}\sum_{\ell:\ell \neq k} \frac{n_{\pi(k)\ell}}{d_\ell} + \frac{5c_1}{4}K(c_1+1)  \le  \frac{21\varepsilon N_{\min}}{16d_{\min}} + \frac{3\varepsilon N_k}{16 d_{\min}} \le \frac{3\varepsilon N_k}{2d_{\min}},
\end{align}
where the second inequality is due to \eqref{eq2:lem-contra-U} and $N_k \gtrsim d_k$. We now show \eqref{rst:dk}. According to $ \bG_{\bU_{\pi(k)}}(\bH) = n_{\pi(k)k}\bA_k + \sum_{\ell \neq k}n_{\pi(k)\ell}\bA_\ell$, we have for all $i\in [d-1]$,
\begin{align}\label{eq0:lem-contra-U}
\delta_i & =  \sigma_i\left( \bG_{\bU_{\pi(k)}}(\bH) \right)  -  \sigma_{i+1}\left(n_{\pi(k)k}\bA_k\right)  +  \sigma_{i+1}\left(n_{\pi(k)k}\bA_k\right) -  \sigma_{i+1}\left( \bG_{\bU_{\pi(k)}}(\bH) \right) \notag\\
& \le \sigma_{i}\left(n_{\pi(k)k}\bA_k\right) - \sigma_{i+1}\left(n_{\pi(k)k}\bA_k\right)  + 2\sigma_1\left( \sum_{\ell:\ell \neq k}n_{\pi(k)\ell}\bA_\ell \right), \notag\\
& \le n_{\pi(k)k}\left( \sigma_{i}\left(\bA_k\right) - \sigma_{i+1}\left(\bA_k\right)\right) + 2\sum_{\ell:\ell \neq k} n_{\pi(k)\ell} \sigma_1\left(\bm{\Psi}_{\pi(k)\ell}\right),
\end{align}
where the first inequality is due to Weyl's inequality. Plugging \eqref{eq8:lem-contra-U}, $n_{\pi(k)k} \le N_k$, and \eqref{eq4:lem-contra-U} into \eqref{eq0:lem-contra-U} yields that for all $i=1,\dots,d_k-1$,
\begin{align}\label{detal:dk-1}
\delta_i  \le \frac{N_k}{16d_k} + \frac{3\varepsilon N_k}{d_{\min}}\le \frac{7N_k}{16d_k},
\end{align}
where the second inequality is due to $\varepsilon \le d_{\min}/(8d_k)$.
Meanwhile, plugging \eqref{eq9:lem-contra-U} and \eqref{eq4:lem-contra-U} into \eqref{eq0:lem-contra-U} yields that for all $i=d_k+1,\dots,d$,
\begin{align}\label{detal:dk+1}
\delta_i \le  \frac{3\varepsilon N_k}{d_{\min}} \le \frac{3N_k}{8d_k},
\end{align}
where the second inequality is due to $\varepsilon \le d_{\min}/(8d_k)$. 
Note that $\bG_{\bU_{\pi(k)}}$ is a positive semidefinite matrix and satisfies
\begin{align}\label{eq1:lem-contra-U}
\sigma_{d_k}\left(\bG_{\bU_{\pi(k)}}(\bH)\right) & \ge \sigma_{d_k} \left(\bU_{k}^* n_{\pi(k)k} \bm{\Psi}_{\pi(k)k} \bU_{k}^{*^T}\right) - \sigma_1 \left( \sum_{\ell:\ell\neq k} \bU_\ell^* n_{\pi(k)\ell}  \bm{\Psi}_{\pi(k)\ell} \bU_\ell^{*^T} \right) \notag\\
& \ge n_{{\pi(k)k}}\sigma_{d_k}\left(\bm{\Psi}_{\pi(k)k}\right) - \sum_{\ell:\ell \neq k}n_{\pi(k)\ell}\sigma_1\left(\bm{\Psi}_{\pi(k)\ell}\right),
\end{align} 
where the first inequality is due to Weyl's inequality and the second inequality follows from $\sigma_d(\bA\bU^T) \ge \sigma_d(\bA)$, $\sigma_1(\bA\bU^T) \le \sigma_1(\bA)$ for $\bA \in \R^{d\times d}$ and $\bU \in \mO^{n\times d}$, and $\sigma_1(\bB+\bC) \le \sigma_1(\bB) + \sigma_1(\bC)$ for $\bB,\bC \in \R^{m\times n}$. 
Plugging \eqref{rst1:lem-spec-B-kl}, \eqref{eq2:lem-contra-U}, and \eqref{eq4:lem-contra-U} into \eqref{eq1:lem-contra-U} yields for all $k \in [K]$,
\begin{align}\label{eq5:lem-contra-U}
\sigma_{d_k}\left(\bG_{\bU_{\pi(k)}}(\bH)\right) \ge \frac{217N_{k}}{256d_k} -\frac{3\varepsilon N_k}{2d_{\min}} \ge \frac{217N_{k}}{256d_k} - \frac{3N_{k}}{16d_k} \ge \frac{169N_k}{256d_k},
\end{align}
where the second inequality is due to $\varepsilon \le d_{\min}/(8d_k)$. Applying Weyl's inequality to $\bG_{\bU_{\pi(k)}}(\bH)$ gives 
\begin{align*}
\sigma_{d_k+1}\left(\bG_{\bU_{\pi(k)}}(\bH)\right)  \le n_{\pi(k)k}\sigma_{d_k+1}\left(\bA_k\right) + \sigma_1\left( \sum_{\ell:\ell \neq k}n_{\pi(k)\ell}\bA_\ell \right) \le  \sum_{\ell:\ell \neq k}n_{\pi(k)\ell}\sigma_1\left( \bm{\Psi}_{\pi(k)\ell} \right) \le \frac{3N_k}{16d_{k}},
\end{align*}
where the second inequality is due to \eqref{eq9:lem-contra-U} and the last inequality follows from \eqref{eq4:lem-contra-U} and $\varepsilon \le d_{\min}/(8d_k)$.
This, together with \eqref{eq5:lem-contra-U}, yields
\begin{align*}
\delta_{d_k} \ge \frac{169N_k}{256d_k} - \frac{3N_k}{16d_{k}} = \frac{121N_k}{256d_k}.  
\end{align*}
This, together with \eqref{dk}, $\lambda_{\pi(k)i} = \sigma_i\left( \bG_{\bU_{\pi(k)}}(\bH) \right)$ for all $i \in [d]$ due to the positive semidefiniteness of $\bG_{\bU_{\pi(k)}}(\bH)$, \eqref{detal:dk-1}, and \eqref{detal:dk+1}, implies \eqref{rst:dk}. 

We next show \eqref{rst:lem-contra-U}. Note that
\begin{align}\label{eq7:lem-contra-U}
\left\| (\bI-\bU_{k}^*\bU_{k}^{*^T})\bG_{\bU_{\pi(k)}}(\bH) \right\| &= \left\| \sum_{\ell:\ell \neq k} (\bI-\bU_{k}^*\bU_{k}^{*^T})\bU_\ell^*n_{\pi(k)\ell}\bm{\Psi}_{\pi(k)\ell} \bU_\ell^{*^T} \right\| \notag \\
& \le \sum_{\ell:\ell \neq k} n_{\pi(k)\ell} \left\| (\bI-\bU_{k}^*\bU_{k}^{*^T})\bU_\ell^* \right\| \|\bm{\Psi}_{\pi(k)\ell}\| \le \sum_{\ell:\ell \neq k} n_{\pi(k)\ell} \sigma_1\left(\bm{\Psi}_{\pi(k)\ell}\right),
\end{align}
where the last inequality is due to $\left\| (\bI-\bU_{k}^*\bU_{k}^{*^T})\bU_\ell^* \right\|  \le 1$ for any $1\le k\neq \ell \le K$. Since $\bU_k$ consists of eigenvectors associated with the top $\bar{d}_k$ eigenvalues of $\bG_{\bU_k}(\bH)$ that is positive semidefinite, then we have $\bG_{\bU_k}(\bH)\bU_k = \bU_k\bm{\Sigma}_k$ by the eigenvalue equation, where $\bm{\Sigma}_k$ is a diagonal matrix with the $i$-th diagonal element being $\sigma_i\left(\bG_{\bU_k}(\bH)\right)$ for all $i\in [\bar{d}_k]$. This, together with \eqref{eq5:lem-contra-U}, implies $\bU_k=\bG_{\bU_k}(\bH)\bU_k\bm{\Sigma}_k^{-1}$ for all $k\in [K]$. According to \eqref{rst:dk}, we have
\begin{align}\label{eq6:lem-contra-U}
d(\bU_{\pi(k)},\bU_{k}^*) & = \left\| (\bI-\bU_{k}^*\bU_{k}^{*^T})\bU_{\pi(k)} \right\| = \left\| (\bI-\bU_{k}^*\bU_{k}^{*^T})\bG_{\bU_{\pi(k)}}(\bH)\bU_{\pi(k)}\bm{\Sigma}_{\pi(k)}^{-1} \right\| \notag \\
& \le \left\| (\bI-\bU_{k}^*\bU_{k}^{*^T})\bG_{\bU_{\pi(k)}}(\bH)\right\|\|\bm{\Sigma}_{\pi(k)}^{-1}\| \le \frac{ 256 d_k\sum_{\ell:\ell \neq k} n_{\pi(k)\ell} \sigma_1(\bm{\Psi}_{\pi(k)\ell})}{169N_{k}},
\end{align}
where the first equality uses \eqref{dist:subsp} and the last inequality is due to \eqref{eq5:lem-contra-U} and \eqref{eq7:lem-contra-U}. Substituting \eqref{eq3:lem-contra-U} into the above inequality and summing up from $k=1$ to $k=K$ yield that for all $k\in [K]$,
\begin{align*}
\sum_{k=1}^K d(\bU_{\pi(k)},\bU_{k}^*) & \le \sum_{k=1}^K \frac{256d_k}{169N_{k}}\cdot\frac{21}{16}\left( K(c_1+1)c_1 + \frac{1}{d_{\min}}\sum_{\ell:\ell \neq k} n_{\pi(k)\ell}   \right) \\
& \le  \frac{2d_{\max}}{N_{\min}}\left( K^2(c_1+1)c_1 + \frac{1}{d_{\min}}\sum_{k=1}^K \sum_{\ell:\ell \neq k} n_{\pi(k)\ell}   \right)  \\
& \le \frac{2d_{\max}}{N_{\min}} \left( \frac{1}{2d_{\min}}\|\bH-\bHs\bQ_{\pi}\|_F^2 + K^2(c_1+1)c_1 \right) \\
& \le  \frac{2d_{\max}}{N_{\min}}  \max\left\{ \frac{1}{d_{\min}} \|\bH-\bHs\bQ_{\pi}\|_F^2, 2K^2(c_1+1)c_1 \right\},
\end{align*}
where the third inequality is due to \eqref{n-B-kl} and $\sum_{k=1}^K \sum_{\ell\neq  \pi^{{-1}}(k)} n_{k\ell} = \sum_{k=1}^K \sum_{\ell\neq  k} n_{\pi(k)\ell}$. Then, we complete the proof. 
\end{proof}

\section{Proofs in Section \ref{subsec:assign}}\label{appenSec:pfAssign}

\subsection{Proof of Lemma \ref{lem:solution-T-h}} 

\begin{proof}[Proof of Lemma \ref{lem:solution-T-h}] 
Note that $\bh$ satisfying $\bh^T\bo_K = 1,\ \bh \in \{0,1\}^K$ is a vector that has exactly one $1$ and $(K-1)$ 0's. We can see that $\mT(\bg)$ is to find the minimum element of $\bg$. Then, the solution follows immediately. This also implies that for $\bQ \in \Pi_K$, $\bv \in \mT(\bg)$ if and only if $\bQ\bv \in \mT(\bQ\bg)$. 
\end{proof}

\subsection{Proof of Lemma \ref{lem:contra-T-h-less}} 

\begin{proof}[Proof of Lemma \ref{lem:contra-T-h-less}] 
According to Lemma \ref{lem:solution-T-h} and $g_\ell > g_k$ for a $k\in [K]$ and all $\ell \neq k$, we see that $\mT(\bg)$ is a singleton and $\{\bv\} = \mT(\bg)$ satisfies $v_k=1$ and $v_\ell = 0$ for all $\ell \neq k$. Let $\bg^\prime \in \R^K$ be arbitrary and $\bv^\prime \in \mT(\bg^\prime)$. It then follows from Lemma \ref{lem:solution-T-h} that $v^\prime_{k^\prime} = 1$ and $v_{\ell^\prime} = 0$ for some $k^\prime \in [K]$ and all $\ell^\prime \neq k^\prime$ satisfying $g^\prime_{k^\prime} \le g^\prime_{\ell^\prime}$. Suppose that $k^\prime = k$. Then, we have $\|\bv - \bv^\prime\|=0$, and thus \eqref{rst:contra-T-h} holds trivially. Suppose to the contrary that $k^\prime \neq k$. Then, we have $\|\bv - \bv^\prime\|=\sqrt{2}$. Moreover, we can compute
\begin{align*}
\|\bg-\bg^\prime\|^2 \ge \left( g_k-g^\prime_{k} \right)^2 +  \left(g_{k^\prime}-g^\prime_{k^\prime} \right)^2 \ge \frac{1}{2} ( \underbrace{g_k - g_{k^\prime}}_{\le -\delta} + \underbrace{g^\prime_{k^\prime} - g^\prime_{k}}_{\le 0} )^2 \ge \frac{1}{2}\delta^2.
\end{align*}
This, together with $\|\bv - \bv^\prime\|=\sqrt{2}$, implies the desired result \eqref{rst:contra-T-h}. 
\end{proof}

\subsection{Proof of Lemma \ref{lem-less:contra-H}} 

\begin{proof}[Proof of Lemma \ref{lem-less:contra-H}] 
% Suppose that \eqref{rst1:eq-norm-Uz} holds, which happens with probability at least $1-5K^2/N$ according to Lemma \ref{lem:norm-Uz}. 
Let the row vectors $\bg_i,\ \bg_i^* \in \R^K$ denote the $i$-th row of  $\bG_{\bH}(\bU)$ and $\bG_{\bH}(\bU^*)$, respectively. For all $i\in [N]$, note that $I_i=\{k \in [K]:h_{ik}^*=1\}$. 
According to the semi-random UoS model in Definition \ref{UoS}, we have for all $i\in [N]$ and $\ell \neq I_i$, 
\begin{align*}
g_{i\ell}^*  - g_{iI_i}^* & = \left( \|\bz_i\|^2  - \|\bU_\ell^{*^T}\bz_i\|^2\right) - \left( \|\bz_i\|^2 - \|\bU_{I_i}^{*^T}\bz_i \|^2 \right) =  \|\bU_{I_i}^{*^T}\bU_{I_i}^*\ba_i\|^2 - \|\bU_\ell^{*^T}\bz_i\|^2  \\
& = 1 - \|\bU_\ell^{*^T}\bz_i\|^2   \ge 1 - \max_{\ell \neq I_i} \|\bU_\ell^{*^T}\bz_i\|^2,
\end{align*} 
where the last equality is due to $\|\ba_i\|=1$. This, together with Lemma \ref{lem:solution-T-h} and $\|\bU_{\ell}^{*^T}\bz_i\| < 1$ for all $\ell \neq I_i$, implies
\begin{align}\label{eq2:lem-contra-H}
\{\bHs\}  =  \mT\left(\bG_{\bH}(\bU^*)\right). 
\end{align}
Besides, we note that for each $i\in [N]$ and $\bQ_\pi \in \Pi_K$, 
\begin{align*}%\label{eq3:lem-contra-H}
\|\bg_i\bQ_\pi^T-\bg_i^*\|^2 & = \sum_{k=1}^K \left( \|\bU_{\pi(k)}^T\bz_i\|^2 - \|\bU_{k}^{*^T}\bz_i\|^2 \right)^2  \le \sum_{k=1}^K \|\bU_{\pi(k)}\bU_{\pi(k)}^{T} - \bU_k^*\bU_k^{*^T} \|^2 \|\bz_i\|^4  = \sum_{k=1}^K d^2(\bU_{\pi(k)},\bU_k^*),
\end{align*}
where the first equality is due to $\bg\bQ_{\pi}^T = \begin{bmatrix}
g_{\pi(1)} & \dots & g_{\pi(K)}
\end{bmatrix}$. This, together with Lemma \ref{lem:contra-T-h-less} and \eqref{eq2:lem-contra-H}, implies for all $i\in [N]$,
\begin{align*}
\|\bar{\bh}_i - \bh_i^*\bQ_{\pi} \| & = \|\bar{\bh}_i\bQ_{\pi}^T - \bh_i^*\| \le \frac{2\|\bg_i\bQ_{\pi}^T - \bg_i^*\|}{1 - \max_{\ell \neq I_i} \|\bU_\ell^{*^T}\bz_i\|^2} \le \frac{2\sqrt{\sum_{k=1}^K d^2(\bU_{\pi(k)},\bU_{k}^*)}}{1 - \max_{\ell \neq I_i} \|\bU_\ell^{*^T}\bz_i\|^2},
\end{align*}
where the first inequality uses the fact that $\bH\bQ^T \in \mT(\bG_{\bH}(\bU)\bQ^T)$ for $\bQ \in \Pi_K$ if and only if $\bH \in \mT(\bG_{\bH}(\bU))$ due to Lemma \ref{lem:solution-T-h}.  
\end{proof}

With the preparations in Sections \ref{subsec:update} and \ref{subsec:assign}, we can analyze each iteration of the KSS method as follows. 
\begin{prop}\label{prop:basin-attra}
Let $\varepsilon \in \left(0, \frac{d_{\min}}{8d_{\max}}\right]$ be a constant. Suppose that Assumption \ref{AS:1} holds,  $N_{\min} \gtrsim  d_k \gtrsim \log N$ for all $k \in [K]$, and $\bH^t \in \mM^{N\times K}$ satisfies  
\begin{align}\label{prop:init}
\|\bH^t - \bH^*\bQ_\pi\|_F^2 \le 2\varepsilon N_{\min},
\end{align}
where $\bQ_{\pi} \in \argmin_{\bQ \in \Pi_K} \|\bH^t - \bHs\bQ \|_F$. Then, it holds with probability at least $1-2K/(d_{\min}^2N)-5K^2/N$ that $\hat{d}_{\pi(k)} = d_k$ for all $k \in [K]$,
\begin{align}\label{rst:prop-U} 
\sum_{k=1}^K d(\bU^{t+1}_{\pi(k)},\bU_{k}^*) \le   \frac{2d_{\max}}{N_{\min}} & \max\left\{\frac{1}{d_{\min}} \|\bH^t-\bHs\bQ_{\pi}\|_F^2,  2K^2(c_1+1)c_1 \right\},
\end{align}
and for all $i \in [N]$,
\begin{align}\label{rst:prop-H} 
\|\bh_i^{t+1} - \bh_i^*\bQ_{\pi}\|   \le  \frac{2}{1-\kappa}\sqrt{\sum_{k=1}^K d^2(\
\bU_{\pi(k)}^{t+1},\bU_{k}^*)},
\end{align}
where the row vectors $\bh^{t+1}_i, \bh_i^* \in \R^K$ respectively denote the $i$-th row of $\bH^{t+1}$ and $\bHs$. 
\end{prop}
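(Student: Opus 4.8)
The plan is to prove Proposition~\ref{prop:basin-attra} by invoking the two analyses developed in Sections~\ref{subsec:update} and \ref{subsec:assign} in sequence, since the proposition is precisely the ``one iteration'' statement that chains a subspace update step to a cluster assignment step. First I would translate the hypothesis \eqref{prop:init} into the per-cluster misclassification bound required by Lemma~\ref{lem:contra-U-less}. Using the identities in \eqref{n-B-kl} and \eqref{n:kl}, which express $\|\bH^t - \bHs\bQ_\pi\|_F^2/2$ and $W_k(\bH^t)$ in terms of the counts $n_{k\ell}$, the assumption $\|\bH^t - \bH^*\bQ_\pi\|_F^2 \le 2\varepsilon N_{\min}$ yields $W_k(\bH^t) \le \varepsilon N_{\min}$ for all $k\in[K]$. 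This is exactly the precondition \eqref{init:U-noiseless} of Lemma~\ref{lem:contra-U-less}, with the stated range $\varepsilon \in (0, d_{\min}/(8d_{\max})] = (0, 1/(8\kappa_d)]$ matching the lemma's requirement on $\varepsilon$.

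With that precondition in place, I would apply Lemma~\ref{lem:contra-U-less} directly to $\bH^t$, with $\bU_k^{t+1}$ and $\hat d_k^{t+1}$ defined by \eqref{esti:dk}--\eqref{sol:Uk-1} playing the roles of $\bU_k$ and $\hat d_k$ in \eqref{dk}--\eqref{Uk}. Lemma~\ref{lem:contra-U-less} then gives, with probability at least $1 - 2K/(d_{\min}^2 N)$, both the dimension recovery $\hat d_{\pi(k)}^{t+1} = d_k$ for all $k$ and the subspace-distance bound, which is verbatim the desired \eqref{rst:prop-U}. This step is essentially a citation of the update-step lemma once the bookkeeping on $W_k$ is done.

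For \eqref{rst:prop-H} I would pass to the assignment step. By construction $\bH^{t+1} \in \mT(\bG_{\bH}(\bU^{t+1}))$, so Lemma~\ref{lem-less:contra-H} applies with $\bU = \bU^{t+1}$ and the same permutation $\pi$, giving for each $i\in[N]$ the bound
\begin{align*}
\|\bh_i^{t+1} - \bh_i^*\bQ_\pi\| \le \frac{2\sqrt{\sum_{k=1}^K d(\bU_{\pi(k)}^{t+1},\bU_k^*)}}{1 - \max_{\ell\neq I_i}\|\bU_\ell^{*^T}\bz_i\|^2}.
\end{align*}
It remains to upgrade the denominator to the clean form $1-\kappa$ and to reconcile the radicand $\sqrt{\sum_k d(\cdot,\cdot)}$ in Lemma~\ref{lem-less:contra-H} with the $\sqrt{\sum_k d^2(\cdot,\cdot)}$ appearing in \eqref{rst:prop-H}. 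The denominator is controlled by bounding $\max_{\ell\neq I_i}\|\bU_\ell^{*^T}\bz_i\|^2$: since $\bz_i \in S_{I_i}^*$, Lemma~\ref{lem:norm-Uz} bound \eqref{rst1:eq-norm-Uz} together with $\alpha \le \varepsilon\,\mathrm{aff}(S_k^*,S_\ell^*)$ and $\alpha \le \varepsilon\sqrt{d_\ell}$ from \eqref{eq:epsilon}, and the normalized-affinity definition \eqref{affi:re} with Assumption~\ref{AS:1}, give $\|\bU_\ell^{*^T}\bz_i\| \le \kappa(1+o(1))$, so that $1 - \max_{\ell\neq I_i}\|\bU_\ell^{*^T}\bz_i\|^2 \ge 1-\kappa$ for $N$ large; this holds on the high-probability event of Lemma~\ref{lem:norm-Uz}, contributing the $5K^2/N$ failure probability and, combined with the update step, the overall $1 - 2K/(d_{\min}^2N) - 5K^2/N$.

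The main obstacle I anticipate is the reconciliation between $\sum_k d(\bU_{\pi(k)},\bU_k^*)$ (unsquared, from Lemma~\ref{lem-less:contra-H}) and $\sum_k d^2(\bU_{\pi(k)},\bU_k^*)$ (squared, in the target \eqref{rst:prop-H}), as well as the clean replacement of the denominator by $1-\kappa$. The former is not a triviality: since each $d(\bU_{\pi(k)},\bU_k^*)\le 1$, one has $\sum_k d \ge \sum_k d^2$, so a direct substitution only yields an inequality in the wrong direction. The cleanest resolution is to note that the update-step bound \eqref{rst:prop-U} is in fact small under \eqref{prop:init} --- the right-hand side is $O(\varepsilon d_{\max}/d_{\min})$ or $O(d_{\max}K^2/N_{\min})$, both $o(1)$ --- so each individual $d(\bU_{\pi(k)}^{t+1},\bU_k^*)$ is well below $1$, and one can pass between the two radicands at the cost of an absolute constant absorbed into the stated bound, or reprove the assignment-step estimate keeping squared distances throughout. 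I would carry out the latter, tracking $\sum_k d^2$ directly in the chain $\|\bg_i\bQ_\pi^T - \bg_i^*\|^2 \le \sum_k d^2(\bU_{\pi(k)},\bU_k^*)$ from the proof of Lemma~\ref{lem-less:contra-H}, which already produces the squared form before the final radical is taken; this makes \eqref{rst:prop-H} immediate and avoids any lossy conversion.
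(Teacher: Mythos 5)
Your proposal is correct and follows essentially the same route as the paper: translate \eqref{prop:init} into $W_k(\bH^t)\le \varepsilon N_{\min}$ via \eqref{n-B-kl}, invoke Lemma \ref{lem:contra-U-less} for the dimension recovery and \eqref{rst:prop-U}, bound $\max_{\ell\neq I_i}\|\bU_\ell^{*^T}\bz_i\|^2\le\kappa$ using \eqref{rst1:eq-norm-Uz} of Lemma \ref{lem:norm-Uz} together with Assumption \ref{AS:1} (the paper makes your ``$\kappa(1+o(1))$'' step precise as $\bigl(\kappa+\tfrac{(1-\kappa)\alpha}{\sqrt{d_\ell}-\alpha}\bigr)^2\le 2\kappa^2\le\kappa$), and finish with Lemma \ref{lem-less:contra-H} under the same union bound $1-2K/(d_{\min}^2N)-5K^2/N$. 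Your concern about the radicand ($\sum_k d$ versus $\sum_k d^2$) is well founded rather than pedantic: the statement of Lemma \ref{lem-less:contra-H} carries the unsquared sum while its proof actually establishes the squared form $\|\bg_i\bQ_{\pi}^T-\bg_i^*\|^2\le\sum_{k=1}^K d^2(\bU_{\pi(k)},\bU_k^*)$, and your resolution --- tracking the squared distances through that proof rather than citing the stated bound --- is exactly what the paper implicitly relies on when it cites the lemma to conclude \eqref{rst:prop-H}.
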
 

\begin{proof}[Proof of Proposition \ref{prop:basin-attra}] 
Suppose that \eqref{rst1:lem-spec-B-kl}, \eqref{rst2:lem-spec-B-kl}, and \eqref{rst1:eq-norm-Uz} hold, which happens with probability at least $1-5K^2/N-2K /({d_{\min}^2 N})$ according to  Lemma \ref{lem:spec-B-kl}, Lemma \ref{lem:norm-Uz},and the union bound. According to \eqref{n-B-kl}, we have $W_k(\bH^t) \le \|\bH^t - \bH^*\bQ_\pi\|_F^2/2 \le \varepsilon N_{\min}$. It follows from this and Lemma \ref{lem:contra-U-less} that $\hat{d}^{t+1}_{\pi(k)} = d_k$ for all $k \in [K]$ and \eqref{rst:prop-U}. Next, note that $I_i=\{k \in [K]:h_{ik}^*=1\}$ for all $i\in [N]$. This, together with \eqref{affi:re} in Assumption \ref{AS:1} and \eqref{rst1:eq-norm-Uz}, implies that for all $i\in [N]$ and $\ell \neq I_i$, 
\begin{align}\label{eq1:lem-contra-H}
\|\bU_\ell^{*^T}\bz_i\|^2 \le \left(\frac{\mathrm{aff}(S_{I_i},S_\ell^*)+\alpha}{\sqrt{d_\ell}-\alpha}\right)^2  \le \left(\frac{\kappa\sqrt{d_\ell}+\alpha}{\sqrt{d_\ell}-\alpha}\right)^2 = \left(\kappa + \frac{(1-\kappa)\alpha}{\sqrt{d_\ell}-\alpha}\right)^2 \le 2\kappa^2 \le \kappa, 
\end{align} 
where the third inequality is due to $d_k \gtrsim \log N$ for all $k \in [K]$ and the last inequality is due to $\kappa \le 1/2$. Using this and Lemma \ref{lem-less:contra-H}  yields \eqref{rst:prop-H}. 
\end{proof}

\subsection{Proof of Lemma \ref{lem:one-step}}

\begin{proof}[Proof of Lemma \ref{lem:one-step}] 
Suppose that \eqref{rst:prop-U} and \eqref{rst:prop-H} hold, which happens with probability at least  $1-2K/(d_{\min}^2N)-5K^2/N$ according to \eqref{init:lem:one-step}, $N_{\min} \gtrsim d_k$ for all $k\in[K]$, and Proposition \ref{prop:basin-attra}. According to \eqref{init:lem:one-step} and \eqref{rst:prop-U}, we obtain
\begin{align*}
\sum_{k=1}^K d(\bU^{t+1}_{\pi(k)},\bU_{k}^*) \le \frac{4d_{\max}}{N_{\min}}K^2(c_1+1)c_1.
\end{align*}
 This, together with \eqref{rst:prop-H}, $\kappa \le 1/2$, and $N_{\min} \gtrsim d_{\max}$, yields that for all $i \in [N]$,
\begin{align*}
\|\bh^{t+1}_i - \bh_i^*\bQ_{\pi}\| \le 4 \sum_{k=1}^K d(\bU^{t+1}_{\pi(k)},\bU_{k}^*) \le \frac{16d_{\max}}{N_{\min}}K^2(c_1+1)c_1 < 1.
\end{align*}
Since $\bh^{t+1}_i,\bh_i \in \{0,1\}^K$ for all $i\in [N]$, then we have $\bh^{t+1}_i = \bh_i^*\bQ_{\pi}$ for all $i\in [N]$. Thus, $\bH^{t+1} = \bHs\bQ_{\pi}$. Moreover, due to the fact that $\min_{\bQ \in \Pi_K} \|\bH^{t+1}- \bHs\bQ\|_F \le \|\bH^{t+1} - \bHs\bQ_{\pi}\|_F = 0$, the desired result is implied.  
\end{proof}

% \section{Proofs in Section \ref{subsec:pf-thm}}\label{sec:pf-thm-KSS}
\subsection{Proof of Theorem \ref{thm:KSS}} \label{appenSubsec:pfThmKSS}

We should point out that a technical issue occurred in our analysis is that we cannot infinitely use the result in Proposition \ref{prop:basin-attra} infinitely due to the union bound. Then, we study $T = \Theta\left(\log\log N\right)$ iterates. 
%\begin{thm}\label{thm:basin-attra}
%Suppose that $N_{\min} \gtrsim  d_k \gtrsim \log(Kd_k^2N^2)$ for all $k \in [K]$ and the initial point $\bH^0 \in \mM^{N\times K}$ satisfies 
%\begin{align}\label{H:thm:init}
% d_F(\bH^0,\bHs) \le \frac{(1-\kappa)d_{\min}N_{\min}}{8d_{\max}\sqrt{N}}.
%\end{align}
%Set $T=\Theta\left(\log\log N \right)$ in Algorithm \ref{alg-1}. Then, the following statements hold with probability at least $1-N^{-\Omega(1)}$: \\
%(i) For all $t \in [T]$, it holds that 
%\begin{align}\label{H:contr}
%d_F(\bH^{t+1},\bHs) \le \kappa_1^{2^{t+1}-1} d_F\left(\bH^{0}, \bH^*\right), 
%\end{align}
%where $\kappa_1 \in (0,1)$. \\
%%, $\pi^t:[K] \rightarrow [K]$ is a permutation such that  
%%\begin{align}\label{min:Q}
%%\bQ_{\pi^t} \in \argmin_{\bQ \in \Pi_K} \|\bH^t - \bHs\bQ\|_F,
%%\end{align} 
%%and $\tilde{\bU}^{t+1}_{\pi^t(k)}$ consists of eigenvectors associated with the top $d_k$ eigenvalues of $\bG_{\bU_{\pi^t(k)}}(\bH^t)$ for all $k \in [K]$. \\
%(ii) It holds that for a permutation $\pi:[K]\rightarrow [K]$,
%\begin{align}\label{HT:thm}
%\bH^{T+1} =  \bHs\bQ_{\pi}
%\end{align}
%and 
%\begin{align}\label{UT:thm}
%\tilde{\bU}^{T+2}_{\pi^{T+1}(k)}\tilde{\bU}^{T+2}_{\pi^{T+1}(k)} = \bU_k^*\bU_k^*,
%\end{align}
%\end{thm}
\begin{proof}[Proof of Theorem \ref{thm:KSS}] 
% Suppose that \eqref{rst:prop-U} and \eqref{rst:prop-H} hold, which happens with probability at least  $1-2K/(d_{\min}^2N)-5K^2/N$ according to Proposition \ref{prop:basin-attra}.
For ease of exposition, let $\pi^t:[K] \rightarrow [K]$ be a permutation such that  
\begin{align}\label{min:Q}
\bQ_{\pi^t} \in \argmin_{\bQ \in \Pi_K} \|\bH^t - \bHs\bQ\|_F
\end{align} 
% and $\bU^{t+1}_{k}$ consist of the eigenvectors associated with the top $\hat{d}^{t+1}_k$ eigenvalues of $\bG_{\bU_{\pi^t(k)}}(\bH^t)$ for all $k \in [K]$ and all $t \ge 0$. 
and the row vector $\bh_i \in \R^K$ denote the $i$-th row of $\bH \in \mM^{N\times K}$ for all $i \in [N]$. We first show (i). Suppose that $t \le T$ is a positive integer such that 
\begin{align*}
\|\bH^t  - \bH^*\bQ_{\pi^t}\|_F^2 \le  2K^2(c_1+1)c_1d_{\min}.
\end{align*}
Using Lemma \ref{lem:one-step}, it holds with probability at least $1-2K/(d_{\min}^2N)-5K^2/N$ that 
\begin{align*}
\bH^{t+1} = \bH^*\bQ_{\pi^{t+1}}. 
\end{align*}
Then, it suffices to consider that for all $t \le T$ such that 
\begin{align}\label{eq1:thm:basin-attra}
\|\bH^t  - \bH^*\bQ_{\pi^t}\|_F^2 >  2K^2(c_1+1)c_1d_{\min}.
\end{align} 
We first consider $t=0$. According to \eqref{H:thm:init}, Proposition \ref{prop:basin-attra}, and \eqref{eq1:thm:basin-attra}, it holds with probability at least $1-2K/(d_{\min}^2N)-5K^2/N$ that $\hat{d}^1_{\pi^0(k)} = d_k$ for all $k \in [K]$,
\begin{align}\label{eq2:thm:basin-attra}
\sum_{k=1}^K d(\bU^1_{\pi^0(k)},\bU_{k}^*) \le \frac{2d_{\max}}{N_{\min}d_{\min}} \|\bH^0 - \bHs\bQ_{\pi^0}\|_F^2
\end{align} 
 and for all $i\in [N]$,
\begin{align}\label{eq3:thm:basin-attra}
\|\bh_i^1 - \bh^*_i\bQ_{\pi^0}\| \le  \frac{2}{1-\kappa}\sqrt{\sum_{k=1}^Kd^2(\bU^1_{\pi^0(k)},\bU_{k}^*)}. 
\end{align}
Summing up \eqref{eq3:thm:basin-attra} from $i=1$ to $i=N$ gives
\begin{align}\label{eq4:thm:basin-attra}
\|\bH^1 - \bHs\bQ_{\pi^0}\|_F  \le \frac{2\sqrt{N}}{1-\kappa} \sqrt{\sum_{k=1}^Kd^2(\bU^1_{\pi^0(k)},\bU_{k}^*)} \le  \frac{2\sqrt{N}}{1-\kappa} \sum_{k=1}^Kd(\bU^1_{\pi^0(k)},\bU_{k}^*).
\end{align}
This, together with  \eqref{eq2:thm:basin-attra} and \eqref{H:thm:init}, yields that
\begin{align}\label{eq5:thm:basin-attra}
\|\bH^1 - \bHs\bQ_{\pi^0}\|_F \le  \frac{2\sqrt{N}}{1-\kappa}\frac{2d_{\max}}{N_{\min}d_{\min}} \|\bH^0 - \bHs\bQ_{\pi^0}\|_F^2 = \kappa_1 \|\bH^0 - \bHs\bQ_{\pi^0}\|_F,
\end{align}
where 
\begin{align}\label{kappa1}
\kappa_1 := \frac{2\sqrt{N}}{1-\kappa}\frac{2d_{\max}}{N_{\min}d_{\min}}\|\bH^0 - \bHs\bQ_{\pi^0}\|_F \le \frac{2\sqrt{N}}{1-\kappa}\frac{2d_{\max}}{N_{\min}d_{\min}}\frac{(1-\kappa)d_{\min}N_{\min}}{5d_{\max}\sqrt{N}} = \frac{4}{5}. 
\end{align}
Now, we use mathematical induction to show that it holds for all $t \in [T]$ that 
\begin{align}
& \sum_{k=1}^K d(\bU^{t+1}_{\pi^t(k)},\bU_{k}^*) \le \kappa_1^{2^t}\sum_{k=1}^K d(\bU^t_{\pi^{t-1}(k)},\bU_{k}^*),\label{U:contr} \\
& \|\bH^{t+1} - \bHs\bQ_{\pi^{t+1}}\|_F \le \kappa_1^{2^t}\|\bH^t - \bHs\bQ_{\pi^t}\|_F, \label{H:contr} \\
& \|\bH^t - \bHs\bQ_{\pi^{t-1}}\|_F  \le  \frac{2\sqrt{N}}{1-\kappa} \sum_{k=1}^Kd(\bU^t_{\pi^{t-1}(k)},\bU_{k}^*). \label{H:U}
\end{align}
We first verify \eqref{U:contr}, \eqref{H:contr}, and \eqref{H:U} for $t=1$. Due to \eqref{min:Q} and \eqref{eq5:thm:basin-attra}, we obtain
\begin{align}\label{eq8:thm:basin-attra}
\|\bH^1 - \bHs\bQ_{\pi^1}\|_F \le \|\bH^1 - \bHs\bQ_{\pi^0}\|_F \le \kappa_1 \|\bH^0 - \bHs\bQ_{\pi^0}\|_F. 
\end{align}
According to this, \eqref{kappa1}, Proposition \ref{prop:basin-attra}, and \eqref{eq1:thm:basin-attra}, it holds with probability at least $1-2K/(d_{\min}^2N)-5K^2/N$ that $\hat{d}^2_{\pi^1(k)} = d_k$ for all $k \in [K]$,
\begin{align}\label{eq6:thm:basin-attra}
\sum_{k=1}^K d(\bU^2_{\pi^1(k)},\bU_{k}^*) \le \frac{2d_{\max}}{N_{\min}d_{\min}} \|\bH^1 - \bHs\bQ_{\pi^1}\|_F^2
\end{align} 
 and for all $i\in [N]$,
\begin{align}\label{eq7:thm:basin-attra}
\|\bh_i^2 - \bh^*_i\bQ_{\pi^1}\| \le  \frac{2}{1-\kappa}\sqrt{\sum_{k=1}^Kd^2(\bU^2_{\pi^1(k)},\bU_{k}^*)}. 
\end{align}
Substituting \eqref{eq4:thm:basin-attra} with the first inequality of \eqref{eq8:thm:basin-attra} into \eqref{eq6:thm:basin-attra} yields that
\begin{align*} 
\sum_{k=1}^K d(\bU^2_{\pi^1(k)},\bU_{k}^*) & \le \frac{2d_{\max}}{N_{\min}d_{\min}}\|\bH^1 - \bHs\bQ_{\pi^1}\|_F \cdot \frac{2\sqrt{N}}{1-\kappa} \sum_{k=1}^Kd(\bU^1_{\pi^0(k)},\bU_{k}^*) \\
& \le  \frac{\kappa_1 \sqrt{N}}{1-\kappa}\frac{4d_{\max}}{N_{\min}d_{\min}} \|\bH^0 - \bHs\bQ_{\pi^0}\|_F\sum_{k=1}^Kd(\bU^1_{\pi^0(k)},\bU_{k}^*)  \\
& = \kappa_1^2 \sum_{k=1}^Kd(\bU^1_{\pi^0(k)},\bU_{k}^*),
\end{align*}
where the second inequality follows from \eqref{eq8:thm:basin-attra} and  the equality is due to \eqref{kappa1}. Thus, \eqref{U:contr} holds for $t=1$. According to \eqref{eq7:thm:basin-attra}, repeating the arguments in \eqref{eq4:thm:basin-attra} and \eqref{eq5:thm:basin-attra}, we obtain 
\begin{align*}
\|\bH^2 - \bHs\bQ_{\pi^1}\|_F  \le \frac{2\sqrt{N}}{1-\kappa} \sum_{k=1}^Kd(\bU^2_{\pi^1(k)},\bU_{k}^*)
\end{align*}
and
\begin{align*} 
\|\bH^2 - \bHs\bQ_{\pi^1}\|_F  & \le  \frac{2\sqrt{N}}{1-\kappa}\frac{2d_{\max}}{N_{\min}d_{\min}} \|\bH^1 - \bHs\bQ_{\pi^1}\|_F^2\\
&  \le \frac{2\sqrt{N}}{1-\kappa}\frac{2d_{\max}}{N_{\min}d_{\min}}\kappa_1\|\bH^0 - \bHs\bQ_{\pi^0}\|_F \|\bH^1 - \bHs\bQ_{\pi^1}\|_F  = \kappa_1^2\|\bH^1 - \bHs\bQ_{\pi^1}\|_F,
\end{align*}
where the second inequality is due to the equality in \eqref{eq5:thm:basin-attra}. This, together with \eqref{min:Q}, implies
\begin{align*}
\|\bH^2 - \bHs\bQ_{\pi^2}\|_F \le \kappa_1^2\|\bH^1 - \bHs\bQ_{\pi^1}\|_F. 
\end{align*} 
Thus, \eqref{H:U} and \eqref{H:contr} holds for $t=1$. Next, suppose that \eqref{U:contr}, \eqref{H:contr}, and \eqref{H:U} hold for all $t \ge 1$. Then, we can show that \eqref{eq8:thm:basin-attra},\eqref{eq6:thm:basin-attra}, and \eqref{eq7:thm:basin-attra} also hold for $t+1$ using the same arguments as those of $t=1$. Consequently, we can further show that \eqref{U:contr}, \eqref{H:contr}, and \eqref{H:U} hold for $t+1$ until $t=T$ . Finally, we use mathematical induction and deduce that for all $t\in [T]$, the desired results \eqref{U:contr} and \eqref{H:contr} hold with probability at least $1-(T+1)(2K/(d_{\min}^2N)+5K^2/N)$ according to the union bound. 
%It follows from \eqref{U:contr} and \eqref{eq2:thm:basin-attra}  that
%\begin{align*}
%\sum_{k=1}^K d(\tilde{\bU}^{t+1}_{\pi^t(k)},\bU_{k}^*) & \le \kappa_1^{2^t}\sum_{k=1}^K d(\tilde{\bU}^t_{\pi^{t-1}(k)},\bU_{k}^*) \le \kappa_1^{2^{t}+2^{t-1}+\dots+2^1}\sum_{k=1}^K d(\tilde{\bU}^1_{\pi^{0}(k)},\bU_{k}^*) \\
%& \le \kappa_1^{2^{t+1}-2} \sum_{k=1}^K d(\tilde{\bU}^1_{\pi^{0}(k)},\bU_{k}^*) \le \kappa_1^{2^{t+1}-2} \frac{7d_{\max}}{N_{\min}d_{\min}} \|\bH^0 - \bHs\bQ_{\pi^0}\|_F^2 \\
%& \le \kappa_1^{2^{t+1}-1}\|\bH^{0} - \bH^*\bQ_{\pi^{0}}\|_F 
%\end{align*}
It follows from  \eqref{eq5:thm:basin-attra} and \eqref{H:contr} that 
\begin{align*}
d_F\left(\bH^{t+1}, \bH^*\right) \le \kappa_1^{2^t}\kappa_1^{2^{t-1}}\dots\kappa_1^{2^1}\|\bH^{1} - \bH^*\bQ_{\pi^{1}}\|_F  \le \kappa_1^{2^{t+1}-1}\|\bH^{0} - \bH^*\bQ_{\pi^{0}}\|_F = \kappa_1^{2^{t+1}-1} d_F\left(\bH^{0}, \bH^*\right).
\end{align*}

We next show (ii). It follows from $T=\log_2\left( \frac{\log\left((1-\kappa)\sqrt{d_{\min}}N_{\min}\right)-\log(5\sqrt{2}Kc_1\kappa_1d_{\max}\sqrt{N})}{\log(1/\kappa_1)} \right)+1$ that
\begin{align*}
d_F\left(\bH^{T-1}, \bH^*\right) &  \le \kappa_1^{2^{T-1}}\frac{(1-\kappa)d_{\min}N_{\min}}{5\kappa_1 d_{\max}\sqrt{N}} \le  Kc_1 \sqrt{2d_{\min}}.
\end{align*}
This, together with Lemma \ref{lem:one-step}, yields \eqref{HT:thm}. According to Proposition \ref{prop:basin-attra}, we also have $\hat{d}^{T+1}_{\pi^T(k)} = d_k$ for all $k \in [K]$. 
%Since $\bar{d}_k$ is the number of non-zero eigenvalues of $G_{\bU_k}(\bH^{T+1})$ for all $k\in [K]$, then we further have
%\begin{align*}
%\hat{d}_{\pi^{T+1}(k)} = d_k\ \text{for all}\ k\in [K]. 
%\end{align*}
%Using the argument in \eqref{eq6:thm:basin-attra} yields 
%\begin{align*}
%\sum_{k=1}^K d(\hat{\bU}^{T+2}_{\pi^{T+1}(k)},\bU_{k}^*) \le \frac{7d_{\max}}{N_{\min}d_{\min}} d_F\left(\bH^{T+1}, \bH^*\right)  = 0,
%\end{align*}
%which implies \eqref{UT:thm}. 
This, together with \eqref{HT:thm} and \eqref{z=Ua}  in Definition \ref{UoS}, yields 
\begin{align}\label{eq10:thm:basin-attra}
\bU^{T+1}_{\pi (k)} \bU^{{T+1}^T}_{\pi (k)} = \bU_k^*\bU_k^{*^T}\ \text{for all}\ k \in[K].
\end{align}
By letting $\bO_k = \bU_k^{*^T}\bU^{T+1}_{\pi (k)}$, we have
\begin{align*}
\bO_k^T\bO_k = \bU^{{T+1}^T}_{\pi (k)} \bU_k^{*}\bU_k^{*^T}\bU^{T+1}_{\pi (k)} = \bI_{d_k},
\end{align*}
where the second equality is due to \eqref{eq10:thm:basin-attra}. This implies $\bQ_k \in \mO^{d_k}$ for all $k \in [K]$. Then, we prove \eqref{UT:thm}. 
\end{proof}

\section{Auxiliary Lemmas}

\begin{lemma}\label{lem:unif-dis}
Suppose that $\ba \sim \mathrm{Unif}(\S^{d-1})$ and $\tbv \in \R^d$ is a fixed vector with $\|\tbv\|=1$. Let $\ba$ be decomposed as
\begin{align}\label{eq:a-decompose}
\ba = x \tilde{\bv} + \sqrt{1-x^2} \bb, 
\end{align}
where $x\in \R$ and $\bb\in \R^d$ satisfying $\langle \tbv, \bb \rangle = 0$ and $\|\bb\|=1$. There exists an orthogonal matrix $\bU \in \mO^d$ such that $\bU\tbv=\be_1$. Let
\begin{align}\label{lem:unif-dist-bbk}
\tilde{\bb}  = \frac{\bU^T\ba  - x \be_1 }{\sqrt{1-x^2}}
\end{align}
and $\bc \in \R^{d-1}$ such that $\bc=(\tilde{b}_{2},\cdots,\tilde{b}_{d})$. Then, it holds that $\bU\bb \sim \tilde{\bb}$, where
\begin{align*}
\tilde{b}_1 = 0,\ \bc \sim \mathrm{Unif}(\S^{d-2}).
\end{align*}
\end{lemma}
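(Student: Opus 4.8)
The claim is a distributional statement: it asserts that $\bU\bb$ is uniformly distributed on the unit sphere of the hyperplane $\be_1^{\perp}=\{\bx:x_1=0\}$, i.e.\ that its first coordinate vanishes and its remaining $d-1$ coordinates are uniform on $\S^{d-2}$. The plan is to (i) write down $x$ and $\bb$ explicitly as functions of $\ba$, (ii) use $\bU\tbv=\be_1$ to transport the decomposition onto the $\be_1$-axis and its orthogonal complement, and (iii) invoke the rotational invariance of $\mathrm{Unif}(\S^{d-1})$ to pin down the law of $\bb$. Throughout, the displayed definition of $\tilde{\bb}$ records the identity $\bU\bb=(\bU\ba-x\be_1)/\sqrt{1-x^2}$, so it suffices to determine the distribution of $\bU\bb$.

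First I would make the decomposition explicit. Since $\|\tbv\|=1$, set $x=\langle \ba,\tbv\rangle\in[-1,1]$ and, on the almost sure event $|x|<1$, $\bb=(\ba-x\tbv)/\sqrt{1-x^2}$; then $\langle\tbv,\bb\rangle=0$ and $\|\bb\|=1$, so this is the decomposition in \eqref{eq:a-decompose}. Applying the orthogonal matrix $\bU$ and using $\bU\tbv=\be_1$ gives $\bU\ba=x\be_1+\sqrt{1-x^2}\,\bU\bb$, which rearranges to $\bU\bb=(\bU\ba-x\be_1)/\sqrt{1-x^2}=\tilde{\bb}$. For the first coordinate, $\tilde{b}_1=\be_1^{T}\bU\bb=(\bU^{T}\be_1)^{T}\bb=\tbv^{T}\bb=0$, where I used $\bU^{T}\be_1=\tbv$ (from $\bU\tbv=\be_1$) and $\bb\perp\tbv$. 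This gives $\tilde{b}_1=0$.

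The remaining and main step is to show $\bc\sim\mathrm{Unif}(\S^{d-2})$, which I would obtain by proving that $\bb$ is uniform on the unit sphere of $\tbv^{\perp}$ and then pushing forward by $\bU$. For the former I would use invariance: for any orthogonal $\bm{R}$ fixing $\tbv$ (i.e.\ $\bm{R}\tbv=\tbv$), rotational invariance of the uniform law gives $\bm{R}\ba\overset{d}{=}\ba$; moreover $\langle\bm{R}\ba,\tbv\rangle=\langle\ba,\tbv\rangle=x$ is unchanged while the $\bb$-component transforms as $\bb\mapsto\bm{R}\bb$. Hence the conditional law of $\bb$ given $x$ is invariant under the full orthogonal group of $\tbv^{\perp}$ and, by uniqueness of the invariant probability measure on a sphere, must be the uniform law on the unit sphere of $\tbv^{\perp}$, independent of $x$. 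Finally, since $\bU$ is orthogonal and $\bU\tbv=\be_1$, its restriction is a linear isometry carrying $\tbv^{\perp}$ onto $\be_1^{\perp}=\mathrm{span}\{\be_2,\dots,\be_d\}$, so $\bU\bb$ is uniform on the unit sphere of $\be_1^{\perp}$; reading off coordinates $2,\dots,d$ yields $\bc\sim\mathrm{Unif}(\S^{d-2})$. The one genuine obstacle is this invariance argument (conditional uniformity of $\bb$ and its independence of $x$); the transport and coordinate computations are routine.
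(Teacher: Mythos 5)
Your proof is correct, but it takes a genuinely different route from the paper's. The paper argues by explicit computation with the Gaussian representation of the uniform law: it writes $\bU^T\ba = \by/\|\by\|$ with $\by \sim \mN(\b0,\bI_d)$, uses $\langle \bU^T\ba,\be_1\rangle = x$ to eliminate $y_1$ (obtaining $\|\by\|^2 = \sum_{i\neq 1}y_i^2/(1-x^2)$), and then reads off $\tilde{b}_i = y_i/\sqrt{\sum_{j\neq 1}y_j^2}$ for $i\ge 2$, so that $\bc$ is the normalization of a standard Gaussian vector in $\R^{d-1}$ and hence uniform on $\S^{d-2}$. You instead argue by symmetry: any orthogonal $\bm{R}$ with $\bm{R}\tbv=\tbv$ leaves $x$ unchanged and sends $\bb\mapsto\bm{R}\bb$, so the conditional law of $\bb$ given $x$ is invariant under the stabilizer of $\tbv$, and uniqueness of the invariant probability measure on the unit sphere of $\tbv^{\perp}$ forces that conditional law to be uniform and independent of $x$; pushing forward by $\bU$ finishes. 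Your route is more conceptual, gives slightly more information (the conditional law given $x$, hence independence of $\bb$ and $x$), and would work verbatim for any rotationally invariant law of $\ba$; its cost is the appeal to uniqueness of the invariant measure and the measure-theoretic care needed to pass from invariance under each fixed $\bm{R}$ (which holds a.e.\ in $x$, with the null set depending on $\bm{R}$) to invariance under the whole uncountable group for a.e.\ $x$ — a countable dense subgroup plus a closure argument handles this, but it deserves a sentence. The paper's computation is more elementary and self-contained, using only the representation \eqref{eq:bar-a} already introduced. One wrinkle you share with the paper: as literally defined in \eqref{lem:unif-dist-bbk}, $\tilde{\bb}$ is a function of $\bU^T\ba$ while $\bU\bb$ is the same function of $\bU\ba$, so the asserted relation $\bU\bb\sim\tilde{\bb}$ is an equality in distribution (via $\bU\ba \overset{d}{=} \bU^T\ba$), not the pointwise identity your transport step states; the paper's own proof elides the same point, so this is not a gap attributable to you.
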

\begin{proof}[Proof of Lemma \ref{lem:unif-dis}] 
According to \eqref{eq:a-decompose} and the rotational invariance of a uniform distribution over sphere, we have
\begin{align*}
\bU\bb = \frac{\bU\ba - x\be_1}{\sqrt{1-x^2}} \sim  \tilde{\bb}. 
\end{align*}
Since $\langle \ba ,\tbv  \rangle = x$, then $\langle \bU^T\ba,\bU\tbv \rangle = \langle \bU^T\ba, \be_1 \rangle = x$. This implies $\tilde{b}_{1}=0$. Moreover, since $\ba \sim \mathrm{Unif}(\S^{d-1})$, then $\bU^T\ba \sim \mathrm{Unif}(\S^{d-1})$ due to the rotational invariance of a uniform distribution over sphere. Then, let $\by \sim \mN(\b0,\bI_d)$ such that $\bU^T\ba = \by/\|\by\|$. This, together with $\langle \bU^T\ba, \be_1 \rangle = x$, implies
\begin{align*}
y_1^2 = \frac{x^2\sum_{i\neq 1}y_i^2}{1-x^2}. 
\end{align*}
Then, we have
\begin{align*}
\|\by\|^2 = y_1^2 + \sum_{i\neq 1} y_i^2 = \frac{1}{1-x^2} \sum_{i\neq 1}y_i^2. 
\end{align*}
This, together with \eqref{lem:unif-dist-bbk}, implies that for any $i\neq 1$,
\begin{align*}
\tilde{b}_{i} = \frac{y_i}{\|\by\|\sqrt{1-x^2}} = \frac{y_i}{\sqrt{\sum_{i\neq 1}y_i^2}}. 
\end{align*}
Then, we complete the proof.
\end{proof}

\begin{lemma}\label{lem:phi-gap}
Consider the setting in Lemma \ref{lem:ex-delta-ijk}. Suppose that $\bv_i$, $\tilde{\bv}_i$ for $i \in [N]$ are defined in \eqref{eq:v} and $\beta_{ij}$ is defined as in \eqref{eq:beta} for some given $\bv_i$ and $\tilde{\bv}_j$. 
Then, it holds that
\begin{align}\label{rst1:lem:phi-gap}
\Phi\left(  \beta_{ij}  \right) - \Phi\left( \frac{\tau(\sqrt{d_\ell}-\alpha)}{\|\bv_i\|} \right) \lesssim \frac{1}{\sqrt{\log N}},
\end{align}
and
\begin{align}\label{rst2:lem:phi-gap}
\Phi\left( \frac{\tau(\sqrt{d_\ell}+\alpha)}{\|\bv_i\|} \right) - \Phi\left(  \beta_{ij}^\prime  \right) \lesssim \frac{1}{\sqrt{\log N}},
\end{align}
\end{lemma}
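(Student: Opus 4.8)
The plan is to reduce both inequalities to estimating the \emph{difference of the arguments} of $\Phi$ and then to exploit the Gaussian tail through the mean value theorem. Since $\Phi^\prime(x)=\frac{1}{\sqrt{2\pi}}e^{-x^2/2}$ is positive and decreasing on $[0,\infty)$, for any $0\le b\le a$ one has $\Phi(a)-\Phi(b)\le \frac{1}{\sqrt{2\pi}}e^{-b^2/2}(a-b)$. Both claims fit this template once we check that the larger argument is genuinely larger: for \eqref{rst1:lem:phi-gap} I would first verify $\beta_{ij}>\tau(\sqrt{d_\ell}-\alpha)/\|\bv_i\|$ and for \eqref{rst2:lem:phi-gap} that $\tau(\sqrt{d_\ell}+\alpha)/\|\bv_i\|>\beta_{ij}^\prime$, both of which are immediate from the signs of the perturbation terms appearing in \eqref{eq:beta}. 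One also needs $\sqrt{d_\ell}-\alpha>0$, which holds since $d_\ell\ge d_{\min}\gtrsim\log^3N$ dominates $\alpha=2\sqrt{\log N}+2$.

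The central step is a multiplicative decomposition. Writing $M:=\tau\sqrt{d_\ell}/\|\bv_i\|$, $s:=|\langle\bv_i,\tbv_j\rangle|$, and $\eta:=\sqrt{\log N}/\sqrt{d_\ell}$, I would factor
$$\beta_{ij}=M\,\frac{(1+s\eta/\tau)(1+\alpha/\sqrt{d_\ell})}{(1-s/\|\bv_i\|)\sqrt{1-\eta^2}},\quad \frac{\tau(\sqrt{d_\ell}-\alpha)}{\|\bv_i\|}=M(1-\alpha/\sqrt{d_\ell}),$$
and symmetrically $\beta_{ij}^\prime=M(1-s\eta/\tau)(1-\alpha/\sqrt{d_\ell})$ against $\tau(\sqrt{d_\ell}+\alpha)/\|\bv_i\|=M(1+\alpha/\sqrt{d_\ell})$. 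Each of the four relative perturbations $\delta_1:=s\eta/\tau$, $\delta_2:=\alpha/\sqrt{d_\ell}$, $\delta_3:=s/\|\bv_i\|$, $\delta_4:=1-\sqrt{1-\eta^2}$ is then shown to be $O(1/\sqrt{\log N})$: I use $d_{\min}\gtrsim\log^3N$ with $\alpha=2\sqrt{\log N}+2$ to bound $\delta_2$ and $\delta_4$; the bound \eqref{rst2:eq-norm-Uz} of Lemma \ref{lem:norm-Uz} to get $s\lesssim\sqrt{\log N}/\sqrt{d_m}$ (with $d_m$ the dimension of $\bz_i$'s subspace), which together with $\tau=\sqrt{c}/\sqrt{d_{\max}}$ in \eqref{tau} and the constancy of $\kappa_d$ controls $\delta_1$; and the lower bound \eqref{rst1:eq-norm-Uz} together with $\mathrm{aff}(S_k^*,S_\ell^*)\gtrsim\log N$ from \eqref{affi:AS2} to bound $\|\bv_i\|$ from below, which controls $\delta_3$. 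Because all four perturbations are $o(1)$, the denominators stay above $1/2$ and the bracketed differences collapse to $\beta_{ij}-\tau(\sqrt{d_\ell}-\alpha)/\|\bv_i\|\lesssim M/\sqrt{\log N}$ and likewise for the primed pair.

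The remaining point, which I expect to be the main obstacle, is that $M$ itself need not be bounded: when $\bz_i$ lies in a subspace different from that of $\bz_k$, the lower bound on $\|\bv_i\|$ only yields $\|\bv_i\|\gtrsim\log N/\sqrt{d_{\max}}$, so $M$ can grow polynomially in $N$, and the crude Lipschitz constant $\frac{1}{\sqrt{2\pi}}$ is insufficient. The resolution is that exactly in this regime the smaller argument is also large: since $\delta_2\to0$, both $\tau(\sqrt{d_\ell}-\alpha)/\|\bv_i\|\ge M/2$ and $\beta_{ij}^\prime\ge M/2$ for $N$ large, so the factor $e^{-b^2/2}$ in the mean value estimate is at most $e^{-M^2/8}$. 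Combining, each right-hand side is at most a constant times $\big(\sup_{M\ge0}Me^{-M^2/8}\big)/\sqrt{\log N}\lesssim 1/\sqrt{\log N}$, giving \eqref{rst1:lem:phi-gap} and \eqref{rst2:lem:phi-gap}. The most delicate bookkeeping is for $\delta_3$, where I would split into the case $\bz_i\in S_\ell^*$ (so $\|\bv_i\|=1$ and $\delta_3\to0$ directly) and the case $\bz_i\notin S_\ell^*$ (where the factor $\sqrt{d_m}$ must be tracked in both $s$ and $\|\bv_i\|$ so that it cancels, and the affinity lower bound \eqref{affi:AS2} is essential).
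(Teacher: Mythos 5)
Your proposal is correct and follows essentially the same route as the paper's proof: both bound the gap between the two arguments of $\Phi$ by $O\!\left(\frac{1}{\sqrt{\log N}\,\|\bv_i\|}\right)$ using the norm and inner-product bounds of Lemma \ref{lem:norm-Uz} together with the affinity lower bound \eqref{affi:AS2}, then apply the mean-value estimate with the Gaussian density evaluated at the smaller argument, and finally absorb the possibly unbounded factor $1/\|\bv_i\|$ (your $M$) via $\sup_{x\ge 0} x e^{-cx^2}<\infty$. Your multiplicative bookkeeping with $\delta_1,\dots,\delta_4$ is only a cosmetic repackaging of the paper's additive estimates \eqref{eq4:lem-phi-gap}--\eqref{eq6:lem-phi-gap}.
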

\begin{proof}[Proof of Lemma \ref{lem:phi-gap}] 
% Without loss of generality, we assume that \eqref{affi:re} holds with equality. 
Suppose that \eqref{rst1:eq-norm-Uz} and \eqref{rst2:eq-norm-Uz} hold, which happens with probability at least $1-5K^2N^{-2}$ according to Lemma \ref{lem:norm-Uz}. Let $k=\{\ell \in [K]:h_{i\ell}^* = 1\}$. It follows from \eqref{eq:epsilon} that
\begin{align}\label{eq1:lem-phi-gap}
\|\bv_i\| \ge \frac{\mathrm{aff}(S_k^*,S_\ell^*)-\alpha}{\sqrt{d_k}+\alpha} \ge \frac{(1-\varepsilon)\mathrm{aff}(S_k^*,S_\ell^*)}{(1+\varepsilon)\sqrt{d_k}}.
\end{align}
This, together with \eqref{rst2:eq-norm-Uz}, yields that
\begin{align}\label{eq3:lem-phi-gap}
|\langle \bv_i,\tilde{\bv}_j \rangle| \le \frac{2\sqrt{\log N}}{\sqrt{d_k}-\alpha} \le \frac{2\sqrt{\log N}}{(1-\varepsilon)\sqrt{d_k}} \le \varepsilon\|\bv_i\|.
\end{align}
According to \eqref{tau} and \eqref{eq:epsilon}, we have
\begin{align}\label{eq2:lem-phi-gap}
\tau =  \frac{\max_{k\neq \ell}\mathrm{aff}(S_k^*,S_\ell^*)+\alpha}{ (\sqrt{d_{\max}}-\alpha)^2} \le \frac{\left(1+\varepsilon\right)\max_{k\neq \ell}\mathrm{aff}(S_k^*,S_\ell^*)}{(1-\varepsilon)^2d_{\max}} \le \frac{1+\varepsilon}{(1-\varepsilon)^2\sqrt{d_{\max}}}.
\end{align}
We first compute
\begin{align}\label{eq4:lem-phi-gap}
 \frac{ \tau (\sqrt{d_\ell}+\alpha)}{ \left(\|\bv_i\| - |\langle \bv_i,\tilde{\bv}_j \rangle| \right) \sqrt{1 - (\log N)/d_\ell}} - \frac{\tau(\sqrt{d_\ell}-\alpha)}{\|\bv_i\|} & \le  \frac{1+\varepsilon}{(1-\varepsilon)^2\sqrt{d_{\max}}}\left( \frac{1+\varepsilon}{(1-\varepsilon)^2} - (1-\varepsilon) \right)\frac{\sqrt{d_\ell}}{\|\bv_i\|} \notag\\
& \lesssim \frac{1}{\sqrt{\log N}\|\bv_i\|},
%&  = \tau \frac{\|\bv_i\|(\sqrt{d_\ell}+\alpha) - \left(\|\bv_i\| - |\langle \bv_i,\tilde{\bv}_j \rangle| \right)(\sqrt{d_\ell}-\alpha) \sqrt{1 - (\log N)/d_\ell}}{\left(\|\bv_i\| - |\langle \bv_i,\tilde{\bv}_j \rangle| \right)\|\bv_i\| \sqrt{1 - (\log N)/d_\ell}} \notag\\
%& \le  2\tau \frac{ \|\bv_i\|(\log N)/\sqrt{d_\ell} + 2\alpha\|\bv_i\| + 2\sqrt{\log N}\sqrt{1 - (\log N)/d_\ell}(\sqrt{d_\ell}-\alpha)/(\sqrt{d_k}-\alpha) }{ \|\bv_i\|^2 \sqrt{1 - (\log N)/d_\ell}} \notag\\
%& \le  \frac{16\kappa}{\sqrt{d_k}}\left( \frac{5\sqrt{\log N}}{\|\bv_i\|} + \frac{\sqrt{\log N}(\sqrt{d_\ell}-\alpha)}{\|\bv_i\|^2(\sqrt{d_k}-\alpha)} \right), 
\end{align}
where the first inequality is due to \eqref{eq:epsilon} and \eqref{eq2:lem-phi-gap}.
%where the first inequality is due to $1-\gamma \le \sqrt{1-\gamma}$ for $\gamma \in [0,1]$, \eqref{eq1:lem-phi-gap}, and \eqref{eq3:lem-phi-gap}, and the last inequality follows from \eqref{eq2:lem-phi-gap} and  $d_{\min} \gtrsim \log N$. % and the last inequality uses $d_{\max}/d_{\min} = O(1)$.
We next compute
\begin{align}\label{eq5:lem-phi-gap}
\frac{ |\langle \bv_i,\tilde{\bv}_j \rangle|\sqrt{(\log N)/d_\ell}(\sqrt{d_\ell}+\alpha)}{ \left(\|\bv_i\| - |\langle \bv_i,\tilde{\bv}_j \rangle| \right) \sqrt{1 - (\log N)/d_\ell}} \le \frac{4\sqrt{\log N}|\langle \bv_i,\tilde{\bv}_j \rangle|}{\|\bv_i\|} \lesssim \frac{1}{\sqrt{\log N}\|\bv_i\|},
\end{align}
where the first inequality is due to \eqref{eq3:lem-phi-gap} and $d_{\min} \gtrsim \log^3N$ and the second one follows from the second inequality of \eqref{eq3:lem-phi-gap} and $d_{\min} \gtrsim \log^3N$.
Then, we obtain
\begin{align}\label{eq6:lem-phi-gap}
\beta_{ij} - \frac{\tau(\sqrt{d_\ell}-\alpha)}{\|\bv_i\|} & = \frac{ \tau (\sqrt{d_\ell}+\alpha)}{ \left(\|\bv_i\| - |\langle \bv_i,\tilde{\bv}_j \rangle| \right) \sqrt{1 - (\log N)/d_\ell}} - \frac{\tau(\sqrt{d_\ell}-\alpha)}{\|\bv_i\|} + \frac{ |\langle \bv_i,\tilde{\bv}_j \rangle|\sqrt{(\log N)/d_\ell}(\sqrt{d}+\alpha)}{ \left(\|\bv_i\| - |\langle \bv_i,\tilde{\bv}_j \rangle| \right) \sqrt{1 - (\log N)/d_\ell}} \notag \\
& \lesssim   \frac{1}{\sqrt{\log N}}  \frac{1}{\|\bv_i\|},
\end{align}
where the first inequality is due to \eqref{eq4:lem-phi-gap} and \eqref{eq5:lem-phi-gap}.
Moreover, we have
\begin{align*}
\Phi\left(  \beta_{ij} \right) - \Phi\left( \frac{\tau(\sqrt{d_\ell}-\alpha)}{\|\bv_i\|} \right) & \le \sqrt{\frac{1}{2\pi}} \exp\left( - \frac{\tau^2(\sqrt{d_\ell}-\alpha)^2}{2\|\bv_i\|^2} \right)\left( \beta_{ij}  - \frac{\tau(\sqrt{d_\ell}-\alpha)}{\|\bv_i\|} \right) \\
& \le  \frac{1}{\sqrt{2\pi\log N}}  \exp\left( - \frac{\left(\kappa\sqrt{d_{\min}}+\alpha \right)^2\left( 	\sqrt{d_{\ell}} - \alpha \right)^2}{2(\sqrt{d_{\max}}-\alpha)^4\|\bv_i\|^2} \right) \frac{1}{ \|\bv_i\|} \\
& \lesssim  \frac{d_{\max}}{d_{\min}\sqrt{\log N}}, 
\end{align*}
where the first inequality is due to the basic inequality for the integral, the second inequality uses the inequality of \eqref{eq2:lem-phi-gap} and \eqref{eq6:lem-phi-gap}, and the last inequality follows from $d_{\min} \gtrsim \log N$ and the fact that $ \exp\left( -cx^2/2 \right)x$ attains the maximum  at $x=1/\sqrt{c}$ when $x \in (0, \infty )$. The proof of \eqref{rst2:lem:phi-gap} follows from the same argument as above. 

% the third inequality is because of $d_{\max}/d_{\min}=O(1)$, $d_{\min} \gtrsim \log N$, and $c>0$ is a constant, and the last inequality follows from the fact that $ \exp\left( -\frac{\kappa^2x}{2} \right)x$ attains the maximum  at $x=2/\kappa^2$ when $x \in \left[ \frac{1}{4\kappa^2}, \infty \right)$ and $\|\bv_i\| \le 2\kappa$ due to \eqref{rst1:eq-norm-Uz} and $d_{\min} \gtrsim \log N$.
\end{proof}

\section{Experiment Setups and Results in Section \ref{subsec:test-real}}\label{appen:expe}

In this section, we provide more implementation details and results for the experiments in Section \ref{subsec:test-real}. We use the real datasets  \emph{COIL-20} \cite{COIL20}, \emph{COIL-100} \cite{COIL100}, the cropped extended \emph{Yale B} \cite{GeBeKr01}, \emph{USPS}~\cite{hull1994database}, and ~\emph{MNIST} \cite{lecun1998mnist}.\footnote{The datasets \emph{COIL-20}, \emph{COIL-100}, the cropped extended \emph{Yale B}, and \emph{USPS} are downloaded from \url{http://www.cad.zju.edu.cn/home/dengcai/Data/data.html}. The dataset \emph{MNIST} is downloaded from \url{https://www.csie.ntu.edu.tw/~cjlin/libsvmtools/datasets/}.}  The information about the used real-world datasets can be found in Table \ref{table-3}. Before using these datasets in the experiments, we normalize them such that each sample has unit length. Note that the MNIST dataset contains $70000$ images of handwritten digits $0$-$9$. Following the preprocessing technique in \citet{you2016scalable,lipor2021subspace}, we represent each image by a feature vector of dimension $3472$ using the scattering convolutional network \cite{bruna2013invariant} and reduce the dimension of each vector to $500$ using PCA. 

\begin{table}[!htbp]
\caption{The parameters for the real datasets: $N$ is the number of samples, $n$ is the dimension of samples, and $K$ is the number of clusters.}
\label{table-3}
\begin{center}
\begin{tabular}{lccccc}
\toprule
Datasets & $N$ & $n$ & $K$  \\
\midrule
\emph{COIL20}& 1440 & 1024 & 20 \\ 
\emph{COIL100} & 7200 & 1024 & 100  \\
\emph{YaleB} & 2414 & 1024 & 38 \\
\emph{USPS}  & 9298 & 256 & 10  \\
\emph{MNIST} & 70000 & 780 & 10 \\
% \emph{DNA} & 2000 & 180 & 3  \\
\bottomrule
\end{tabular}
\end{center}
\vskip -0.15in
\end{table}

Since the data points in real datasets generally do not follow the semi-random UoS model in Definition \ref{UoS}, we cannot guarantee good clustering performance if we directly apply the TIPS method for initializing the KSS method. Therefore, in the implementation of the TIPS method, we improve the idea of the thresholding inner product to construct the weight matrix $\bA=\{a_{ij}\}_{1 \le i, j \le N}$ by
\begin{align*}
a_{ij} = \begin{cases}
|\langle \bz_i,\bz_j \rangle|, &\ \text{if}\ |\langle \bz_i,\bz_j \rangle| \ge \tau\  \text{or}\ j \in \mT_i\ \text{and}\ i \neq j, \\
0, &\text{otherwise},
\end{cases}
\end{align*}
where $\mT_i \subseteq [N]\setminus\{i\}$ with $|\mT_i|=2$ satisfies $|\langle \bz_i,\bz_j \rangle| \ge |\langle \bz_i,\bz_k \rangle|$ for all $j \in \mT_i$ and $k \notin \mT_i$. Introducing $\mT_i$ is to ensure that each column of $\bA$ contains at least two non-zero elements.
For the implementation of the KSS method, we simply set $d_1=\dots=d_K=d$, where $d$ is given in Table \ref{table-4}. For all algorithms, we assume that $K$ is known and given in Table \ref{table-3}. We present the parameters of all the tested methods in Table \ref{table-4}. 

\begin{table}[!htbp]
\caption{Parameters setting of the tested methods in the experiments .}
\label{table-4}
\small
\begin{center}
\begin{tabular}{lcccccc}
\toprule
   & \emph{COIL20} & \emph{COIL100} & \emph{YaleB} & \emph{USPS} & \emph{MNIST}   \\
\midrule
KSS & $(d,\tau)=(10,0.98)$& $(d,\tau)=(10,0.98)$ & $(d,\tau)=(8,0.98)$ & $(d,\tau)=(9,0.99)$ & $(d,\tau)=(18,0.98)$   \\ 
SSC & $(\alpha,\rho)=(10,0.8)$ & $(\alpha,\rho)=(10,2)$ & $(\alpha,\rho)=(10,1)$ & $(\alpha,\rho)=(10,0.5)$ & $(\alpha,\rho)=(10,0.8)$ \\
TSC & $q=4$ & $q=3$ & $q=4$ & $q=5$ & $q=6$\\
GSC & $q=25$ & $q=15$ &  $q=20$ & $q=20$ & $q=20$\\
LRR & $\lambda=10^{-2}$ & $\lambda=10^{-3}$ & $\lambda=0.1$ & $\lambda=10^{-3}$ & $\lambda=10^{-2}$\\
LRSSC & $(\sigma,\lambda)=(0.2,0.5)$  & $(\sigma,\lambda)=(1, 2)$  & $(\sigma,\lambda) = (0.1,1)$ & $(\sigma,\lambda)=(10,1)$ & $(\sigma,\lambda) = (0.2,0.5)$ \\
OMP & $q = 2$ &  $q=2$ & $q=5$ & $q=25$ & $q=20$ \\
\bottomrule
\end{tabular}
\end{center}
\vskip -0.15in
\end{table}

To complement the result of recovery accuracy in Table \ref{table-2}, we also report the running time and clustering accuracy for all runs of each method in Table \ref{table-2-com}. 

\begin{table}[!htbp]
\caption{CPU times (in seconds) and the clustering accuracy of the tested methods on real datasets over 10 runs.}
\label{table-2-com}
\begin{center}
\begin{tabular}{lccccccc}
\toprule
 Accuracy & \emph{COIL20} & \emph{COIL100} & \emph{YaleB} & \emph{USPS} & \emph{MNIST}  \\
\midrule
KSS & {\bf 0.9187$\pm$0} & {\bf 0.8050$\pm$0.0040} & 0.6715$\pm$0.0253 & {\bf 0.8120$\pm$0.0164} & {\bf 0.8989$\pm$0.0796}\\ 
SSC & {\bf 0.9075$\pm$0.0164} & 0.6542$\pm$0.0165 & {\bf 0.8179$\pm$0.0074} &  0.6582$\pm$0.0002 & --\\
OMP & 0.5012$\pm$0.0168 & 0.3273$\pm$0.0083 & {\bf 0.7968$\pm$0.0216} & 0.1967$\pm$0.0071 & 0.5749$\pm$0 \\
TSC & 0.8271$\pm$0 & {\bf 0.7164$\pm$0.0093} & 0.4700$\pm$0.0092 & 0.6688$\pm$0.0002 & {\bf 0.8514$\pm$0}\\
GSC & 0.7896$\pm$0 & 0.6445$\pm$0.0084 & 0.6852$\pm$0.0135 & {\bf 0.9522$\pm$0.0001} & 0.5411$\pm$0.0427\\
LRR & 0.7161$\pm$0.0064 & 0.5403$\pm$0.0066 & 0.6534$\pm$0.0146 & 0.7129$\pm$0.0001 & --\\
LRSSC & 0.8194$\pm$0 & 0.5035$\pm$0.0101 & 0.6971$\pm$0.0097 & 0.6440$\pm$0.0005 & -- \\
\midrule
Time (s) & \emph{COIL20} & \emph{COIL100} & \emph{YaleB} & \emph{USPS} & \emph{MNIST}   \\
\midrule
KSS & 1.32$\pm$0.08 & 53.53$\pm$6.78 & 5.94$\pm$0.84 & {\bf 8.85$\pm$0.67} & {\bf 30.5287$\pm$13.15}\\ 
SSC & 55.37$\pm$4.99 & 912.25$\pm$42.12 & 136.36$\pm$13.64 & 1217.88$\pm$27.21 & -- \\
OMP & {\bf 0.62}$\pm$0.04 & {\bf 12.11$\pm$0.54} & {\bf 1.02$\pm$0.06} & 31.12$\pm$0.29 & 398.37$\pm$8.14\\
TSC & {\bf 0.66}$\pm$0.03 & {\bf 29.78$\pm$1.05} & {\bf 3.06$\pm$0.18} & {\bf 2.66$\pm$0.07} & {\bf 154.46$\pm$20.91}\\
GSC & 11.73$\pm$0.54 & 178.15$\pm$7.93 & 24.22$\pm$0.85 & 105.59$\pm$7.22 & 1800.00$\pm$0\\
LRR & 33.63$\pm$2.62 & 144.25$\pm$7.99 & 63.30$\pm$16.06 & 111.56$\pm$9.05 & -- \\
LRSSC & 73.31$\pm$3.45 & 1800.00$\pm$0 & 444.28$\pm$37.95 & 1800.00$\pm$0 & -- \\
\bottomrule
\multicolumn{6}{l}{\footnotesize ``--'' denotes out of memory.}
\end{tabular}
\end{center}
\vskip -0.15in
\end{table}

\end{appendix}

\end{document}